\def\invlim{\mathop{\vtop{\ialign{##\crcr$\hfill{\lim}\hfil$\crcr
\noalign{\kern1pt\nointerlineskip}\leftarrowfill\crcr\noalign
{\kern -3pt}}}}\limits}
\def\dirlim{\mathop{\vtop{\ialign{##\crcr$\hfill{\lim}\hfil$\crcr
\noalign{\kern1pt\nointerlineskip}\rightarrowfill\crcr\noalign
{\kern -3pt}}}}\limits}
\def\lomapr#1{\smash{\mathop{\relbar\joinrel\longrightarrow}\limits^{#1}}}
 \def\verylomapr#1{\smash{\mathop{\relbar\joinrel\relbar\joinrel\relbar\joinrel\longrightarrow}\limits^{#1}}}
\def\veryverylomapr#1{\smash{\mathop{\relbar\joinrel\relbar\joinrel\relbar
\joinrel\relbar\joinrel\relbar\joinrel\longrightarrow}\limits^{#1}}}
\def\phi{\varphi}
\def\epsilon{\varepsilon}
\newtheorem{theorem}{Theorem}[section]
 \newtheorem{lemma}[theorem]{Lemma}
 \newtheorem{proposition}[theorem]{Proposition}
 \newtheorem{corollary}[theorem]{Corollary}
\theoremstyle{definition}
\newtheorem{definition}[theorem]{Definition}
\newtheorem{remark}[theorem]{Remark}
\newtheorem{example}[theorem]{Example}
\newtheorem*{acknowledgments}{Acknowledgments}
 \newcommand{\Nis}{\operatorname{Nis}}
\newcommand{\ovk}{\overline{K} }
\newcommand{\rig}{\operatorname{rig} } 
\newcommand{\spb}{\operatorname{sp} }
\newcommand{\gp}{\operatorname{gp} }
 \newcommand{\cl}{\operatorname{cl} } 
 \newcommand{\dr}{\operatorname{dR} }
 \newcommand{\holim}{\operatorname{holim} }
 \newcommand{\syneet}{\operatorname{s\acute{e}t} }
\newcommand{\synee}{\operatorname{s\acute{e}} }
 \newcommand{\eet}{\operatorname{\acute{e}t} }
 \newcommand{\dlog}{\operatorname{dlog} }
 \newcommand{\Zar}{\operatorname{Zar} }
 \newcommand{\conv}{\operatorname{conv} }
 \newcommand{\Spec}{\operatorname{Spec} }
 \newcommand{\fil}{\operatorname{Fil} }
 \newcommand{\Spf}{\operatorname{Spf} }
 \newcommand{\Hom}{\operatorname{Hom} }
 \newcommand{\tr}{ \operatorname{tr} }
 \newcommand{\id}{ \operatorname{Id} }
\newcommand{\synt}{ \operatorname{syn} }
 \newcommand{\res}{ \operatorname{res} }
 \newcommand{\Cone}{\operatorname{Cone} }
  \newcommand{\hk}{\operatorname{HK} }
 \newcommand{\kker}{\operatorname{Ker} }
 \newcommand{\crr}{\operatorname{cr} }
 \newcommand{\ve}{ \varepsilon  }
  \newcommand{\kr}{^{\scriptscriptstyle\bullet}}
 \newcommand{\sff}{{\mathcal{F}}}  
 \newcommand{\sy}{{\mathcal{Y}}}
 \newcommand{\sh}{{\mathcal{H}}}
 \newcommand{\sg}{{\mathcal{G}}}
 \newcommand{\scc}{{\mathcal{C}}}
 \newcommand{\so}{{\mathcal O}}
 \newcommand{\sj}{{\mathcal J}}
 \newcommand{\se}{{\mathcal{E}}}
 \newcommand{\sa}{{\mathcal{A}}}
 \newcommand{\szz}{{\mathcal{Z}}}
 \newcommand{\sx}{{\mathcal{X}}}
 \newcommand{\sss}{{\mathcal{S}}}
 \newcommand{\wh}{\widehat}
\newcommand{\Q}{\mathbf{Q}}
\newcommand{\Z}{\mathbf{Z}}
\newcommand{\N}{\mathbf{N}}
\newcommand{\R}{{\mathrm R}}
\newcommand{\M}{{\mathrm M}}
\newcommand{\F}{{\mathrm F}}
\numberwithin{equation}{section}
\begin{document}
 \title[Syntomic cohomology and $p$-adic motivic cohomology]
 {Syntomic cohomology and $p$-adic motivic cohomology}
 \author{Veronika Ertl}
 \address{Universit\"at Regensburg, Fakult\"at f\"ur Mathematik
Universit\"atsstrasse 31,
93053 Regensburg, Germany}
\email{veronika.ertl@mathematik.uni-regensburg.de}  
\author{Wies{\l}awa Nizio{\l}}
 \address{CNRS, UMPA, \'Ecole Normale Sup\'erieure de Lyon, 46 all\'ee d'Italie, 69007 Lyon, France}
\email{wieslawa.niziol@ens-lyon.fr} 
\date{\today}
\thanks{The second author's research was supported in part by   the grant ANR-14-CE25.}
 \maketitle 
 \begin{abstract} We  prove a mixed characteristic analog of the Beilinson-Lichtenbaum Conjecture for $p$-adic motivic cohomology.    It gives a description, in the stable range,  of $p$-adic motivic cohomology (defined using algebraic cycles) in terms of differential forms. This generalizes a result of Geisser  \cite{GD} from small Tate twists to all twists and uses as a critical new ingredient  the comparison theorem between syntomic complexes and $p$-adic nearby cycles proved recently in \cite{CN}.
    \end{abstract}
 \tableofcontents
 \section{Introduction} 
 For a smooth variety over a field of characteristic zero, the Beilinson-Lichtenbaum Conjecture states that, in a certain stable range,  the $p$-adic motivic cohomology is equal to the \'etale cohomology:
 $$
 H^i_{\M}(X,\Z/p^n(r))\stackrel{\sim}{\to} H^i_{\eet}(X,\Z/p^n(r)), \quad i\leq r.
 $$
 Here motivic cohomology is defined as the hypercohomology of  the   Bloch's cycle complex $\Z/p^n(r)_{\M}$. This conjecture  follows \cite{SV} from the Bloch-Kato Conjecture that was proved by Voevodsky and Rost \cite{Wei}.

 For a smooth variety over a field of positive characteristic $p$, the analog of the Beilinson-Lichtenbaum Conjecture states that, in the same stable range,  the $p$-adic motivic cohomology is equal to the logarithmic de Rham-Witt cohomology:
 $$
  H^i_{\M}(X,\Z/p^n(r))\stackrel{\sim}{\to} H^{i-r}_{\eet}(X,W_n\Omega^{r}_{X,\log}).
 $$
 It was proved by Geisser-Levine \cite{GL}.
 
 The purpose of this note is to prove a mixed characteristic analog of the Beilinson-Lichtenbaum Conjecture for $p$-adic motivic cohomology. 
 Let $X$ be a semistable  scheme over $\so_K$ -- a complete discrete valuation ring with fraction field
$K$  of characteristic 0 and with perfect
residue field $k$ of  characteristic $p$. We fix a uniformizer $\varpi$ of $K$. Let $F$ be the fraction field of the ring of Witt vectors $W(k)$. We assume that the special fiber $X_0$ of $X$ is smooth and treat $X$ as a log-scheme. We show that, in the same stable range as above, the $p$-adic motivic cohomology   of $X_{\tr}$ --  the open set where  the log-structure  is trivial -- is equal to  the (logarithmic)  syntomic-\'etale cohomology of $X$. This relates algebraic cycles to differential forms.

\begin{corollary} We have the following natural isomorphism\footnote{For a smooth scheme $Y$, we set  $H^*_{\M}(Y ,\Q_p(r)):=H^*\holim_n\R \Gamma(Y_{\Zar},\Z/p^n(r)_M)\otimes\Q$. }
 $$
 H^i_{\M}(X_{\tr},\Q_p(r))\stackrel{\sim}{\to} H^{i}_{\eet}(X,\se(r))_{\Q}, \quad i\leq r,
 $$
 where $\se(\cdot)$ denotes the  syntomic-\'etale cohomology complex.
 If $X$ is proper, this yields  the following natural  isomorphism
 $$
  H^i_{\M}(X_{\tr},\Q_p(r))\stackrel{\sim}{\to} H^{i}_{\eet}(X,\sss(r))_{\Q}, \quad i\leq r,
  $$
  where $\sss(\cdot)$ denotes the  syntomic cohomology complex.
   \end{corollary}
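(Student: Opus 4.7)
The strategy is to reduce the corollary to two inputs: the classical Beilinson--Lichtenbaum theorem applied to the generic fibre $X_{\tr}$, and the rational comparison between syntomic-\'etale cohomology of the log-scheme $X$ and $p$-adic \'etale cohomology of $X_{\tr}$ in the stable range $i\leq r$, which is the main theorem of this paper (resting on the syntomic/nearby-cycles comparison of \cite{CN}).

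For the first input, since $X_{\tr}$ is smooth over the characteristic zero field $K$, the Beilinson--Lichtenbaum theorem \cite{SV,Wei} gives, for $i\leq r$,
$$H^i_{\M}(X_{\tr},\Z/p^n(r))\stackrel{\sim}{\to} H^i_{\eet}(X_{\tr},\Z/p^n(r)).$$
Taking the homotopy limit over $n$ in the Zariski topology and tensoring with $\Q$ produces the rational statement
$$H^i_{\M}(X_{\tr},\Q_p(r))\stackrel{\sim}{\to} H^i_{\eet}(X_{\tr},\Q_p(r)),\quad i\leq r,$$
where one uses boundedness of the cohomology in this range to commute $\holim$ past truncation. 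For the second input I would invoke the paper's rationalized comparison: the sheaf $\se(r)_{\Q}$ on $X_{\eet}$ agrees with $\tau_{\leq r}\R j_{*}\Q_p(r)$, where $j\colon X_{\tr}\hookrightarrow X$ is the open immersion. Applied to global sections this reads
$$H^i_{\eet}(X,\se(r))_{\Q}\stackrel{\sim}{\to} H^i_{\eet}(X_{\tr},\Q_p(r)),\quad i\leq r,$$
which, chained with the previous display, yields the first asserted isomorphism.

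For the proper case one needs to replace $\se(r)$ by $\sss(r)$. When $X/\so_K$ is proper, the natural map $\R\Gamma_{\eet}(X,\sss(r))_{\Q}\to\R\Gamma_{\eet}(X,\se(r))_{\Q}$ should be a quasi-isomorphism, since the overconvergent/convergent Hyodo--Kato and log-crystalline ingredients that distinguish the two definitions coincide on proper log-smooth families. Substituting this into the first isomorphism gives the second assertion. The main obstacle is clearly the second input: proving the sheafified quasi-isomorphism $\se(r)_{\Q}\simeq \tau_{\leq r}\R j_{*}\Q_p(r)$ on $X_{\eet}$. Stalk-by-stalk this is essentially the content of \cite{CN}, but promoting it to an equivalence of \'etale complexes on the log-scheme $X$ — and checking that the locally defined Fontaine--Messing style period maps glue coherently — is the nontrivial global work, and the bound $i\leq r$ is intrinsic here, being the precise range in which the period map becomes a quasi-isomorphism. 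Once that is in hand, the corollary is formal.
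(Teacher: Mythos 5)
Your argument for the first isomorphism is essentially a valid shortcut around the paper's machinery. The paper derives the corollary from the cycle class map of Theorem 1.2, which is constructed by matching the motivic localization triangle against the triangle (1.2) built from the syntomic-\'etale period map; your route instead composes the Beilinson--Lichtenbaum isomorphism $H^i_{\M}(X_{\tr},\Q_p(r))\simeq H^i_{\eet}(X_{\tr},\Q_p(r))$ with (the rationalized hypercohomology of) the period map $\alpha_r$ of Theorem~\ref{keylemma11}(2). Both routes rest on the same key input from \cite{CN}, and your composed map agrees with the paper's cycle class map because the latter is constructed precisely to make the diagram with $\alpha_r$ and $\cl^{\eet}_r$ commute. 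One small imprecision: Theorem~\ref{keylemma11}(2) gives $\tau_{\leq r}\se(r)_{\Q}\simeq\tau_{\leq r}\R j_*\Q_p(r)$, not a quasi-isomorphism of the untruncated complexes (the syntomic-\'etale complex is not a priori concentrated in degrees $\leq r$); this is harmless for $H^i$ with $i\leq r$, but you should say so explicitly, as you in fact need the standard spectral-sequence observation that $H^i$ of a complex of sheaves concentrated in nonnegative degrees depends only on $\tau_{\leq i}$ of that complex.

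Your treatment of the proper case, however, rests on a wrong reason. The passage from $\se(r)$ to $\sss(r)$ has nothing to do with Hyodo--Kato or convergent versus log-crystalline ingredients: those distinctions belong to the Appendix (comparing crystalline, convergent and rigid \emph{syntomic} cohomologies) and never enter the definition of the syntomic-\'etale complex. What actually distinguishes $\se_n(r)$ from $i_*\sss_n(r)$ is the Fontaine--Messing gluing with the \'etale complex on the generic fiber; concretely, there is the distinguished triangle
\[
j_{\eet!}\,\R j'_*\Z/p^n(r)'\longrightarrow \se_n(r)\longrightarrow i_*\sss_n(r)
\]
of (2.5). For $X$ proper over the henselian ring $\so_K$, proper base change for torsion \'etale sheaves gives $\R\Gamma(X_{\eet},-)\simeq\R\Gamma(X_{0,\eet},i^*(-))$, and since $i^*j_{\eet!}=0$, the first term has vanishing hypercohomology; hence $\R\Gamma(X_{\eet},\se_n(r))\simeq\R\Gamma(X_{\eet},i_*\sss_n(r))=\R\Gamma(X_{0,\eet},\sss_n(r))$, which after $\holim_n$ and rationalization yields $H^i_{\eet}(X,\se(r))_{\Q}\simeq H^i_{\eet}(X,\sss(r))_{\Q}$. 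This is the generalization of the paper's Example~\ref{dim0}, where the same vanishing for $W(k)$ is attributed to henselianness. Replace your heuristic with this proper base change argument and the proof is complete.
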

The rational syntomic cohomology  $H^{*}_{\eet}(X,\sss(r))_{\Q}$ above is that defined in \cite{FM} as filtered Frobenius eigenspace of  crystalline cohomology\footnote{It differs from the one defined in \cite{NN} by the absence of log-structure associated to the special fiber.}. We show in the appendix
that it is isomorphic to the logarithmic version of  the  convergent syntomic cohomology  defined in  \cite{N1} as well as to the  rigid syntomic cohomology defined in  \cite{Be1, Gr}. 

  The above corollary  is a simple consequence of the following theorem which is the main result of this paper.
\begin{theorem}
\label{main0}Let $r\geq 0$. Let $j^{\prime}_*:X_{\tr}\to X$ be the natural open immersion. 
Then there are natural cycle class maps between complexes of sheaves on the Nisnevich site of $X$ and $X_0$, respectively, 
$$
\cl^{\synt}_r:\R j^{\prime}_*\Z/p^n(r)_{\M}\to \se^{\prime}_n(r)_{\Nis},\quad \cl^{\synt}_r:i^* \R j^{\prime}_*\Z/p^n(r)_{\M}\to \sss^{\prime}_n(r)_{\Nis},
$$
where $i:X_0\hookrightarrow X$ is the special fiber of $X$. They are compatible with the \'etale cycle class maps and 
are $p^N$-quasi-isomorphisms, i.e., 
 the kernels and cokernels of the maps induced on the cohomology sheaves are annihilated by $p^N$ for a constant   
$N=N(e,p,r)$, which depends on the absolute ramification index $e$ of $K$, $r$, but not on $X$ or $n$\footnote{If $K$ has enough roots of unity then  $N={N^{\prime}r}$ for a universal constant $N^{\prime}$ {\rm (not depending on $p$, $X$, $K$, $n$ or $r$)}. See Section (2.1.1) of \cite{CN} for what it means for a field to contain enough roots of unity. The field $F$ contains enough roots of unity and for any $K$, the field $K(\zeta_{p^n})$, for $n\geq c(K)+3$, where $c(K)$ is  the conductor of $K$, contains enough roots of unity. }.
\end{theorem}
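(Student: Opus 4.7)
The plan is to assemble $\cl^{\synt}_r$ from the classical étale cycle class map on the generic fiber and the Fontaine--Messing period map relating $p$-adic étale cohomology to syntomic cohomology, and then to verify the $p^N$-quasi-isomorphism stalk-locally on the Nisnevich site: on $X_K$ via Beilinson--Lichtenbaum, on $X_0$ via the comparison between syntomic complexes and $p$-adic nearby cycles of \cite{CN}.

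First I would construct the map. On $X_{K,\Nis}$ the standard étale cycle class map for Bloch's cycle complex gives $\Z/p^n(r)_{\M}\to\R\epsilon_*\mu_{p^n}^{\otimes r}$, where $\epsilon\colon X_{K,\eet}\to X_{K,\Nis}$ is the change of topology. Applying $\R j'_*$ and commuting with $\R\epsilon_*$ across the commutative square of sites produces on $X_{\Nis}$ a morphism
$$
\R j'_*\Z/p^n(r)_{\M}\longrightarrow\R(\epsilon_X)_*\R j'_{\eet,*}\mu_{p^n}^{\otimes r}.
$$
By construction, the syntomic-étale complex $\se'_n(r)_{\Nis}$ receives a natural comparison morphism from $\R(\epsilon_X)_*\R j'_{\eet,*}\mu_{p^n}^{\otimes r}$ via the Fontaine--Messing period map, and this comparison is an equivalence over $X_K$; composing gives $\cl^{\synt}_r$. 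The analogous map into $\sss'_n(r)_{\Nis}$ on $X_0$ is obtained by applying $i^*$. Compatibility with the étale cycle class maps is then immediate from the construction.

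Next I would verify the $p^N$-quasi-isomorphism on Nisnevich stalks. At a geometric point $\bar x\in X_K$, the map reduces to the Beilinson--Lichtenbaum quasi-isomorphism $\Z/p^n(r)_{\M}\stackrel{\sim}{\to}\tau_{\leq r}\R\epsilon_*\mu_{p^n}^{\otimes r}$ of Voevodsky--Rost \cite{Wei}, together with the fact that over $X_K$ the syntomic-étale complex coincides with $\R\epsilon_*\mu_{p^n}^{\otimes r}$; no $p^N$ is needed at these stalks. At $\bar x\in X_0$, Beilinson--Lichtenbaum applied to the essentially smooth $K$-scheme $\Spec(\so^h_{X,\bar x}[1/p])$ identifies the left-hand Nisnevich stalk in degrees $\leq r$ with the stalk of $p$-adic nearby cycles $(i^*\R j'_{\eet,*}\mu_{p^n}^{\otimes r})_{\bar x}$. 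The right-hand stalk is the Nisnevich-local syntomic complex, and the main theorem of \cite{CN} provides the required $p^N$-quasi-isomorphism between these, with $N=N(e,p,r)$ independent of $X$ and $n$.

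The main obstacles will be twofold. First, producing a genuinely global morphism of complexes on $X_{\Nis}$ (not merely a coherent system of stalk-level maps) that is visibly compatible with the étale cycle class map demands careful derived-category bookkeeping around $\R j'_*$, $\R\epsilon_*$, and the Fontaine--Messing map, and likely proceeds via explicit models of $\Z/p^n(r)_{\M}$ and $\se'_n(r)_{\Nis}$. Second, transporting the constant $N$ of \cite{CN} from the étale site of $X_0$ to Nisnevich stalks of $X$ without inflating $p$-power torsion relies on the bounded cohomological dimension of strict henselizations together with the precise shape of the \cite{CN} estimate. A secondary subtlety is the extension of Beilinson--Lichtenbaum from finite-type smooth $K$-schemes to essentially smooth ones, which is a standard but nontrivial filtered-colimit argument.
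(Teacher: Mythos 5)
There is a genuine gap in the construction of the map, and it reflects a structural feature of the paper's proof that your proposal misses entirely. You propose to build $\cl^{\synt}_r$ by applying $\R j'_*$ to the étale cycle class map and then composing with ``a natural comparison morphism from $\R\epsilon_*\R j'_{\eet,*}\mu_{p^n}^{\otimes r}$ to $\se'_n(r)_{\Nis}$ via the Fontaine--Messing period map.'' But the period map goes the other way: $\alpha_r\colon\se'_n(r)\to\R j_*\Z/p^n(r)'$ (syntomic-étale to pushforward of étale), and there is no natural map in the direction you want. Since $\alpha_r$ is only a $p^N$-quasi-isomorphism and not a quasi-isomorphism, you cannot simply invert it and compose; any ``inversion up to $p^N$'' would not yield a canonical morphism of sheaf complexes and would make the naturality and compatibility claims unverifiable. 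The paper does not run into this because it never tries to invert $\alpha_r$: instead it produces $\cl^{\synt}_r$ as the unique filler of a map of two distinguished triangles, with uniqueness coming from the vanishing of $\Hom(\se^{\prime}_n(r)_{X,\Nis},W_n\Omega^{r-1}_{X_0,\log}[-r-1])$ for degree reasons.

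The deeper omission is that the boundary term $W_n\Omega^{r-1}_{X_0,\log}[-r]$ never appears in your argument, yet it is the hinge of the whole proof. On the syntomic side one needs Theorem~\ref{log-nonlog}: adding log-structure along the special fiber changes $\se'_n(r)$ by exactly this logarithmic de Rham--Witt term, which (after the period map) expresses $\se'_n(r)_{X,\Nis}$ as the fiber of a symbol map $\kappa\colon\tau_{\leq r}\R j_*\R\varepsilon_*\Z/p^n(r)'\to i_*W_n\Omega^{r-1}_{X_0,\log}[-r]$. On the motivic side the corresponding fiber is produced by the localization triangle $\Z/p^n(r)_{\M,X}\to j_*\Z/p^n(r)_{\M,X_K}\to i_*\Z/p^n(r-1)_{\M,X_0}[-1]$, with the two outer legs identified by Beilinson--Lichtenbaum and Geisser--Levine respectively, and one must verify that the outer square actually commutes (a nontrivial symbol computation carried out on the étale site). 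Your stalk-by-stalk check on $X_0$ also elides this: the Nisnevich stalk of $\R j'_*\Z/p^n(r)_{\M}$ at a point of the special fiber is not directly identified with $(i^*\R j_*\mu_{p^n}^{\otimes r})_{\bar x}$ by Beilinson--Lichtenbaum alone; what is available is the localization sequence relating motivic cohomology of $\so^h_{X,\bar x}$, of $\so^h_{X,\bar x}[1/p]$, and of the residue field, and the Geisser--Levine identification of the latter with $W_n\Omega^{r-1}_{\log}$. Without the explicit comparison of triangles, neither the map nor the $p^N$-bound can be established.
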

The syntomic-\'etale cohomology $\se^{\prime}_n(r)$  was defined by Fontaine-Messing \cite{FM} by gluing syntomic cohomology $\sss^{\prime}_n(r)$ on $X_0$ 
with \'etale cohomology on the generic fiber via the relative fundamental exact sequence of $p$-adic Hodge Theory. It is a complex of sheaves on the \'etale site of $X$. We extend this definition to logarithmic schemes (where one replaces syntomic cohomology by logarithmic syntomic cohomology). The Nisnevich version that appears in the above theorem is defined by projecting to the Nisnevich site and truncating at $r$:
$$\se^{\prime}_n(r)_{\Nis}:=\tau_{\leq r}\R\varepsilon_*\se^{\prime}_n(r),\quad \sss^{\prime}_n(r)_{\Nis}:=\tau_{\leq r}\R\varepsilon_*\sss^{\prime}_n(r),
$$
  where $\varepsilon: X_{\eet}\to X_{\Nis}$ is the natural projection. 

  The syntomic part of the above theorem (hence of the above corollary as well), for twists $r\leq p-2$ (where no constants are needed) was proved by Geisser\footnote{Geisser's result was conditional on the Bloch-Kato Conjecture which at the time of the publication of  his paper was not a theorem yet.} \cite[Theorem 1.3]{GD}. The key ingredient in his proof is the exact sequence of Kurihara
  \cite{Kur} that links syntomic cohomology with $p$-adic nearby cycles coupled with the Beilinson-Lichtenbaum Conjecture over fields of characteristic zero and $p$.
  Our proof of Theorem~\ref{main0} proceeds in a similar manner using as the main new  ingredient  the relation between syntomic complexes and $p$-adic nearby cycles proved recently in \cite{CN}. 
  
    We will now describe it briefly in the case when there is no horizontal log-structure.
  First, we show that we have the $p^{Nr}$-distinguished triangle (on the \'etale site of $X_0$), for a universal constant $N$,
  \begin{equation}
\label{seq0}
\se^{\prime}_n(r)_X\to \se^{\prime}_n(r)_{X^{\times}}\to W_n\Omega^{r-1}_{X_0,\log}[-r],
\end{equation}
where $W_n\Omega^{r-1}_{X_0,\log}[-r]$ denotes the logarithmic de Rham-Witt sheaf and $X^{\times}$ denotes the scheme $X$ with added  log-structure coming  from the special fiber. 
The syntomic-\'etale cohomology $\se^{\prime}_n(r)_{X^{\times}}$ comes equipped with a period map
  $$\alpha_r: \se^{\prime}_n(r)_{X^{\times}}\to \R j_*\Z/p^n(r)^{\prime}_{X_K},
    $$
    where $j_*:X_K\hookrightarrow X$ and $\Z/p^n(r)^{\prime}=(p^aa!)^{-1}{\mathbf Z}/p^n(r)$ for $r=(p-1)a+b,a,b\in{\mathbf Z}, 0\leq b < p-1$ . 
    Projecting it to the Nisnevich site and truncating at $r$ we obtain the Nisnevich syntomic-\'etale period map
    $$\alpha_r: \se^{\prime}_n(r)_{{X^{\times}},\Nis}\to \tau_{\leq r}\R \varepsilon_* \R j_*\Z/p^n(r)^{\prime}_{X_K}.
        $$
      
        The computations of $p$-adic nearby cycles via syntomic cohomology from \cite{CN} imply  that this is a $p^N$-quasi-isomorphism, for a constant $N$ as in the theorem. Hence, from (\ref{seq0}),  we obtain the $p^N$-distinguished triangle
   \begin{equation}
   \label{seq1}
        \se^{\prime}_n(r)_{{X},\Nis}\lomapr{\alpha_r} \tau_{\leq r}\R j_* \tau_{\leq r}\R \varepsilon_* \Z/p^n(r)^{\prime}_{X_K}\to i_*W_n\Omega^{r-1}_{X_0,\log}[-r].
     \end{equation}
  
     Next, we note that the localization sequence in motivic cohomology yields the following distinguished triangle (on the Nisnevich site of $X$)
     $$
     \Z/p^n(r)_{\M}\to j_*\Z/p^n(r)_{\M}\to i_*\Z/p^n(r-1)_{\M}[-1].
     $$     
     By the Beilinson-Lichtenbaum Conjecture and the computations of Geisser-Levine \cite{GL} of motivic cohomology in characteristic $p$, we have the cycle class map quasi-isomorphisms
     $$
     \Z/p^n(r)_{\M}\stackrel{\sim}{\to} \tau_{\leq r}\R\varepsilon_*\Z/p^n(r)_{X_K},\quad      \Z/p^n(r)_{\M}\stackrel{\sim}{\to}      W_n\Omega^{r}_{X_0,\log}[-r] . 
        $$
        The above triangle  becomes
       \begin{equation}
       \label{seq2}
 \Z/p^n(r)_{\M} \to  j_*\tau_{\leq r}\R\varepsilon_*\Z/p^n(r)_{X_K} \to i_*W_n\Omega^{r-1}_{X_0,\log}[-r]        
 \end{equation}
  Since $ j_*\Z/p^n(r)_{\M}\stackrel{\sim}{\to} \R j_*\Z/p^n(r)_{\M}$,  $\tau_{\leq r}\Z/p^n(r)_{\M}\stackrel{\sim}{\to} \Z/p^n(r)_{\M}$,  the cycle class map of Theorem \ref{main0} can now be  obtained by comparing sequences (\ref{seq1}) and (\ref{seq2}).

\begin{acknowledgments} We would like to thank Grzegorz Banaszak, Pierre Colmez, and Bruno Kahn for many discussions related to the content of this paper. 
 \end{acknowledgments}
 \subsubsection{Notation and Conventions}
 We assume all the schemes   to be locally noetherian. We work in the category of fine log-schemes. 
\begin{definition}
\label{1saint}
Let $N\in {\mathbf N}$. For a morphism $f: M\to M^{\prime}$ of ${\mathbf Z}_p$-modules, we say that $f$ is 
{\it $p^N$-injective} (resp. {\it $p^N$-surjective}) if its kernel (resp. its cockernel) is annihilated by $p^N$ 
and we say that $f$ is {\it $p^N$-isomorphism} if it is $p^N$-injective and $p^N$-surjective. 
We define in the same way the notion of {\it $p^N$-distinguished triangle} or {\it $p^N$-acyclic complex} 
(a complex whose cohomology groups are annihilated by $p^N$) as well as the notion of {\it $p^N$-quasi-isomorphism}
 (map in the derived category that induces a $p^N$-isomorphism on cohomology). 
\end{definition}
We will use a shorthand for certain homotopy limits. Namely,  if $f:C\to C'$ is a map  in the dg derived category of abelian groups, we set
$$[\xymatrix{C\ar[r]^f&C'}]:=\holim(C\to C^{\prime}\leftarrow 0).$$ 
And we set
$$
\left[\begin{aligned}
\xymatrix{C_1\ar[d]\ar[r]^f & C_2\ar[d]\\
C_3\ar[r]^g & C_4
}\end{aligned}\right]
:=[[C_1\stackrel{f}{\to} C_2]\to [C_3\stackrel{g}{\to} C_4]],
$$ 
for 
 a commutative diagram (the one inside the large bracket) in the dg derived category of abelian groups.
\section{Syntomic cohomology}Let $\so_K$ be a complete discrete valuation ring with fraction field
$K$  of characteristic 0 and with perfect
residue field $k$ of characteristic $p$. Let $\varpi$ be a uniformizer of $\so_K$; we will keep it fixed throughout the paper\footnote{This is necessary to fix an embedding of $\Spec(\so_K)$ into a smooth scheme over $\Z_p$.}. Let
$W(k)$ be the ring of Witt vectors of $k$ with 
 fraction field $F$ (i.e, $W(k)=\so_F$); let $e$ be the ramification index of $K$ over $F$.  Let $\sigma=\phi$ be the absolute
Frobenius on $W(\overline {k})$. 
For a $\so_K$-scheme $X$, let $X_0$ denote
the special fiber of $X$ and let $X_n$ denote the reduction modulo $p^n$ of $X$. We will denote by $\so_K$, 
${\so_K}^{\times}$, and ${\so_K}^0$ the scheme $\Spec ({\so_K})$ with the trivial, canonical
(i.e., associated to the closed point), and $({\mathbf N}\to {\so_K}, 1\mapsto 0)$ 
log-structure respectively. 
 
  In this section we will briefly review the definitions of syntomic and syntomic-\'etale cohomologies and their basic properties.
 We refer the reader for details to \cite[2]{Ts}.
\subsection{Syntomic cohomology}
 For a log-scheme $X$ we denote by $X_{\synt}$ the small syntomic site of $X$. It is 
 built from log-syntomic morphisms $f:Y\to Z$ in the sense of Kato \cite[2.5]{K1} (see also \cite[6.1]{BM}), i.e., 
the morphism  $f$ is integral, the underlying morphism of schemes is flat and locally of finite presentation, 
and, \'etale locally on $Y$, there is a factorization $Y\stackrel{i}{\hookrightarrow}W\stackrel{h}{\rightarrow}Z$ 
where $h$ is log-smooth and $i$ is an exact closed immersion that is transversally regular over $Z$.

 For a log-scheme $X$ log-syntomic over $\Spec(W(k))$, define 
$$
\so^{\crr}_n(X) =H^0_{\crr}(X_n,\so_{X_n}),\qquad 
\sj_n^{[r]}(X) =H^0_{\crr}(X_n,\sj^{[r]}_{X_n}),
$$
where $\so_{X_n}$ is the structure sheaf of the absolute crystalline site (i.e., over $W_n(k)$), 
$\sj_{X_n}=\kker(\so_{X_n/W_n(k)}\to \so_{X_n})$, and $\sj^{[r]}_{X_n}$ is its $r$'th divided power of $\sj_{X_n}$.
Set $\sj^{[r]}_{X_n}=\so_{X_n}$ if $r\leq 0$.
We know \cite[II.1.3]{FM} that the presheaves $\sj_n^{[r]} $ are sheaves on $X_{n,\synt}$, flat over ${\mathbf Z}/p^n$, and that
$\sj^{[r]}_{n+1}\otimes{\mathbf Z}/p^n\simeq  \sj^{[r]}_{n}$.  There is a  natural, compatible with Frobenius,  and 
functorial isomorphism 
$$
H^*(X_{\synt},\sj_n^{[r]})\simeq H^*_{\crr}(X_n,\sj^{[r]}_{X_n}).
$$
It is easy to see that $\phi(\sj_n^{[r]} )\subset p^r\so^{\crr}_n$ for $0\leq r\leq p-1$. This fails in general and we modify
$\sj_n^{[r]}$:  $$\sj_n^{<r>}:= \{x\in \sj_{n+s}^{[r]}\mid \phi(x)\in p^r\so^{\crr}_{n+s}\}/p^n ,$$
for some $s\geq r$.
This definition is independent of $s$. We check that $\sj_n^{<r>}$  is flat over ${\mathbf Z}/p^n$ and
 $\sj^{<r>}_{n+1}\otimes{\mathbf Z}/p^n\simeq  \sj^{<r>}_{n}$.
This allows us to define the divided  Frobenius $\phi_r="\phi/p^r": \sj_n^{<r>} \to \so^{\crr}_n$.

   Set $$\sss_n(r):=\Cone(\sj_n^{<r>} \stackrel{1-\phi_r}{\longrightarrow}\so^{\crr}_n)[-1].$$
Since the following sequence is exact
$$
\begin{CD}
0@>>> \sss_n(r)@>>> \sj_n^{<r>} @>1-\phi_r >>\so^{\crr}_n @>>> 0,
\end{CD}
$$
we  actually have 
$$\sss_n(r):=\kker(\sj_n^{<r>} \stackrel{1-\phi_r}{\longrightarrow}\so^{\crr}_n).$$
    In the same way we can define syntomic sheaves $\sss_n(r)$ on $X_{m,\synt}$ for $m\geq n$. 
Abusing notation, we set $\sss_n(r)=i_*\sss_n(r)$ for the natural map $i: X_{m,\synt}\to X_{\synt}$. Since $i_*$ is exact, $H^*(X_{m,\synt},\sss_n(r))=H^*(X_{\synt},\sss_n(r))$.
Because of that we will write $\sss_n(r)$ for the syntomic sheaves on $X_{m,\synt}$ as well as on $X_{\synt}$. We will also need the "undivided" version of syntomic complexes of sheaves:
$$\sss'_n(r):=\Cone(\sj_n^{[r]} \stackrel{p^r-\phi}{\longrightarrow}\so^{\crr}_n)[-1].$$
For $r,i\geq 0$, we have the long exact sequences
\begin{align}
\label{exact} \rightarrow
H^i(X_{\eet},\sss_n(r)) &  \rightarrow
H^i_{\crr}(X_{n},J^{<r>}_{X_{n}}) \stackrel{
1-\phi_r}{\longrightarrow}  H^i_{\crr}(X_{n},
\so_{X_{n}})
\rightarrow\\
 \rightarrow
H^i(X_{\eet},\sss^{\prime}_n(r)) &  \rightarrow
H^i_{\crr}(X_{n},J^{[r]}_{X_{n}}) \stackrel{
p^r-\phi}{\longrightarrow}  H^i_{\crr}(X_{n},
\so_{X_{n}})
\rightarrow\notag
\end{align}
\begin{proposition}(\cite[Prop. 3.12]{CN})
For $X$ a fine and saturated log-smooth log-scheme over $\so_K$ and $0\leq r\leq p-2$, the natural map of complexes of sheaves on the \'etale site of $X_0$  
    $$
    \tau_{\leq r}\sss_n(r)\to \sss_n(r)
$$
is a quasi-isomorphism.
For $X$ semistable over $\so_K$\footnote{A scheme $X$ over $\so_K$ is called semistable if it is surjective on $\Spec \so_K$, regular, and there is a distinguished divisor "at infinity" $D_{\infty}$ which  is a strict relative normal crossing divisor and which together with the special fiber forms a strict normal crossing divisor.} and $r\geq 0$, the  natural map of complexes of sheaves on the \'etale site of $X_0$  
    $$
    \tau_{\leq r}\sss^{\prime}_n(r)\to \sss^{\prime}_n(r)
$$
is  a $p^{Nr}$-quasi-isomorphism for a universal constant $N$. 
\end{proposition}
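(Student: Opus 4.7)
The plan is to analyse the cohomology sheaves of $\sss_n(r)$ (and $\sss^{\prime}_n(r)$) as complexes on $X_{0,\eet}$ by exploiting the defining exact sequence
$$0\to \sss_n(r)\to \sj_n^{<r>}\stackrel{1-\phi_r}{\to} \so^{\crr}_n\to 0$$
of sheaves on the syntomic site (together with its undivided analog involving $\sj_n^{[r]}$ and $p^r-\phi$), pushed forward along $\epsilon\colon X_{0,\synt}\to X_{0,\eet}$. Since the two terms on the right are concentrated in degree zero on the syntomic site, the induced long exact sequence of étale cohomology sheaves expresses $\mathcal{H}^i(\sss_n(r))$ in terms of $\R^{i-1}\epsilon_*\so^{\crr}_n$, $\R^i\epsilon_*\sj_n^{<r>}$ and the operator induced by $1-\phi_r$. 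The desired vanishing (resp.\ $p^{Nr}$-annihilation) in degrees $>r$ therefore reduces to the statement that $1-\phi_r$ (resp.\ $p^r-\phi$) is an isomorphism (resp.\ a $p^{Nr}$-isomorphism) on $\R^i\epsilon_*\sj_n^{<r>}\to \R^i\epsilon_*\so^{\crr}_n$ for $i\geq r+1$, plus the appropriate injectivity statement on the kernel side.

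For the local computation, choose étale locally a closed immersion of $X$ into a (log-)smooth $W(k)$-scheme $\mathfrak{X}$ and identify the higher direct images $\R^i\epsilon_*\so^{\crr}_n$ with the cohomology sheaves of the de Rham complex $D_n\otimes\Omega^{\bullet}_{\mathfrak{X}/W}$, where $D_n$ is the divided power envelope of $X_n$ in $\mathfrak{X}_n$. Under this identification $\R\epsilon_*\sj_n^{<r>}$ corresponds to a Hodge-type filtered subcomplex whose degree-$i$ term is essentially $\sj^{[r-i]}D_n\otimes \Omega^i$ for $i\leq r$ and $D_n\otimes\Omega^i$ for $i\geq r$, with divided-power corrections tracking the passage from $\sj_n^{[r]}$ to $\sj_n^{<r>}$. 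The key computational input is that Frobenius satisfies $\phi(\Omega^i)\subset p^i\Omega^i$ modulo higher-order terms, so $\phi_r=\phi/p^r$ sends the degree-$i$ term of the filtered de Rham complex into $p^{\max(i-r,0)}$ times the corresponding de Rham piece. In every degree $i\geq r+1$ the operator $\phi_r$ is therefore topologically nilpotent modulo $p^n$, and the Neumann series $\sum_{k\geq 0}\phi_r^k$ converges to an honest inverse of $1-\phi_r$. This settles the log-smooth case $0\leq r\leq p-2$, where the factorial denominators in the divided powers do not enter.

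For the semistable $\sss^{\prime}_n(r)$ with arbitrary $r\geq 0$, the same strategy applies after multiplying by a suitable power of $p$: one uses an inclusion $p^{cr}\sj_n^{[r]}\subset \sj_n^{<r>}$, with $c$ universal (dictated by the factorials $p^a a!$ when $r=(p-1)a+b$), to transfer the contracting Frobenius estimate from the $\phi_r$-version to the $p^r-\phi$ version, obtaining a $p^{Nr}$-quasi-isomorphism rather than a strict one. The main obstacle will be to verify that the resulting constant $N$ can indeed be taken \emph{universal}, i.e.\ independent of $e$, $p$, $X$, and $n$. This requires a careful bookkeeping of the $p$-adic denominators produced by the divided-power structure and by Frobenius acting on the logarithmic differentials $\Omega^{\bullet}_{\mathfrak{X}}(\log)$ in the semistable case, together with the verification that the ramification of $\so_K/W(k)$ enters only through a correction bounded linearly in $r$, which is then absorbed into $N$.
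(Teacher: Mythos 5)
This proposition is quoted in the paper directly from \cite[Prop.\ 3.12]{CN} with no internal proof, so there is nothing in the paper itself to compare your argument against; what follows is an assessment of your sketch on its own merits.

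The decisive gap is in cohomological degree $r$. Writing $J^{\scriptscriptstyle\bullet}:=J_{D_n}^{<r-{\scriptscriptstyle\bullet}>}\otimes\Omega^{\scriptscriptstyle\bullet}_{Z_n}$ and $\so^{\scriptscriptstyle\bullet}:=\so_{D_n}\otimes\Omega^{\scriptscriptstyle\bullet}_{Z_n}$, the long exact sequence of the cone gives
\begin{equation*}
\sh^{r}(J^{\scriptscriptstyle\bullet})\xrightarrow{\ 1-\phi_r\ }\sh^{r}(\so^{\scriptscriptstyle\bullet})\longrightarrow\sh^{r+1}(\sss_n(r))\longrightarrow\sh^{r+1}(J^{\scriptscriptstyle\bullet})\xrightarrow{\ 1-\phi_r\ }\sh^{r+1}(\so^{\scriptscriptstyle\bullet}),
\end{equation*}
so $\sh^{r+1}(\sss_n(r))=0$ requires not only injectivity of $1-\phi_r$ on $\sh^{r+1}$ but also \emph{surjectivity} of $1-\phi_r$ on $\sh^{r}$. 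Your Neumann-series argument only gives the first: for $i\geq r+1$ the Frobenius $\phi_r$ on $\so_{D_n}\otimes\Omega^i$ is $p^{i-r}$-divisible, hence nilpotent mod $p^n$. But in degree $r$, $\phi_r=\phi/p^r$ on $\so_{D_n}\otimes\Omega^r$ is \emph{not} $p$-divisible, and $1-\phi_r$ is not an isomorphism by any contraction argument. Already for $X=\mathbf{A}^1_{W(k)}$, $r=1$, $n=1$, one has $\sh^1(\so^{\scriptscriptstyle\bullet})\neq0$ (spanned by the classes of $X^{mp-1}\,dX$, $m\geq1$), and proving that $1-\phi_1$ hits all of it requires combining the Frobenius with the de Rham differential in a telescoping (Artin--Schreier) manner, and even then only after passing to the \'etale topology. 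This is exactly the content of the Artin--Schreier/Cartier exact sequence
$0\to W_n\Omega^r_{\log}\to W_n\Omega^r\xrightarrow{1-F}W_n\Omega^r/dV^{n-1}\Omega^{r-1}\to0$
(quoted in the paper from \cite{CTSS}, \cite{Lor}), which is an \'etale-local statement, not a topological one. Your sketch never invokes the \'etale topology and so cannot produce the degree-$r$ surjectivity; the Neumann-series step is indifferent to the choice of topology, which is a signal that it cannot be the whole argument.

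A second, smaller issue: for $X$ log-smooth or semistable over $\so_K$ rather than over $W(k)$, the de Rham complex $\Omega^{\scriptscriptstyle\bullet}_{Z_n/W_n(k)}$ carries an extra arithmetic direction ($T\mapsto\varpi$) and a nontrivial divided-power envelope, and the estimate $\phi(\Omega^i)\subset p^i\Omega^i$ holds only after working in the right coordinates (compare the diagram (\ref{diagram}) and the passage to the ring $R^{[1]}$ in the paper). Getting a \emph{universal} constant $N$ (independent of $e$, $p$, $n$, $X$) in the semistable case is the technical core of \cite{CN} and rests on the explicit Koszul-complex computations and change-of-Frobenius lemmas there; your final paragraph acknowledges this as ``the main obstacle'' but supplies no mechanism for closing it. As written, the proposal proves considerably less than the proposition.
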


 The natural map $\omega: \sss^{\prime}_n(r)\to \sss_n(r)$ induced by the maps $p^{r}: \sj_n^{[r]}\to \sj_n^{<r>}$ and $\id : \so^{\crr}_n \to \so^{\crr}_n $ has kernel and cokernel  killed by $p^{r}$. 
 So does the map $\tau: \sss_n(r)\to \sss^{\prime}_n(r)$ induced by the maps $\id : \sj_n^{<r>}\to \sj_n^{[r]}$ and $p^{r} : \so^{\crr}_n \to \so^{\crr}_n $. We have $\tau\omega=\omega\tau=p^{r}$.

If it does not cause confusion, we will  write $\sss_n(r)$, $\sss^{\prime}_n(r)$ also  for $\R\ve_*\sss_n(r)$, $\R\ve_*\sss^{\prime}_n(r)$, respectively, where $\varepsilon: X_{n,\synt}\to X_{n,\eet}$ is the natural projection to the \'etale site (or sometimes to the Nisnevich site)

\subsubsection{Syntomic cohomology and differential forms}
Let $X$ be a
syntomic  scheme over $W(k)$. Recall the
differential definition
\cite{K} of syntomic cohomology. Assume first
that we have an immersion $\iota:X\hookrightarrow Z$ over $W(k)$ such that
$Z$ is a smooth $W(k)$-scheme endowed with a compatible system of liftings of the Frobenius
$\{F_n:Z_n\to Z_n\}$. Let $D_n=D_{X_n}(Z_n)$ be the PD-envelope of $X_n$ in $Z_n$ (compatible with the canonical PD-structure on $pW_n(k)$) and
$J_{D_n}$ the ideal of $X_n$ in $D_n$. Set $J^{<r>}_{D_n}:=\{a\in J_{D_{n+s}}^{[r]}|\phi(a)\in p^r\so_{D_{n+s}}\}/p^n$ for some $s\geq r$.
For $0\leq r\leq p-1$, $J_{D_n}^{<r>}=J_{D_n}^{[r]}$.
 This definition is independent of $s$.
Consider the following
complexes of sheaves on $X_{\eet}$. 
\begin{align}
 \label{lifted}
S_n(r)_{X,Z}: & =\Cone(J_{D_n}^{<r-{\scriptscriptstyle\bullet}>}\otimes
\Omega\kr_{Z_n}\stackrel{ 1-\phi_r}{\longrightarrow} \so_{D_n}\otimes
\Omega\kr_{Z_n})[-1],\\
S^{\prime}_n(r)_{X,Z}: & =\Cone(J_{D_n}^{[r-{\scriptscriptstyle\bullet}]}\otimes
\Omega\kr_{Z_n}\verylomapr{p^{r} -\phi}\so_{D_n}\otimes
\Omega\kr_{Z_n})[-1],\notag
\end{align}
where $\Omega\kr_{Z_n}:=\Omega\kr_{Z_n/W_n(k)}$ and $\phi_r $ is $"\phi/p^r"$ (see \cite[2.1]{Ts} for details). The complexes
$S_n(r)_{X,Z}$, $S^{\prime}_n(r)_{X,Z}$ are, up to canonical quasi-isomorphisms, independent of
the choice of $\iota$ and $\{F_n\}$ (and we will omit the subscript $Z$ from the notation). Again, the natural maps $\omega: S^{\prime}_n(r)_X \to S_n(r)_X$ and 
$\tau: S_n(r)_X \to S^{\prime}_n(r)_X$ have kernels and cokernels annihilated by $p^{r}$.

  In general, immersions as above exist \'etale locally, and we define
$S_n(r)_X\in
{\mathbf D}^+(X_{\eet},{\mathbf Z}/p^n)$ by gluing the local complexes. We define
 $S^{\prime}_n(r)_X$ in a similar way.

 Let now $X$ be a log-syntomic scheme over $W(k)$.
Using log-crystalline cohomology,  the above construction of syntomic complexes goes through almost
verbatim (see \cite[2.1]{Ts} for details) to yield
the logarithmic analogs $S_n(r)$ and  $S^{\prime}_n(r)$ on $X_{\eet}$. 
In this paper we are often interested in log-schemes coming from a regular syntomic scheme $X$ over $W(k)$ and a relative simple (i.e., with no self-intersections) normal crossing divisor $D$  on $X$. In such cases we will write $S_n(r)_X( D)$ and $S^{\prime}_n(r)_X(D)$ for the  syntomic complexes and use the Zariski topology instead of the \'etale one.  

\subsubsection{Products}  We need to discuss products. Assume that we are in the lifted situation (\ref{lifted}). Then we have a product structure
$$\cup: S^{\prime}_n(r)_{X,Z}\otimes S^{\prime}_n(r^{\prime})_{X,Z}\to S^{\prime}_n(r+r^{\prime})_{X,Z}, \quad r,r^{\prime}\geq 0,
$$
defined by the following formulas
\begin{align*}
(x,y)\otimes (x^{\prime},y^{\prime}) & \mapsto (xx^{\prime}, (-1)^ap^{r}xy^{\prime}+y\phi(x^{\prime}))\\
(x,y)\in S^{\prime}_n(r)_{X,Z}^a & =(J_{D_n}^{[r-a]}\otimes
\Omega^a_{Z_n})\oplus(\so_{D_n}\otimes
\Omega^{a-1}_{Z_n}),\\
(x^{\prime},y^{\prime})\in S^{\prime}_n(r^{\prime})_{X,Z}^b & = (J_{D_n}^{[r^{\prime}-b]}\otimes
\Omega^b_{Z_n})\oplus(\so_{D_n}\otimes
\Omega^{b-1}_{Z_n}).
\end{align*}
Globalizing, we obtain the product structure
$$\cup: S^{\prime}_n(r)_{X}\otimes ^{{\mathbb L}}S^{\prime}_n(r^{\prime})_{X}\to S^{\prime}_n(r+r^{\prime})_{X}, \quad r,r^{\prime}\geq 0.
$$
This product is clearly compatible with the crystalline product.

   Similarly, we have the product structures
$$\cup: S_n(r)_{X,Z}\otimes S_n(r^{\prime})_{X,Z}\to S_n(r+r^{\prime})_{X,Z}, \quad r,r^{\prime}\geq 0,
$$
defined by the formulas
\begin{align*}
(x,y)\otimes (x^{\prime},y^{\prime}) & \mapsto (xx^{\prime}, (-1)^axy^{\prime}+y\phi_{r^{\prime}}(x^{\prime}))\\
(x,y)\in S_n(r)_{X,Z}^a & =(J_{D_n}^{<r-a>}\otimes
\Omega^a_{Z_n})\oplus(\so_{D_n}\otimes
\Omega^{a-1}_{Z_n}),\\
(x^{\prime},y^{\prime})\in S_n(r^{\prime})_{X,Z}^b & = (J_{D_n}^{<r^{\prime}-b>}\otimes
\Omega^b_{Z_n})\oplus(\so_{D_n}\otimes
\Omega^{b-1}_{Z_n}).
\end{align*}
Globalizing, we obtain the product structure
$$\cup: S_n(r)_{X}\otimes ^{{\mathbb L}}S_n(r^{\prime})_{X}\to S_n(r+r^{\prime})_{X}, \quad r,r^{\prime}\geq 0.
$$
This product is also clearly compatible with the crystalline product.
  
 The above product structures are compatible with the maps $\omega$.
On the other hand the maps $\tau$ are, in general, not compatible with products.

\subsubsection{Symbol maps}   Let $X$ be a regular syntomic scheme over $W(k)$ with a divisor $D$ with relative simple normal crossings.
Recall that there are symbol  maps defined by Kato and Tsuji \cite[2.2]{Ts}
\begin{equation}
\label{symbols}
(M^{\gp}_{X,n})^{\otimes r}\to H^r(S^{\prime}_n(r)_X(D)),\quad (M^{\gp}_{X,n+1})^{\otimes r}\to H^r(S_n(r)_X(D)),\quad r\geq 0, 
\end{equation}
 where, for a log-scheme $X$, $M_X$ denotes its log-structure.
For $r=1$, we get the first Chern class maps (recall that $M^{\gp}_X=j_*\so^*_{X\setminus D}$, where  $j:X\setminus D\hookrightarrow X$ is the natural immersion)
\begin{align*}
{c}_1^{\synt}:\quad & j_*\so^*_{X\setminus D}[-1]\to i_*j_*\so^*_{{(X\setminus D)}_{n+1}}[-1]\to
S_n(1)_X(D),\\
{c}_1^{\synt}:\quad & j_*\so^*_{X\setminus D}[-1]\to i_*j_*\so^*_{{(X\setminus D)}_{n}}[-1]\to
S^{\prime}_n(1)_X(D),
\end{align*}
that are compatible, i.e., the following diagram commutes
$$
\xymatrix{
j_*\so^*_{X\setminus D}[-1] \ar[d]^{p{c}_1^{\synt}} \ar[r]^{{c}_1^{\synt}}  & S^{\prime}_n(1)_X(D)\ar[dl]^{\omega}\\
S_n(1)_X(D) &\\
}
$$
In the lifted situation these classes are defined in the following way. Let $C_n$ be the complex
$$
(1+J_{D_n}\to M^{\gp}_{D_n})\simeq j_*\so^*_{(X\setminus D)_n}[-1].
$$
 The Chern class maps
\begin{equation*}
{c}_1^{\synt}:j_*\so^*_{{(X\setminus D)}_{n}}[-1]\to
S^{\prime}_n(1)_X(D), \quad
{c}_1^{\synt}: j_*\so^*_{{(X\setminus D)}_{n+1}}[-1]\to
S_n(1)_X(D),
\end{equation*}
 are defined by the morphisms of complexes $$C_n\to S^{\prime}_n(1)_{X,Z},\quad
C_{n+1}\to S_n(1)_{X,Z}$$ given by the formulas
\begin{align*}
1+J_{D_n}\to (S^{\prime}_n(1)_{X,Z})^0 & =J_{D_n};\quad a\mapsto  \log a;\\
1+J_{D_{n+1}}\to (S_n(1)_{X,Z})^0 & =J_{D_n};\quad a\mapsto  \log a \mod p^n;
\end{align*}
and
\begin{align*}
 M^{\gp}_{D_n}\to (S^{\prime}_n(1)_{X,Z})^1 & =(\so_{D_n}\otimes \Omega^1_{Z_n})\oplus \so_{D_n};
\quad b\mapsto (d \log b,  \log (b^p\phi_{D_n}(b)^{-1}));\\
M^{\gp}_{D_{n+1}}\to (S_n(1)_{X,Z})^1 & =(\so_{D_n}\otimes \Omega^1_{Z_n})\oplus \so_{D_n};
\quad b\mapsto (d\log  b\mod p^n, p^{-1} \log(b^p\phi_{D_{n+1}}(b)^{-1})).\\
\end{align*}
The symbol maps (\ref{symbols}) for general $r$ are obtained from $r=1$ using the product structure on syntomic cohomology.
\subsection{Syntomic-\'etale cohomology}
We will now recall the definition and basic properties of
syntomic-\'etale cohomology.  The relationship between syntomic cohomology and syntomic-\'etale cohomology mirrors the one between \'etale nearby cycles and \'etale cohomology. Let $X$ be a log-scheme, log-syntomic  over $\Spec(W(k))$.
  We will  need the logarithmic version of the syntomic-\'etale site of Fontaine-Messing \cite{FM}.
We say that a morphism $\szz\to\sy$ of $p$-adic formal log-schemes over $\Spf (W(k))$ is (small) log-syntomic if every $Z_n\to Y_n$ is (small) log-syntomic.
 For a formal log-scheme $\szz$ the syntomic-\'etale site $\szz_{\synee}$ is defined by taking as objects morphisms $f: \sy\to\szz$ that are small log-syntomic and have  log-\'etale generic fiber. 
 This last condition means that,  \'etale locally on $\sy$,  $f$ has a factorization $\sy\stackrel{i}{\to}\sx\stackrel{g}{\to}\szz$
 with $\sx$ affine, $i$ an exact closed immersion, and $g$ log-smooth such that the map 
 $F\otimes_{W(k)}\Gamma(\sy,I/I^2)\to F\otimes_{W(k)}\Gamma(\sy,i^*\Omega^1_{\sx/\szz})$ is an isomorphism, where $I$ is the ideal of $\so_{\sx}$ defining $\sy$.
  For a log-scheme $Z$, we also have the syntomic-\'etale site $Z_{\synee}$. Here the objects are morphisms $U\to Z$ that are small log-syntomic  with the generic fiber
 $U_K$  log-\'etale over $Z_K$.
 
   Let $\wh{X}$ be the $p$-adic completion of $X$. Let $i:X_{n,\eet}\to X_{\eet}$ and $j:X_{\tr,K,\eet}\to X_{\eet}$ be the natural maps. 
Here $X_{\tr}$ is the open set of $X$ where the log-structure is trivial. We have  the following commutative diagram of maps of topoi
$$
\begin{CD}
\wh{X}_{\synee} @>i_{\synee} >> X_{\synee} @<j_{\synee} << X_{K,\synee}\\
@V\wh{\ve} VV @V\ve VV @V\ve_K VV\\
\wh{X}_{\eet} @>i_{\eet} >> X_{\eet} @< j_{\eet} << X_{K,\eet}
\end{CD}
$$
 Assume first that $0\leq r\leq p-2$. Abusively, let $\sss_n(r)$ denote also  the direct image of $\sss_n(r)$ under the canonical morphism $X_{n,\synt}\to \wh{X}_{\synee}$. 
 By \cite[III.5]{FM},
 for $j^{\prime}: X_{\tr,K,\eet}\to X_{K,\synee}$,
there is  a canonical homomorphism
 $$\alpha_r:
 \sss_n(r) \rightarrow i^*_{\synee}j_{\synee *}j^{\prime}_*G{\mathbf Z}/p^n(r),
 $$
 where $G$ denotes the Godement resolution of a sheaf (or a complex of sheaves). 
 Similarly, for any $r\geq 0$, we get a natural map
$$
\tilde{\alpha}_{r}: \sss_n(r) \rightarrow i^*_{\synee}j_{\synee *}j^{\prime}_*G{\mathbf Z}/p^n(r)',
$$
where ${\mathbf Z}/p^n(r)'=(p^aa!)^{-1}{\mathbf Z}/p^n(r)$ for $r=(p-1)a+b,a,b\in{\mathbf Z}, 0\leq b < p-1$ \cite[III.5]{FM}. Composing with the map 
$\sss'_n(r)\to \sss_n(r)$ we get a natural
 morphism
$$
\alpha_{r}: \sss'_n(r) \rightarrow i^*_{\synee}j_{\synee *}j^{\prime}_*G{\mathbf Z}/p^n(r)'.
$$
\subsubsection{Syntomic complexes and $p$-adic nearby cycles}
  For log-schemes over $\so_K^{\times}$, in a stable range, syntomic cohomology tends to  compute (via the period morphism) $p$-adic nearby cycles . We will briefly recall the relevant theorems. 
 For $0\leq r\leq p-2$, there is a natural homomorphism on the \'etale site of $X_n$
$$
\alpha_{r}: \sss_n(r) \rightarrow i^*\R j_*{\mathbf Z}/p^n(r).
$$  
To define it, 
 we apply $\R \wh{\varepsilon}_*$ to the  map 
$\sss_n(r)\rightarrow i^*_{\synee}\R j_{\synee *}\R j_*^{\prime}{\mathbf Z}/p^n(r)$ induced from the map $\alpha_r$ described above and get 
$$\R \varepsilon_*\sss_n(r) =\R \wh{\varepsilon}_*\sss_n(r)\rightarrow 
\R \wh{\varepsilon}_*i^*_{\synee}\R j_{\synee *}\R j^{\prime}_{*} {\mathbf Z}/p^n(r)=i^*_{\eet}\R \varepsilon_*\R j_{\synee *}\R j^{\prime}_{*}{\mathbf Z}/p^n(r)=i^*\R j_*{\mathbf Z}/p^n(r).
$$
The first equality follows from the fact that the morphism $X_{n,\synt}\to \wh{X}_{\synee}$ is exact \cite[III.4.1]{FM}.
The second equality was proved in \cite[2.5]{KM}, \cite[5.2.3]{Ts0}.
One checks that $\alpha_r$ is compatible with products.
\begin{theorem}(\cite[Theorem 5.1]{Ts1})
\label{input0}
 For $i\leq r\leq p-2$ and  for  a fine and saturated log-scheme $X$ log-smooth over  $\so_K^{\times}$  the period map
\begin{equation}
\label{period2}\alpha_{r}:\quad  \sss_n(r)_{X} \stackrel{\sim}{\rightarrow} \tau_{\leq r}i^*\R j_*{\mathbf Z}/p^n(r)_{X_{\tr}}.
\end{equation}
is an isomorphism. 
\end{theorem}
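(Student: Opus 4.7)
My plan is to reduce the statement to a local computation on small étale charts of $X$, to identify both sides of $\alpha_r$ with explicit objects governed by symbols and period rings, and to establish the comparison degree by degree.

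First, since the claim is étale-local on $X_0$, I would assume $X = \Spec R$ is a small affine log-smooth chart, with $R$ étale over a standard semistable model $\so_K[T_0, \ldots, T_a, T_{a+1}^{\pm 1}, \ldots, T_d^{\pm 1}]/(T_0 \cdots T_a - \varpi)$ and a chosen smooth lift $Z/W(k)$ carrying a compatible Frobenius lift $F$. In this setting $\sss_n(r)_X$ is represented by the Koszul-type complex $S_n(r)_{X,Z}$ recalled in Section~2.1.1, while Faltings' almost étale theory identifies $i^*\R j_* \Z/p^n(r)$ with continuous Galois cohomology of $G_R := \Gal(\overline{R}[1/p]/R[1/p])$ with values in $\Z/p^n(r)$.

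Second, surjectivity on the cohomology sheaves $H^q$ for $q \leq r$ I would deduce from a symbol argument. On the right-hand side, a theorem of Bloch--Kato (extended to the log-smooth semistable case by Hyodo) asserts that $i^* R^q j_* \Z/p^n(r)$ is generated by symbols $\dlog(a_1) \cup \cdots \cup \dlog(a_q) \cup \zeta^{\otimes (r-q)}$ with $a_i$ sections of $M^{\gp}_X$ and $\zeta$ a primitive $p^n$-th root of unity. On the left, the syntomic symbol maps (\ref{symbols}) provide corresponding classes in $H^q(\sss_n(r))$, and a direct calculation from the Chern-class formulas of Section~2.1.4 shows that $\alpha_r$ identifies the two systems of symbols. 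Combined with the truncation statement $\tau_{\leq r} \sss_n(r) \stackrel{\sim}{\to} \sss_n(r)$ of Proposition~2.2, valid in the range $r \leq p-2$, this yields the surjectivity.

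Third---and this is the main obstacle---injectivity I would obtain by matching the mapping-cone resolution defining $\sss_n(r)$ against the fundamental short exact sequence of relative log $p$-adic Hodge theory,
$$0 \to \Z/p^n(r) \to \bigl(B^+_{\crr}(\overline{R})/p^n\bigr)^{\phi=p^r} \to B^+_{\dr}(\overline{R})/(p^n, \fil^r) \to 0,$$
and showing that, under the period map, the right-hand terms agree up to almost-zero contributions. Injectivity of $\alpha_r$ in degrees $\leq r$ then reduces to the (almost-)vanishing of the Galois cohomology groups $H^q(G_R, \fil^r B^+_{\dr}(\overline{R})/p^n)$ for $q < r$, together with the identification of the Frobenius invariants $\bigl(B^+_{\crr}(\overline{R})/p^n\bigr)^{\phi=p^r}$ on the crystalline side. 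Sharpening Faltings' almost-isomorphisms to honest isomorphisms is precisely what forces the hypothesis $r \leq p-2$: beyond that range, the divided Frobenius $\phi_r = \phi/p^r$ ceases to preserve integrality, denominators proliferate in the period rings, and the comparison degenerates.
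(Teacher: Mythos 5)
The paper does not prove this statement---it is imported wholesale from Tsuji \cite[Theorem 5.1]{Ts1} as an external input, with no argument of its own---so there is no internal proof to compare against. What you have written is therefore a sketch of one possible route, and it should be measured against Tsuji's actual argument (which rests on Bloch--Kato, Hyodo, Kato and Kurihara).

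Your plan is directionally reasonable but departs from Tsuji's method in a way that leaves a genuine gap, and one of your supporting claims is wrong on its face. Tsuji does not argue via the fundamental exact sequence of $p$-adic Hodge theory and vanishing of Galois cohomology of pieces of $B^+_{\dr}$; that is closer to Kurihara's method and to the Colmez--Nizio\l\ argument \cite{CN}, which for general $r$ inevitably produces the constants $N(e,p,r)$. Tsuji's actual mechanism is the symbol filtration $U^m$ of Bloch--Kato--Hyodo on $i^*R^q j_*\Z/p^n(r)$ together with a parallel filtration on $\sh^q(\sss_n(r))$: the graded pieces of both sides are identified with explicit sheaves of (logarithmic) differential forms on $X_0$, and $\alpha_r$ is checked to respect these identifications one filtration step at a time, which gives injectivity and surjectivity simultaneously. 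Your ``surjectivity by symbols'' observation accounts only for the top graded piece of that comparison; by itself it gives no handle on injectivity. Your proposed reduction of injectivity to ``(almost-)vanishing of $H^q(G_R,\fil^r B^+_{\dr}(\overline R)/p^n)$ for $q<r$'' is not a correct reduction as stated (note also that the term appearing in your own short exact sequence is $B^+_{\dr}/(\fil^r,p^n)$, not $\fil^r B^+_{\dr}/p^n$): the substantive step you defer to---identifying, inside the Galois complex of $(B^+_{\crr}(\overline R)/p^n)^{\phi=p^r}$, the exact subcomplex onto which the syntomic complex maps---is precisely where the work lies, and you give no indication of how to carry it out. Finally, your explanation for the bound $r\le p-2$ is incorrect: as recalled in Section~2.1 of this very paper, $\phi(\sj_n^{[r]})\subset p^r\so^{\crr}_n$ already holds for $0\le r\le p-1$, so the divided Frobenius $\phi_r$ does \emph{not} ``cease to preserve integrality'' at $r=p-1$; the $p-2$ bound has a different and subtler source, coming from the mod-$p^n$ structure of the Tate twist and divided powers inside $A_{\crr}$, not from integrality of $\phi_r$.
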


 Similarly, for any $r\geq 0$, we get a natural map
$$
\tilde{\alpha}_{r}: \sss_n(r) \rightarrow i^*\R j_*{\mathbf Z}/p^n(r)'.
$$
 Composing with the map 
$\omega: \sss'_n(r)\to \sss_n(r)$ we get a natural, compatible with products,  
 morphism
$$
\alpha_{r}: \sss^{\prime}_n(r) \rightarrow i^*\R j_*{\mathbf Z}/p^n(r)'.
$$
\begin{theorem}(\cite[Theorem 1.1]{CN})
\label{input1}
 For   $0\leq i\leq r$ and for a semistable scheme $X$ over $\so_K$,   consider the period map 
\begin{equation}
\label{maineq1}
\alpha_{r}:\quad  \sh^i(\sss^{\prime}_n(r)_{X}) \rightarrow i^*\R^ij_*{\mathbf Z}/p^n(r)'_{X_{\tr}}.
\end{equation}
If $K$ has enough roots of unity then the kernel  and cokernel of this map are annihilated by $p^{Nr}$ for a universal constant $N$ {\rm (not depending on $p$, $X$, $K$, $n$ or $r$)}.
In general, the kernel  and cokernel of this map are annihilated by $p^N$ for an integer $N=N(e,p,r)$, which depends on $e$, $r$, but not on $X$ or $n$.
   \end{theorem}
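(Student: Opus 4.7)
The plan is to reduce to a local computation in which both sides become Galois cohomology of an explicit period ring, and then estimate the failure of the period map to be an isomorphism by controlling the defect in the fundamental exact sequence of $p$-adic Hodge theory.

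First, I would work locally on the special fiber: choose a small affine chart where $X = \Spec(R)$ is étale over the standard semistable chart $\so_K[T_0,\dots,T_d]/(T_0\cdots T_s - \varpi)$. After passing to strict henselizations, the stalk of $i^*\R j_*\Z/p^n(r)'$ at a geometric point is the continuous Galois cohomology $H^i(\mathcal{G}_R,\Z/p^n(r)')$ of the absolute Galois group of $R_K=R[1/p]$. Introducing the cyclotomic Kummer tower $R_\infty$ (obtained by adjoining $p$-power roots of $\varpi$, of the $T_i$, and of unity) and its Galois group $\Gamma\simeq\Z_p(1)^{d+1}\rtimes\Z_p^\times$, Hochschild--Serre plus Faltings' almost purity reduces the Galois cohomology to a Koszul (Iwasawa) complex for $\Gamma$ with coefficients in the rings $\widehat{R}_\infty/p^n$ and, after Tate twisting, $A_{\cris}(\widehat{R}_\infty)/p^n$.

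Second, I would express the stalks of $\sh^i(\sss'_n(r)_X)$ through the long exact sequence \eqref{exact} as a Koszul-type computation on the absolute log-crystalline cohomology of $R_n$. Using Kato's embedding of $R$ into a smooth $W(k)$-lift $Z$ with compatible Frobenius, the complex $S'_n(r)_{X,Z}$ of \eqref{lifted} becomes
\[
\bigl[\fil^r\bigl(\so_{D_n}\otimes\Omega\kr_{Z_n}\bigr)\xrightarrow{\;p^r-\phi\;}\so_{D_n}\otimes\Omega\kr_{Z_n}\bigr],
\]
whose cohomology is computed by Hyodo--Kato / Tsuji in terms of the same period ring $A_{\cris}(\widehat{R}_\infty)$ after taking $\Gamma$-invariants. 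The Fontaine--Messing period map $\alpha_r$ is then induced from the inclusion $\Z/p^n(r)'\hookrightarrow (\fil^r A_{\cris}/p^n)^{\phi=p^r}$ provided by the fundamental exact sequence
\[
0\to \Z_p(r)\to \fil^r A_{\cris}\xrightarrow{\;1-\phi/p^r\;} A_{\cris}\to 0
\]
(valid rationally; integrally it holds only up to controlled $p$-power torsion).

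Third, and this is the core of the argument, I would estimate the kernel and cokernel of $\alpha_r$ on $\sh^i$ for $i\leq r$. The key ingredients are: (a) a quantitative version of the fundamental sequence modulo $p^n$, where $1-\phi/p^r$ fails to be surjective on $A_{\cris}/p^n$ only up to a universal $p^{Nr}$-factor coming from the divided-power filtration and the Frobenius-estimate $\phi(\fil^r)\subset p^r A_{\cris}$; (b) a Tate-style computation of the Iwasawa cohomology of $\Gamma$ with coefficients in $\Z_p(r)$ showing that the differential of the Koszul complex is bounded below and above by $p^{Nr}$ in a suitable norm; (c) Hochschild--Serre to patch the two. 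Putting these together bounds $\ker\alpha_r$ and $\coker\alpha_r$ by $p^{Nr}$ when $K$ has enough roots of unity, so that $\Gamma$ acts through $\Z_p(1)^{d+1}$ and all constants are universal. In the general case one bootstraps by passing to $K(\zeta_{p^m})$ for $m\gg c(K)$ and controls the extra $\Gal(K(\zeta_{p^m})/K)$-factor by $p^{N(e,p,r)}$, as in Section~2.1.1 of~\cite{CN}.

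The main obstacle is step three: producing universal (resp.\ $e$-dependent) constants for the Koszul complex of $\Gamma$ acting on $A_{\cris}(\widehat{R}_\infty)/p^n$ and simultaneously for the defect of $1-\phi/p^r$. This is a delicate $(\phi,\Gamma)$-module calculation carried out in~\cite{CN}; I would not redo the Bloch--Kato style bounds from scratch but would invoke them, being careful that they apply uniformly in $n$ and $X$ and only depend on $(e,p,r)$, to conclude the stated $p^N$-isomorphism on cohomology sheaves.
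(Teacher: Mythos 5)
This statement is not proved in the paper at all: Theorem~\ref{input1} is imported verbatim as a black box from \cite[Theorem~1.1]{CN}, and the present paper never gives an independent argument for it. There is therefore no in-paper proof to compare your attempt against. What you have written is a (broadly plausible) reconstruction of the architecture of the Colmez--Nizio{\l} proof --- reduce to a standard semistable chart, pass through the Kummer tower and almost purity to get a $\Gamma$-Koszul complex for the nearby-cycles side, compute the syntomic side via a lifted de Rham complex with divided-power filtration, and estimate the defect of $1-\phi/p^r$ and of the $\Gamma$-Koszul differentials by a quantitative fundamental exact sequence for $A_{\cris}$. But at the decisive third step, where the universal (resp.\ $e$-dependent) constant $N$ must actually be produced, you explicitly defer to \cite{CN} yourself. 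So your ``proof'' does not differ in route from the paper: both ultimately cite \cite{CN} for the whole content of the theorem, and your added sketch, while a reasonable summary of the \cite{CN} strategy, does not constitute an independent proof of the $p^{Nr}$ (resp.\ $p^{N(e,p,r)}$) bound. If the goal were to reprove the theorem, the gap is precisely the $(\phi,\Gamma)$-module estimate you punt on; if the goal is to justify it in the context of this paper, the correct move is simply the citation, and nothing more is required.
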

   \subsubsection{Syntomic-\'etale cohomology}
   Recall \cite[III.4.4]{FM}, \cite[5.2.2]{Ts0} that the functor 
$\sff\mapsto (i^*_{\synee}\sff,j^*_{\synee}\sff,i^*_{\synee}\sff\to i^*_{\synee}j_{\synee*}j^*_{\synee}\sff)$ from the category of sheaves 
on $X_{\synee}$ to the category of triples $(\sg,\sh,\sg\to i^*_{\synee}j_{\synee*}\sh)$, where $\sg$ (resp. $\sh$) are sheaves on $\wh{X}_{\synee}$ 
(resp. $X_{K,\synee}$) is an equivalence of categories. It follows that we can glue the complexes of sheaves $\sss_n(r)$ and $\sss^{\prime}_n(r)$ and the complexes of sheaves  $j_*^{\prime}G{\mathbf Z}/p^n(r)$
 and $j_*^{\prime}G{\mathbf Z}/p^n(r)^{\prime}$ by the maps
$\alpha_r$ and obtain complexes of sheaves $\se_n(r)$ and $\se_n^{\prime}(r)$ on $X_{\synee}$. We have the exact sequences
\begin{align*}
0\to & j_{\syneet!}j^{\prime}_*G{\mathbf Z}/p^n(r)\to \se_n(r)\to i_*\sss_n(r)\to 0,\quad & 0\leq r\leq p-2;\\
0\to & j_{\syneet!}j^{\prime}_*G{\mathbf Z}/p^n(r)^{\prime}\to \se^{\prime}_n(r)\to i_*\sss^{\prime}_n(r)\to 0,\quad & r\geq 0.\\
\end{align*}
\begin{remark}
The syntomic-\'etale complexes $\se_n(r)$ that we described here are the same (in the derived category) as those defined by Fontaine-Messing in \cite[5]{FM} in the case when $X_{\tr}=X$  but differ from those defined by Tsuji in \cite[5.2]{Ts0} in the general situation. More specifically, we have $$\se^T_n(r)=\sh^0(\se_n(r)),
$$
where we wrote $\se^T_n(r)$ for the syntomic-\'etale sheaves of Tsuji.
\end{remark}

   If it does not cause confusion, we will denote by $\se_n(r)$ and $\se^{\prime}_n(r)$  also the derived pushforwards of  $\se_n(r)$ and $\se^{\prime}_n(r)$ to $X_{\eet}$. Notice that they are quasi-isomorphic to 
the complexes obtained by gluing the complexes of sheaves $\sss_n(r) $ and $\sss'_n(r) $ and the complexes of sheaves $j^{\prime}_*G{\mathbf Z}/p^n(r)^{\prime}$  by the maps $\tilde{\alpha}_{r}$ and $\alpha_r$. Hence we have the distinguished triangles
\begin{equation}
\label{Niis}
j_{\eet!} \R j_*^{\prime}{\mathbf Z}/p^n(r)^{\prime}\to \se_n(r)\to i_*\sss_n(r),\quad j_{\eet !}  \R j_*^{\prime}{\mathbf Z}/p^n(r)^{\prime}\to \se^{\prime}_n(r)\to i_*\sss^{\prime}_n(r),
\end{equation}
where  $j^{\prime}: X_{\tr,K}\to X_{K}$,
as well as the natural maps
$$\tilde{\alpha}_r: \se_n(r)\to \R j_*{\mathbf Z}/p^n(r)^{\prime},\quad \alpha_r: \se_n(r)^{\prime}\to \R j_*{\mathbf Z}/p^n(r)^{\prime}
$$
compatible with the maps $\tilde{\alpha}_r$ and $\alpha_r$. For $a\geq 0$, we have the truncated version of the above - the distinguished triangles
\begin{equation}
\label{Niis1}
j_{\eet!} \tau_{\leq a}\R j_*^{\prime}{\mathbf Z}/p^n(r)^{\prime}\to \tau_{\leq a}\se_n(r)\to i_*\tau_{\leq a}\sss_n(r),\quad j_{\eet !} \tau_{\leq a} \R j_*^{\prime}{\mathbf Z}/p^n(r)^{\prime}\to \tau_{\leq a}\se^{\prime}_n(r)\to i_*\tau_{\leq a}\sss^{\prime}_n(r).
\end{equation}
\subsubsection{Syntomic-\'etale cohomology and \'etale cohomology of the generic fiber}
For a log-scheme over $\so_K^{\times}$, in a stable range, syntomic-\'etale cohomology tends to compute \'etale cohomology of the generic fiber.
\begin{theorem}
\label{keylemma11}
Let $X$ be a log-scheme log-smooth   over ${\so_K}^{\times}$. 
Let $j:X_{\tr}\hookrightarrow X$ be the natural open immersion. Then
\begin{enumerate}
\item we have a natural quasi-isomorphism
$$
\tilde{\alpha}_r: \tau_{\leq r}\se_n(r) \simeq \tau_{\leq r}\R j_*{\mathbf Z}/p^n(r), \quad 0\leq r\leq p-2.
$$
\item if $X$ is semistable, there is a constant $N$ as in Theorem \ref{input1} and a natural morphism $$
{\alpha}_r: \se^{\prime}_n(r)\to  
\R j_*{\mathbf Z}/p^n(r)^{\prime},\quad r\geq 0,
$$
such that the induced map on cohomology sheaves in degree $q\leq r$ has kernel and cokernel annihilated by $p^{N}$. 
\end{enumerate}
\end{theorem}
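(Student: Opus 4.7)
The plan is to compare the defining distinguished triangle (\ref{Niis}) of the syntomic-\'etale complex with the standard localization triangle for $\R j_*$, using the period maps from Theorems \ref{input0} and \ref{input1} to control the special-fiber piece. First, I factor $j:X_{\tr}\hookrightarrow X$ as $X_{\tr}\stackrel{j^{\prime}}{\hookrightarrow}X_K\stackrel{j_K}{\hookrightarrow}X$; this is possible because the log-structure on $X$ pulled back from ${\so_K}^{\times}$ is nontrivial along the special fiber (the uniformizer vanishes there), so $X_{\tr}\subseteq X_K$, and hence $\R j_*\Z/p^n(r)^{\prime}\simeq \R j_{K*}\R j^{\prime}_*\Z/p^n(r)^{\prime}$. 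Applying the standard open-closed triangle $j_{K!}j_K^*\to\id\to i_*i^*$ to this complex yields
\[
j_{K!}\R j^{\prime}_*\Z/p^n(r)^{\prime} \to \R j_*\Z/p^n(r)^{\prime} \to i_*i^*\R j_*\Z/p^n(r)^{\prime}.
\]

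By construction of $\se_n(r)$ and $\se^{\prime}_n(r)$ by gluing, the natural maps $\tilde\alpha_r$ and $\alpha_r$ defined just before the theorem fit into morphisms of distinguished triangles from (\ref{Niis}) to the triangle above, in which the left vertical map is the identity on $j_{K!}\R j^{\prime}_*\Z/p^n(r)^{\prime}$ and the right vertical map is $i_*$ applied to the Fontaine--Messing period map $\sss_n(r)\to i^*\R j_*\Z/p^n(r)$ (resp.\ $\sss^{\prime}_n(r)\to i^*\R j_*\Z/p^n(r)^{\prime}$). For part (1) we have $\Z/p^n(r)^{\prime}=\Z/p^n(r)$ (since $0\leq r\leq p-2$), and Theorem \ref{input0} tells us that the right vertical becomes an isomorphism after $\tau_{\leq r}$; combined with the identity on the left, the 5-lemma on cohomology sheaves in degrees $\leq r$ forces $\tilde\alpha_r$ to be an isomorphism in those degrees, yielding $\tau_{\leq r}\se_n(r)\simeq \tau_{\leq r}\R j_*\Z/p^n(r)$. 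For part (2), the same diagram with primed complexes applies; Theorem \ref{input1} bounds the kernel and cokernel of the right vertical on $\sh^i$ by $p^N$ for $0\leq i\leq r$, and the $p^N$-analog of the 5-lemma (a composite of $p^N$-isomorphisms is a $p^{2N}$-isomorphism) delivers the required bound on $\sh^q(\se^{\prime}_n(r))\to\sh^q(\R j_*\Z/p^n(r)^{\prime})$ for $q\leq r$, any universal doubling of the constant being absorbed into $N=N(e,p,r)$.

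The main technical point will be the compatibility claim used above: that the globally defined map $\tilde\alpha_r$ (resp.\ $\alpha_r$) restricts on the $j_{K!}$ part to the identity and on the $i_*$ part to the Fontaine--Messing period map of Theorem \ref{input0} (resp.\ \ref{input1}). This should be essentially tautological from the gluing description of $\se_n(r)$ and $\se^{\prime}_n(r)$, together with the identifications $\R\wh\varepsilon_* i^*_{\synee}\simeq i^*_{\eet}\R\varepsilon_*$ and the exactness of $X_{n,\synt}\to\wh X_{\synee}$ used in the construction of the period map preceding Theorem \ref{input0}; however, since $\se_n(r)$ and $\se^{\prime}_n(r)$ are only well-defined as objects of the derived category, the diagram chase producing the morphism of triangles requires careful bookkeeping.
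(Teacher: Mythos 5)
Your proof is essentially the same as the paper's. You compare the defining gluing triangle (\ref{Niis}) for $\se_n(r)$ (resp.\ $\se^{\prime}_n(r)$) with the open-closed triangle for $\R j_*\Z/p^n(r)^{\prime}$ via a morphism of triangles that is the identity on the $j_{!}$-term and the Fontaine--Messing period map on the $i_*$-term, then invoke Theorem \ref{input0} (resp.\ Theorem \ref{input1}) and a five-lemma argument on cohomology sheaves; this is exactly the paper's argument, with the only additions being an explicit remark on the factorization $j = j_K \circ j'$ and the (correct) observation that the $p^N$-five-lemma produces a constant like $2N$, which the paper silently absorbs into "a constant $N$ as in Theorem~\ref{input1}."
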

\begin{proof}Assume that $0\leq r\leq p-2$. 
Consider the following commutative diagram of distinguished triangles
$$
\begin{CD}
j_{\eet!} \tau_{\leq r}\R j_*^{\prime}{\mathbf Z}/p^n(r)@>>>  \tau_{\leq r}\se_n(r)@>>> i_*\sss_n(r)\\
@V\wr V \id V @VV\tilde{\alpha}_rV @V\wr V\alpha_r V\\
j_{\eet!} \tau_{\leq r}\R j_*^{\prime}{\mathbf Z}/p^n(r) @>>> \tau_{\leq r}\R j_*{\mathbf Z}/p^n(r) @>>> i_*i^*\tau_{\leq r}\R j_*{\mathbf Z}/p^n(r)
\end{CD}
$$
The top  triangle is distinguished because we have the distinguished triangle from (\ref{Niis1}) and the natural map $\tau_{\leq r}\sss_n(r)\stackrel{\sim}{\to} \sss_n(r)$ is a quasi-isomorphism. The map $\alpha_r$ is a quasi-isomorphism by the main theorem of \cite{Ts1}. The first part of the theorem follows. 

  For the second part consider the following commutative diagram of distinguished triangles
$$
\begin{CD}
j_{\eet!} \tau_{\leq r}\R j_*^{\prime}{\mathbf Z}/p^n(r)^{\prime}@>>>  \tau_{\leq r}\se_n^{\prime}(r)@>>> i_*\tau_{\leq r}\sss^{\prime}_n(r)\\
@V\wr V \id V @VV{\alpha}_rV @VV\alpha_r V\\
j_{\eet!} \tau_{\leq r}\R j_*^{\prime}{\mathbf Z}/p^n(r)^{\prime} @>>> \tau_{\leq r}\R j_*{\mathbf Z}/p^n(r)^{\prime} @>>> i_*i^*\tau_{\leq r}\R j_*{\mathbf Z}/p^n(r)^{\prime}
\end{CD}
$$
By \cite[Theorem 1.1]{CN}, the right period map $\alpha_r$ on the level of cohomology has kernels and cokernels killed by $N$ for a constant $N$ as in the theorem. Hence the same is true of the left map ${\alpha}_r$, as wanted.
\end{proof}

  The above theorem implies that  the logarithmic syntomic-\'etale cohomology is close to  the logarithmic syntomic-\'etale cohomology of the complement of the divisor at infinity. 
  \begin{corollary}
  \label{reduction}
  Let $X$ be a semistable scheme  over ${\so_K}$ with a divisor at infinity $D_{\infty}$. We treat it is as a log-scheme over $\so_K^{\times}$. Let $Y:=X\setminus D_{\infty}$ and let $ j_1: Y\hookrightarrow X$. 
 \begin{enumerate}
\item we have a natural quasi-isomorphism
$$
\tilde{\alpha}_r: \tau_{\leq r}\se_n(r)_X \stackrel{\sim}{\to} \tau_{\leq r}\R j_{1*}\se_n(r)_Y , \quad 0\leq r\leq p-2.
$$
\item there is a constant $N$ as in Theorem \ref{input1} and a natural morphism $$
{\alpha}_r: \se^{\prime}_n(r)_X\to \R j_{1*}\se^{\prime}_n(r)_Y  ,\quad r\geq 0,
$$
such that the induced map on cohomology sheaves in degree $q\leq r$ has kernel and cokernel annihilated by $p^{N}$. 
\end{enumerate}
  \end{corollary}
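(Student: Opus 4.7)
The plan is to deduce this corollary formally from Theorem \ref{keylemma11} by observing that the open where the log structure is trivial is the same for $X$ and for $Y$.

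First, I would set up the geometric picture. Since $D_\infty$ lies in the locus where $X$ has nontrivial horizontal log structure and $X_0$ is where the vertical log structure lives, the open where all log structure on $X$ is trivial is $X_{\tr}=X\setminus(D_\infty\cup X_0)=Y\setminus Y_0=Y_{\tr}$, which is just the generic fiber $Y_K$. Let $j_X:X_{\tr}\hookrightarrow X$ and $j_Y:Y_{\tr}\hookrightarrow Y$ be the natural open immersions; then $j_X=j_1\circ j_Y$. Moreover, restricting the syntomic-\'etale construction from $X$ to $Y$ yields $j_1^*\se_n(r)_X\simeq \se_n(r)_Y$ (and similarly for $\se'_n(r)$), since $j_1^*$ restricts the log structure from $X_0\cup D_\infty$ to $Y_0$. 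By adjunction we obtain the natural morphisms
\[
\tilde{\alpha}_r:\se_n(r)_X\to \R j_{1*}\se_n(r)_Y,\qquad \alpha_r:\se^{\prime}_n(r)_X\to \R j_{1*}\se^{\prime}_n(r)_Y,
\]
and by construction these sit in a commutative diagram with the maps $\tilde{\alpha}_r$, $\alpha_r$ of Theorem \ref{keylemma11} applied on $X$ (with $j=j_X$) and on $Y$ (with $j=j_Y$), using $\R j_{X*}=\R j_{1*}\R j_{Y*}$.

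The second ingredient is the standard truncation identity: for any complex of sheaves $F$ on $Y_{\eet}$,
\[
\tau_{\leq r}\R j_{1*}F\stackrel{\sim}{\to}\tau_{\leq r}\R j_{1*}\tau_{\leq r}F,
\]
which follows from the distinguished triangle $\tau_{\leq r}F\to F\to \tau_{> r}F$ together with the fact that $\R j_{1*}\tau_{> r}F$ has cohomology concentrated in degrees $>r$ (its Leray spectral sequence has $E_2^{p,q}=\R^p j_{1*}\sh^q F$ with $q>r$).

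For part (1), applying Theorem \ref{keylemma11}(1) on $X$ and $Y$ gives $\tau_{\leq r}\se_n(r)_Y\simeq \tau_{\leq r}\R j_{Y*}\Z/p^n(r)$ and $\tau_{\leq r}\se_n(r)_X\simeq \tau_{\leq r}\R j_{X*}\Z/p^n(r)$. Combining,
\[
\tau_{\leq r}\R j_{1*}\se_n(r)_Y\simeq \tau_{\leq r}\R j_{1*}\tau_{\leq r}\se_n(r)_Y\simeq \tau_{\leq r}\R j_{1*}\R j_{Y*}\Z/p^n(r)=\tau_{\leq r}\R j_{X*}\Z/p^n(r)\simeq \tau_{\leq r}\se_n(r)_X,
\]
and the resulting isomorphism agrees with $\tilde{\alpha}_r$ by the compatibility above. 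For part (2), the same chain of identifications applies, but the two outer isomorphisms coming from Theorem \ref{keylemma11}(2) are only $p^N$-quasi-isomorphisms on cohomology sheaves in degrees $\leq r$, so the composed map $\alpha_r$ inherits kernels and cokernels killed by $p^{2N}$ (which we absorb into the constant $N$).

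The argument is essentially formal once Theorem \ref{keylemma11} is in hand; the only real content is the observation $X_{\tr}=Y_{\tr}$ together with the truncation identity. The mild subtlety, and what I would be most careful about, is verifying that the functorial map built by adjunction really does fit into a commutative square with the period maps of Theorem \ref{keylemma11} on $X$ and on $Y$, so that one may patch the resulting quasi-isomorphisms; this reduces to the compatibility of the period map $\alpha_r$ with the pullback $j_1^*$ of log-structures, which is built into the Fontaine--Messing construction of $\se_n(r)$ and $\se'_n(r)$.
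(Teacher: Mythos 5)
Your proposal is correct and follows essentially the same route as the paper: factor $j_X = j_1 \circ j_Y$ (the paper writes $j = j_1 j_2$), apply Theorem~\ref{keylemma11} on both $X$ and $Y$, and use the truncation identity $\tau_{\leq r}\R j_{1*}F \simeq \tau_{\leq r}\R j_{1*}\tau_{\leq r}F$ to splice the two identifications together. You spell out the truncation lemma and the compatibility of $\tilde{\alpha}_r$ with the adjunction map more explicitly than the paper (which states the quasi-isomorphisms and leaves the patching implicit), but the underlying argument is identical; the only cosmetic slip is that the natural arrow in your truncation identity runs $\tau_{\leq r}\R j_{1*}\tau_{\leq r}F \to \tau_{\leq r}\R j_{1*}F$, though since it is an isomorphism this is immaterial.
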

    \begin{proof}Note that $X_{\tr}=Y_K$ and set $j_2: Y_K\hookrightarrow Y$. We have $j=j_1j_2$. 
    By Theorem \ref{keylemma11}, both terms in the first claim are quasi-isomorphic to 
$$\tau_{\leq r}\R j_*\Z/p^n(r)_{X_{\tr}}=\tau_{\leq r}\R j_{1*}\tau_{\leq r}\R j_{2*}\Z/p^n(r)_{Y_K}.
$$
Hence they are quasi-isomorphic. The second claim of the corollary is proved in the same way.
  \end{proof}
\subsubsection{Nisnevich syntomic-\'etale cohomology}
    We will pass now to the Nisnevich topos of $X$. Denote by $\varepsilon: X_{\eet}\to X_{\Nis}$ the natural projection. For  $r\geq 0$, by applying $\R \varepsilon_*$ to the \'etale period map above and using that $\R \varepsilon_*i^*=i^*\R \varepsilon_*$\footnote{This equality fails for the projection to Zariski topology and is the reason we use Nisnevich topology instead of Zariski.} (c.f. \cite[2.2.b]{GD}),   we obtain     a natural map
$$
\tilde{\alpha}_{r}: \R \varepsilon_*\sss_n(r) \rightarrow i^*\R j_*\R \varepsilon_*{\mathbf Z}/p^n(r)'.
$$
 Composing with the map 
$\omega: \R \varepsilon_*\sss'_n(r)\to \R \varepsilon_*\sss_n(r)$ we get a natural, compatible with products,  
 morphism
$$
\alpha_{r}: \R \varepsilon_*\sss^{\prime}_n(r) \rightarrow i^*\R j_*\R \varepsilon_*{\mathbf Z}/p^n(r)'.
$$
Write, for simplicity,  $\sss_n(r)$ and $\sss^{\prime}_n(r)$  for the derived pushforwards of $\sss_n(r)$ and $\sss^{\prime}_n(r)$ from $X_{\eet}$ to $X_{\Nis}$. Same for $\se_n(r)$ and $\se^{\prime}_n(r)$.  
Notice that they are quasi-isomorphic to 
the complexes obtained by gluing the complexes of sheaves $\sss_n(r) $ and $\sss'_n(r) $ on $X_{0,\Nis}$ and the complexes of sheaves $\varepsilon_*j^{\prime}_*G{\mathbf Z}/p^n(r)^{\prime}$  on $X_{K,\Nis}$ by the maps $\tilde{\alpha}_{r}$ and $\alpha_r$. Hence we have the distinguished triangles
\begin{equation}
\label{Niis2}
j_{\Nis!} \R j_*^{\prime}\R \varepsilon_*{\mathbf Z}/p^n(r)^{\prime}\to \se_n(r)\to i_*\sss_n(r),\quad j_{\Nis !}  \R j_*^{\prime}{\mathbf Z}/p^n(r)^{\prime}\to \se^{\prime}_n(r)\to i_*\sss^{\prime}_n(r),
\end{equation}
as well as the natural maps
$$\tilde{\alpha}_r: \se_n(r)\to \R j_*\R \varepsilon_*{\mathbf Z}/p^n(r)^{\prime},\quad \alpha_r: \se_n(r)^{\prime}\to \R j_*\R \varepsilon_*{\mathbf Z}/p^n(r)^{\prime}
$$
compatible with the maps $\tilde{\alpha}_r$ and $\alpha_r$. For $a\geq 0$, we have the truncated version of the above - the distinguished triangles
\begin{equation}
\label{Niis3}
j_{\Nis !} \tau_{\leq a}\R j_*^{\prime}\R \varepsilon_*{\mathbf Z}/p^n(r)^{\prime}\to \tau_{\leq a}\se_n(r)\to i_*\tau_{\leq a}\sss_n(r),\quad j_{\Nis!} \tau_{\leq a} \R j_*^{\prime}\R \varepsilon_*{\mathbf Z}/p^n(r)^{\prime}\to \tau_{\leq a}\se^{\prime}_n(r)\to i_*\tau_{\leq a}\sss^{\prime}_n(r).
\end{equation}

Define the following complexes of sheaves on $X_{\Nis}$
\begin{align*}
\sss_n(r)_{\Nis}:=\tau_{\leq r}\sss_n(r), & \quad  \sss^{\prime}_n(r)_{\Nis}:=\tau_{\leq r}\sss^{\prime}_n(r);\\
\se_n(r)_{\Nis}:=\tau_{\leq r}\se_n(r), & \quad  \se^{\prime}_n(r)_{\Nis}:=\tau_{\leq r}\se^{\prime}_n(r).
\end{align*}
\begin{example}For $X=\Spec(W(k))$ we have 
\label{dim0}
$$
H^i(W(k),\sss_n(r)_{\Nis})=\begin{cases}
{\mathbf Z}/p^n & i=r=0,\\
W_n(k) & i=1, r\geq 1,\\
0 & \text{ otherwise}.
\end{cases}
$$
Moreover, the morphism $H^i(W(k),\se_n(r)_{\Nis})\to H^i(W(k),\sss_n(r)_{\Nis})$ is an isomorphism.

  To see the first claim, note that we have
$$
\sss_n(0)_{\eet}: W_n(k)\lomapr{1-\phi} W_n(k),\quad \sss_n(r)_{\eet}: 0\to W_n(k),\quad r\geq 1.
$$
It follows that
$$
\sss_n(0)_{\Nis}= {\mathbf Z}/p^n,\quad \sss_n(r)_{\eet}:=W_n(k)[-1],\quad r\geq 1.
$$

   For the second claim use the distinguished triangle (\ref{Niis3}) and the fact that $H^i(W(k),j_{\Nis !} \tau_{\leq a}\R j_*^{\prime}\R \varepsilon_*{\mathbf Z}/p^n(r))=0$,  $i\geq 0$, because
   $W(k)$ is henselian. 
\end{example}
\section{Syntomic cohomology  and motivic cohomology}
 Let $X$ be a smooth scheme  over $\so_K$. Let  ${\mathbf Z}(r)_{\M}$ denote  the complex of motivic sheaves
${\mathbf Z}(r)_{\M}:=X\mapsto z^r(X,2r-*) $ in the \'etale topology of $X$. Let ${\mathbf Z}/p^n(r)_{\M}:={\mathbf Z}(r)_{\M}\otimes\Z/p^n$.
Recall how the complex $z^r(X,*)$ is defined \cite{Bl}. Denote by $\triangle ^n$ the algebraic n-simplex 
$\Spec {\mathbf Z}[t_0,\ldots,t_n]/(\sum t_i-1)$. 
 Let $z^r(X,i)$ be the free abelian group generated by closed integral subschemas  of  codimension $r$ 
of $X\times \triangle ^i$ meeting all faces  properly.  Then $z^r(X,*)$ is the  chain complex thus defined with boundaries
 given by pullbacks of cycles along face maps. This complex is covariant for proper morphisms (with a shift in weight and degree) and contravariant for flat morphisms. 
 
   We know that in the Zariski topology 
$H^j(X_{\Zar},{\mathbf Z}/p^n(i)_{\M}) =H^j\Gamma(X_{\Zar},\Z/p^n(r)_{\M})$ is the 
Bloch higher Chow group \cite[Theorem 3.2]{GD} and  that this is also the case for the Nisnevich topology  \cite[Prop. 3.6]{GD}.  Locally, 
in the \'etale topology, when $p$ is invertible,  the \'etale cycle class map defines a quasi-isomorphism ${\mathbf Z}/p^n(r)_{\M}\simeq \Z/p^n(r) $; when $X$ is of characteristic $p$, then the logarithmic de Rham-Witt cycle class map defines a quasi-isomorphism $\Z/p^n(r)_{\M}\simeq W_n\Omega^r_{X,\log}[-r]$ \cite{GL}, where, for a log-scheme $Y$, $W_n\Omega^*_{Y,\log}$ denotes the sheaf of logarithmic de Rham-Witt differential forms \cite{Lor}.  
  Moreover, if $i:Z\hookrightarrow X$ is a closed subscheme of codimension $c$ with open complement $j:U\hookrightarrow X$ then the exact sequence
$$
0\to i_*{\mathbf Z}(r-c)_{M,Z}[-2c]\to {\mathbf Z}(r)_{M,X}\to j_*{\mathbf Z}(r)_{M,U}
$$
forms a distinguished triangle in the derived category of sheaves on $X_{*}$, $*$ denoting the Zariski or Nisnevich topology. We define motivic cohomology as
\begin{align*}
H^*_{\M}(X,\Z/p^n(r)) & :=H^*(X_{\Zar},\Z/p^n(r)_M)=H^*(X_{\Nis},\Z/p^n(r)_M);\\
H^*_{\M,\eet}(X,\Z/p^n(r)) & := H^*(X_{\eet},\Z/p^n(r)_M).
\end{align*}

For a  smooth scheme $Y$ over $\so_K$, we define its $p$-adic motivic cohomology as 
   $$  
   H^*_{\M}(Y,\Q_p(r))  := H^*(\holim_n\R\Gamma(Y_{\Zar},\Z/p^n(r)_{\M})\otimes\Q)=H^*(\holim_n\Gamma(Y_{\Zar},\Z/p^n(r)_{\M})\otimes\Q).   
   $$
   We define its \'etale version $H^*_{\M,\eet}(Y,\Q_p(r))$ in an analogous way.

  We list the following corollary of Theorem \ref{keylemma11}.
\begin{corollary}
Let $X$ be a smooth variety over $K$. Then there exists a natural syntomic cycle class map
$$
\cl^{\synt}_{i,r}: H^i_{\M}(X,\Q_p(r))\to H^i_{\synt}(X,\Q_p(r)),
$$
where the target group is the syntomic cohomology defined in \cite{NN}.
This map is compatible with the \'etale cycle class map, i.e., the following diagram commutes
$$
\xymatrix{
H^i_{\M}(X,\Q_p(r))\ar[d]^{\cl^{\synt}_{i,r}} \ar[dr]^{\cl^{\eet}_{i,r}}\\
H^i_{\synt}(X,\Q_p(r))\ar[r]^{\alpha^{NN}_{i,r}} &  H^i_{\eet}(X,\Q_p(r)),
}
$$
where $\alpha^{NN}_{i,r}$ is the period map defined in \cite{NN}.
Moreover, the cycle class map $\cl^{\synt}_{i,r}$  is an isomorphism for $i\leq r$. 
\end{corollary}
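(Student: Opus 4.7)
The plan is to reduce to the semistable case via de Jong's alterations, apply Theorem~\ref{main0}, and then use the appendix's comparisons to identify the resulting syntomic-\'etale cohomology with the syntomic cohomology of \cite{NN}. I proceed in three steps.

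First, starting from a smooth compactification of $X$ with simple normal crossings boundary (obtained by Hironaka's resolution in characteristic zero) and invoking de Jong's theorem on alterations, there exists a finite Galois extension $K'/K$ and a proper semistable scheme $\overline{\sx}$ over $\so_{K'}$ equipped with a simple normal crossings divisor $D_\infty$ such that the complement $Y:=\overline{\sx}\setminus D_\infty$ admits a proper, surjective, generically \'etale map $Y_{K'}\to X_{K'}$. Viewing $\overline{\sx}$ as a log-scheme over $\so_{K'}^{\times}$ with log structure from $D_\infty\cup \overline{\sx}_0$, Theorem~\ref{main0} supplies a natural cycle class map
$$
\cl^{\synt}_r:\R j'_*\Z/p^n(r)_{\M} \longrightarrow \se^{\prime}_n(r)_{\Nis}
$$
on $\overline{\sx}_{\Nis}$ which is a $p^N$-quasi-isomorphism in degrees $\leq r$. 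Taking hypercohomology, homotopy-inverse-limiting in $n$, tensoring with $\Q$, and truncating at degree $r$ then yields an isomorphism $H^i_{\M}(Y_{K'},\Qp(r))\stackrel{\sim}{\to} H^i_{\eet}(\overline{\sx},\se^{\prime}(r))_{\Q}$ for all $i\leq r$.

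Second, by the appendix the target canonically identifies with the syntomic cohomology $H^i_{\synt}(Y_{K'},\Qp(r))$ of \cite{NN}. Pulling back along the alteration $Y_{K'}\to X_{K'}$ and invoking rational alteration-descent (available for both motivic and syntomic cohomology) produces the desired cycle class map $\cl^{\synt}_{i,r}$ on $X_{K'}$ and the isomorphism for $i\leq r$. To descend from $K'$ to $K$, I observe that both sides carry compatible actions of $G:=\Gal(K'/K)$ and the cycle class map is $G$-equivariant; since $G$ is finite and we work with $\Qp$-vector spaces, the fixed-point functor is exact and the construction descends. Independence of the chosen alteration follows by dominating two alterations by a common third and using naturality.

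Finally, compatibility with the \'etale cycle class map is inherited from the analogous compatibility asserted in Theorem~\ref{main0} together with the appendix's identification of the Fontaine-Messing period map with the period map $\alpha^{NN}_{i,r}$. The main obstacle is precisely this identification: several distinct avatars of syntomic cohomology enter the argument---Fontaine-Messing syntomic-\'etale, convergent, rigid, and that of \cite{NN}---and one must verify that all period maps and cycle class maps match up under the appendix's comparison isomorphisms. This bookkeeping becomes manageable after inverting $p$ (so the constants $p^N$ cease to matter) and constitutes the technical heart of the proof.
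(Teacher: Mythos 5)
Your proposal takes a genuinely different route from the paper, and the chain of identifications at its heart is broken in two places.

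The paper does not invoke Theorem~\ref{main0}, alterations, or the appendix. It constructs $\cl^{\synt}_r$ directly: on a model, the \'etale cycle class map $\cl^{\eet}_r:\R j_*\Z/p^n(r)_M\to \tau_{\leq r}\R j_*\tau_{\leq r}\R\varepsilon_*\Z/p^n(r)'$ is a quasi-isomorphism by Beilinson-Lichtenbaum, and the period map $\alpha_r:\se'_n(r)_{\Nis}\to\tau_{\leq r}\R j_*\tau_{\leq r}\R\varepsilon_*\Z/p^n(r)'$ is a $p^N$-quasi-isomorphism by Theorem~\ref{keylemma11}; so rationally $\cl^{\synt}_r:=\alpha_r^{-1}\circ\cl^{\eet}_r$ makes sense. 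Since the $h$-sheafification of Fontaine--Messing syntomic cohomology over all models is precisely the definition of the syntomic cohomology of \cite{NN}, this construction $h$-sheafifies to the desired map---no alteration or Galois descent is needed, the $h$-formalism already encodes it. The identity $\alpha^{NN}_{i,r}=\alpha_{i,r}$ is not in the appendix; the paper cites \cite{New1} for it. Finally, the isomorphism for $i\leq r$ is immediate from the commutative triangle because both $\cl^{\eet}_{i,r}$ (Beilinson-Lichtenbaum) and $\alpha^{NN}_{i,r}$ (\cite[Theorem A]{NN}) are known isomorphisms in that range---this avoids tracing through explicit comparisons entirely.

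Your middle step misidentifies both ends of the chain. First, the hypercohomology of $\R j'_*\Z/p^n(r)_\M$ on $\overline{\sx}_{\Nis}$ computes $H^*_{\M}(Y,\Z/p^n(r))$ with $Y=\overline{\sx}_{\tr}=\overline{\sx}\setminus D_\infty$ a scheme \emph{over $\so_{K'}$}, not $H^*_{\M}(Y_{K'},\Z/p^n(r))$; these differ by the motivic localization term coming from the special fiber. Second, the appendix (via \cite[Prop.~3.18]{NN}) identifies NN syntomic cohomology of $Y_{K'}$ with $H^*(\overline{\sx}^{\times},\sss(r))_{\Q}$---the model with log structure along the special fiber---not with $H^*(\overline{\sx},\se'(r))_{\Q}$. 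By Corollary~\ref{mot2} these differ by $\R\Gamma(W\Omega^{r-1}_{X_0,\log})_{\Q}[-r]$. The two discrepancies are cancelled by Geisser--Levine applied to the special fiber (this is precisely the content of the paper's proof of Theorem~\ref{keylemma10}), but as written, neither link of your chain actually holds; you would have to re-derive the comparison performed there rather than appeal to it as a black box. (Also, minor: ``truncating at degree $r$'' after taking hypercohomology is ill-posed, and de Jong alone suffices without a preliminary Hironaka step.)
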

\begin{proof}
Consider the following diagram 
$$
\xymatrix{
\se^{\prime}_n(r)_{\Nis}\ar[r]^-{\alpha_r} & \tau_{\leq r}\R j_* \tau_{\leq r}\R \varepsilon_*\Z/p^n(r)^{\prime}\\
 & \R j_*\Z/p^n(r)_M^{\prime}\ar[u]^{\cl^{\eet}_{r}}_{\wr}\ar@{-->}[lu]^{\cl^{\synt}_{r}}
}
$$
The \'etale cycle class map $\cl^{\eet}_{r}$ is  a quasi-isomorphism by the  Beilinson -Lichtenbaum Conjecture (a corollary \cite{SV}, \cite{GL2} of the Bloch-Kato Conjecture proved by Voevodsky and Rost \cite{Wei}) and by \cite{GL} that give the quasi-isomorphism
$$
\Z/p^n(r)_M
\stackrel{\sim}{\to} \tau_{\leq r}\R \varepsilon_*\Z/p^n(r)
$$
and by the quasi-isomorphisms $j_*\Z/p^n(r)_M\stackrel{\sim}{\to}\R j_* \Z/p^n(r)_M$ and $\tau_{\leq r}\R j_*\Z/p^n(r)_M\stackrel{\sim}{\to} \R j_*\Z/p^n(r)_M$. 
Since, by Theorem  \ref{keylemma11}, the period map $\alpha_r$ is a $p^{N}$-quasi-isomorphism, we can define the syntomic cycle class map $\cl^{\synt}_{r}$
to make the above diagram commute. It induces the syntomic class map into syntomic cohomology 
$$
\cl^{\synt}_{r}: \R j_*\Z/p^n(r)_M\lomapr{\cl^{\synt}_{r}}  \se^{\prime}_n(r)_{\Nis}\to \sss^{\prime}_n(r)_{\Nis}\to \R\varepsilon_* \sss^{\prime}_n(r)_{\eet}
$$
By construction it is compatible with the \'etale cycle class map (via the map $\alpha_r$).

 Recall that the syntomic cohomology $H^i_{\synt}(X,\Q_p(r))$ is defined by $h$-sheafifying the (rational) Fontaine-Messing syntomic cohomology.
Everything being natural, the above construction of cycle classes $h$-sheafifies and gives the syntomic cycle class map
$$
\cl^{\synt}_{i,r}: H^i_{\M}(X,\Q_p(r))\to H^i_{\synt}(X,\Q_p(r)).
$$
For compatibility with the \'etale cycle class, it suffices  to check that $\alpha^{NN}_{i,r}=\alpha_{i,r}$ but this was done in \cite{New1}.

The last claim of the corollary follows from the fact that both $\alpha^{NN}_{i,r}$ and $\cl^{\eet}_{i,r}$ are isomorphisms for $i\leq r$ by \cite[Theorem A]{NN} and the Beilinson-Lichtenbaum Conjecture, respectively. 
\end{proof}
\subsection{Syntomic cohomology and logarithmic de Rham-Witt cohomology}
We will show in this section that adding logarithmic structure at the special fiber changes syntomic cohomology by logarithmic de Rham-Witt cohomology.
\begin{theorem}
\label{log-nonlog}
Let $X$ be a semistable scheme over $\so_K$ with a  smooth special fiber.  For a universal constant $N$, we have the following $p^{Nr}$-distinguished triangles of sheaves in the \'etale and Nisnevich topology of $X$, respectively.
\begin{align*}
 \sss^{\prime}_n(r)_{X} & \to \sss^{\prime}_n(r)_{X^{\times}}\to W_n\Omega^{r-1}_{X_0,\log}[-r],\\
 \sss^{\prime}_n(r)_{X,\Nis} & \to \sss^{\prime}_n(r)_{X^{\times},\Nis}\to W_n\Omega^{r-1}_{X_0,\log}[-r].
\end{align*}
Here we wrote $X^{\times}$ for the scheme $X$  with added  log-structure coming from the special fiber.
\end{theorem}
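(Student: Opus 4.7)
The plan is to construct the three terms and the connecting maps explicitly in the lifted setting, then identify the cofiber using the residue exact sequence for logarithmic differential forms combined with the Fontaine-Messing-Kato identification of syntomic sheaves in characteristic $p$ with logarithmic de Rham-Witt sheaves, using Theorem \ref{input1} from \cite{CN} to handle constants in the general twist range.

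First, the map $\sss^{\prime}_n(r)_X \to \sss^{\prime}_n(r)_{X^{\times}}$ is the canonical functorial map induced by the morphism of log-schemes $X^{\times} \to X$ (identity on underlying schemes, adding log-structure along the special fiber). To build the residue map I would work in the lifted situation of (\ref{lifted}): locally embed $X\hookrightarrow Z$ in a smooth $W(k)$-scheme with compatible Frobenius lifts, and take $Z^{\times}$ to carry additional log-structure along its special fiber $Z_0$. The residue exact sequence for log forms gives
\[
0 \to \Omega\kr_{Z_n} \to \Omega\kr_{Z_n^{\times}} \to i_*\Omega^{\bullet-1}_{Z_{0,n}} \to 0,
\]
and, tensoring with the PD-envelope data, a short exact sequence of complexes $0 \to S^{\prime}_n(r)_X \to S^{\prime}_n(r)_{X^{\times}} \to Q \to 0$. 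The key observation is that $Q$ is the shift $[-2]$ of the fiber of $J_{D_{0,n}}^{[r-1-\bullet]}\otimes\Omega\kr_{Z_{0,n}}\to \so_{D_{0,n}}\otimes\Omega\kr_{Z_{0,n}}$, with arrow $p^r-p\phi=p(p^{r-1}-\phi)$; the extra factor of $p$ arises because $\phi(d\log\varpi)=p\,d\log\varpi$, where $\varpi$ is the uniformizer generating the new log-structure.

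The second step is to identify $Q$ with $W_n\Omega^{r-1}_{X_0,\log}[-r]$ up to $p^{Nr}$. Up to this bound, $Q$ is the truncation $\tau_{\leq r-1}\sss^{\prime}_n(r-1)_{X_0}$ of the characteristic-$p$ syntomic complex shifted by $[-1]$, twisted by multiplication by $p$ on Frobenius. By the characteristic-$p$ computation of Fontaine-Messing-Kato (for $r\leq p-2$ it is an isomorphism on the nose, in general a $p^{Nr}$-quasi-isomorphism obtained by the same techniques as in \cite{CN}), one has
\[
\tau_{\le r-1}\sss^{\prime}_n(r-1)_{X_0}\xrightarrow{\sim_{p^{Nr}}} W_n\Omega^{r-1}_{X_0,\log}[-(r-1)],
\]
and the $p$-twist on Frobenius only modifies the identification by a bounded power of $p$ absorbed in $N$. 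An alternative, more direct route is to compose: Theorem \ref{input1} identifies $\sss^{\prime}_n(r)_{X^{\times}}$ with $\tau_{\leq r}i^*\R j_*\Z/p^n(r)'$ up to $p^N$, while the Kurihara-style exact sequence (generalized to arbitrary $r$ by the same CN techniques) gives $\sss^{\prime}_n(r)_X\sim_{p^{Nr}}\tau_{\leq r-1}i^*\R j_*\Z/p^n(r)'$; the difference is precisely $\sh^r(i^*\R j_*\Z/p^n(r)')[-r]\simeq W_n\Omega^{r-1}_{X_0,\log}[-r]$ by the classical Bloch-Kato description of $p$-adic nearby cycles.

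The Nisnevich version then follows by applying $\R\ve_*$ to the \'etale distinguished triangle, using that $W_n\Omega^{r-1}_{X_0,\log}$ satisfies $\R\ve_*W_n\Omega^{r-1}_{X_0,\log}\simeq W_n\Omega^{r-1}_{X_0,\log}$ via Geisser-Levine/Illusie. The main obstacle is controlling the constants in the cofiber identification: one must promote the classical Fontaine-Messing-Kato isomorphism (available only for $r\leq p-2$) to a $p^{Nr}$-quasi-isomorphism for arbitrary $r$, and track the twist by $p$ on Frobenius through the identification. The technical input making this step go through is the quantitative computation of syntomic cohomology in characteristic $p$ compatible with the bounds of \cite{CN}.
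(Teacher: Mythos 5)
Your first route is structurally close to the paper's, and the overall architecture (residue sequence in a lifted model, extra factor of $p$ on Frobenius from $\phi(d\log T)=p\,d\log T$, identification of the cofiber with log de Rham--Witt) is correct. However, there are two genuine gaps, and the alternative route is circular.

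\textbf{Gap 1: the key de Rham--Witt identification is not a citation.} You assert that
$\tau_{\le r-1}\sss'_n(r-1)_{X_0}\simeq W_n\Omega^{r-1}_{X_0,\log}[-(r-1)]$ up to $p^{Nr}$
``by the characteristic-$p$ computation of Fontaine--Messing--Kato... in general a $p^{Nr}$-quasi-isomorphism obtained by the same techniques as in [CN].'' This is precisely the technical heart of the paper's proof and is not available off the shelf for all $r$ with a universal $N$: the paper spends an entire lemma proving, directly, that $[\Omega\kr_{S,n}\xrightarrow{p^r-p^{\bullet+1}\phi_{\bullet}}\Omega\kr_{S,n}]\simeq W_n\Omega^{r-1}_{R_0,\log}[-r+1]$ is a $p^{4r}$-quasi-isomorphism. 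The argument compares the lifted de Rham complex with the de Rham--Witt complex via the Frobenius lift, isolates the $\phi$-eigenspace term by term (showing $1-p^{t-s}\F$ is invertible for $t>s$, that $p^{s-t}-\F$ is a $p$-isomorphism for $t<s$ using $V$-adic arguments, and handling the critical $t=s-1$ cocycle case via Bloch--Esnault--Kerz), and finally invokes the CTSS/Lorenzon exact sequence to identify the kernel of $1-\F$ with $W_n\Omega^s_{R_0,\log}$. You need to actually carry out this computation (or locate a quantitative reference for it, which I do not believe exists). Also, a small inaccuracy: after taking the residue along $T=0$, the PD-filtration on the ambient ring trivializes (because $F^s S_K^{[1]}/T \cong \so_F\oplus(\text{$p$-torsion})$), so the cofiber has the shape $[\sa_{\crr,n}\to\sa_{\crr,n}]$ with the full crystalline sheaf in both spots, \emph{not} $[J^{[r-1-\bullet]}_{D_{0,n}}\otimes\Omega\kr\to\so_{D_{0,n}}\otimes\Omega\kr]$. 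This only changes bounded powers of $p$, but it affects the bookkeeping.

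\textbf{Gap 2: the Nisnevich step is wrong as stated.} You write that $\R\varepsilon_*W_n\Omega^{r-1}_{X_0,\log}\simeq W_n\Omega^{r-1}_{X_0,\log}$, but this fails in positive degrees; only $\tau_{\leq 0}\R\varepsilon_*W_n\Omega^{r-1}_{X_0,\log}\simeq W_n\Omega^{r-1}_{X_0,\log}$ holds (by Kato). Moreover, the Nisnevich complexes are defined as $\tau_{\leq r}\R\varepsilon_*(\cdot)$, and truncation does not automatically preserve distinguished triangles. After applying $\R\varepsilon_*$ and $\tau_{\leq r}$ you need an extra argument---the paper shows that $\sh^r(\sss'_n(r)_{X^\times,\Nis})\to W_n\Omega^{r-1}_{X_0,\log}$ is $p^{Nr}$-surjective by explicitly constructing a Frobenius-equivariant section of the residue map. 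Your proposal omits this entirely.

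\textbf{The alternative route is circular.} The statement ``the Kurihara-style exact sequence (generalized to arbitrary $r$ by the same CN techniques) gives $\sss'_n(r)_X\sim_{p^{Nr}}\tau_{\leq r-1}i^*\R j_*\Z/p^n(r)'$'' is not an available input: [CN] concerns $X^\times$, not $X$, and the general-$r$ Kurihara-type triangle for $X$ without log structure at the special fiber is exactly what the present theorem (combined with [CN]) is being used to \emph{derive}, cf. the corollary following it in the paper. You cannot use it to prove the theorem.
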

\begin{proof}
After setting up the local coordinates, we do, as an example, computations in dimension zero, where it becomes clear how to define the map to logarithmic de Rham-Witt differentials. Then we lift this computations to higher dimensions and globalize. 

{\em (1) Choice of local coordinates. }
   To construct the first distinguished triangle, we start with local computations. Let $d$ be a positive integer. Let  $R^0_K:=\so_K\{X_1^{\pm 1},\ldots, X_d^{\pm 1}\}$ be the $p$-adic completion of $\so_K[X_1^{\pm 1},\ldots, X_d^{\pm 1}]$. Let $R$ be the $p$-adic completion of an  \'etale algebra over $R^0_K$. Let $R^0_T$ be the $(p,T)$-adic completion of 
   $W(k)[T,X_1^{\pm 1},\ldots, X_d^{\pm 1}]$; take the map $R^0_T\mapsto R^0_K$, $T\mapsto \varpi$,  and take the (formally) \'etale lifting $R_T$ of $R$ to $R^0_T$.  Let $S_R$ be the $p$-adically complete PD-envelope of $R$ in $R_T$ equipped with the PD-filtration $F^rS_R$. We will write $S_K:=S_{\so_K}$. We have $S_R=R_T\widehat{\otimes} _{W(k)\{T\}}S_K$ with filtration $F^rS_R:=R_T\widehat{\otimes} _{W(k)\{T\}}F^rS_K$.   Let  $R^0:=W(k)\{X_1^{\pm 1},\ldots, X_d^{\pm 1}\}$ and let $R_{T,0}:=R_T/T$.

   We have the following diagram of maps (the right diagram is obtained by reducing the rings modulo $T$)
 \begin{equation}
 \label{diagram}
 \xymatrix{
  & \Spf S_R\ar[rd] & \\
 \Spf R\ar[d]\ar@{^{(}->}[ru]\ar@{^{(}->}[rr]\ar@{}[rr] & &\Spf R_T\ar[d]\\
 \Spf R^0_K\ar[d]\ar@{^{(}->}[rr] & & \Spf R^0_T\ar[d]\\
 \Spf \so_K \ar@{^{(}->}[rr]& & \Spf \so_F\{T\}
 }\quad \quad 
 \xymatrix{\\
 \Spec R_0\ar[d]\ar@{^{(}->}[r] & \Spf R_{T,0}\ar[d]\\
 \Spec R^0_{K,0}\ar[d]\ar@{^{(}->}[r] & \Spf R^{0}\ar[d]\\
 \Spec k  \ar@{^{(}->}[r] & \Spf \so_F
 }
 \end{equation}
Equip $R^0$ with Frobenius $\phi_{R^0}: X_i^{\pm1}\mapsto X_i^{\pm p}$. Equip $R^0_T$ with  Frobenius $\phi_{R^0_T}$ 
 compatible with $\phi_{S_K}$ ($T\mapsto T^p$) and with $\phi_{R^0}$, and equip $R_T$ with a Frobenius $\phi_{R_T}$ compatible with $\phi_{R^0_T}$. We will simply write $\phi$ for Frobenius if the domain of action is understood.  

    Set $\Omega_{S_R}:=S_R\otimes_{R_T}\Omega_{R_T}$. For $r\in \N$, we filter the de Rham complex $\Omega_{S_R}\kr$ by subcomplexes
    $$F^r\Omega_{S_R}\kr :=    F^rS_R\to F^{r-1}S_R\otimes_{R_T}\Omega_{R_T}\to F^{r-2}S_R\otimes_{R_T}\Omega^{2}_{R_T} \to\ldots   $$
 We define {\em the syntomic complex} of $R$  as
\begin{equation}
\label{kol1}
    S(R,r):=\Cone(F^r\Omega_{S_R}\kr\lomapr{p^r-\phi}\Omega_{S_R}\kr)[-1]
\end{equation}
 Set $\Omega_{ S^{\times}_R}:=S_R\otimes_{R_T}\Omega_{R^{\times}_T}$, where ${R^{\times}_T}$ is the ring $R_T$ with log-structure induced by $T$.
  We define the {\em log-syntomic complex} of $R$ as 
\begin{equation}
    \label{kol2}
    S(R^{\times},r):=\Cone(F^r\Omega_{S^{\times}_R}\kr\lomapr{p^r-\phi}\Omega_{S^{\times}_R}\kr)[-1].
\end{equation}

  For $n\in\N$, we define the syntomic and log-syntomic complexes modulo $p^n$ as  $S(R,r)_n:=S(R,r)\otimes_{\Z}\Z/p^n$, $S(R^{\times},r):=S(R^{\times},r)\otimes_{\Z}\Z/p^n$, respectively. 
In the case when $R$ is the $p$-adic completion of an \'etale algebra over $\so_K[X_1^{\pm 1},\cdots,X_d^{\pm 1}]$, we have 
\begin{align*}
S^{\prime}_n(r)_R   & =S(R,r)_n,\quad S^{\prime}_n(r)_{R^{\times}}=S(R^{\times},r)_n;\\
\holim_nS^{\prime}_n(r)_R & =S(R,r),\quad \holim_nS^{\prime}_n(r)_{R^{\times}}=S(R^{\times},r).
\end{align*}

 We would like  to separate the arithmetic and the geometric variables. Specifically,     we remove the differentials connected with the variable $T$ by setting $\Omega_{S^{\prime}_R}:=S_R\otimes_{R^0}\Omega_{R^0}$. Since  $\Omega_{W(k)[T]}=W(k)[T]dT$ we can dispose of this module of differentials by writing $df$ as $\partial f dT$ and   we can rewrite the above syntomic complex as the following homotopy limit
$$
S(R,r)=\left[\begin{aligned}\xymatrix@C=60pt{
F^r\Omega_{S^{\prime}_R}\kr \ar[d]^{\partial}\ar[r]^{p^r-p\kr\phi_{{\scriptscriptstyle\bullet}}} & \Omega_{S^{\prime}_R}\kr\ar[d]^{\partial}\\
F^{r-1}\Omega_{S^{\prime}_R}^{{\scriptscriptstyle\bullet}}\ar[r]^-{p^r-p^{{\scriptscriptstyle\bullet}+1}T^{p-1}\phi_{{\scriptscriptstyle\bullet}}} & \Omega_{S^{\prime}_R}^{{\scriptscriptstyle\bullet} }
}\end{aligned}\right]
$$
Here the map $\phi_{{\scriptscriptstyle\bullet}}:\Omega_{S^{\prime}_R}\kr\to \Omega_{S^{\prime}_R}\kr$ sends $\omega\in \Omega^k_{S^{\prime}_R}$ to $(\phi/p^k)(\omega)$.
  By adding logarithmic differentials $dT/T$ along the special fiber, we get  the following {\em log-syntomic complex}  
$$
S(R^{\times},r)=\left[\begin{aligned}\xymatrix@C=45pt{
 F^r\Omega_{S^{\prime}_R}\kr \ar[d]^{T\partial}\ar[r]^{p^r-p\kr\phi_{{\scriptscriptstyle\bullet}}} & \Omega_{S^{\prime}_R}\kr\ar[d]^{T\partial}\\
F^{r-1}\Omega_{S^{\prime}_R}^{{\scriptscriptstyle\bullet}}\ar[r]^-{p^r-p^{{\scriptscriptstyle\bullet}+1}\phi_{{\scriptscriptstyle\bullet}}} & \Omega_{S^{\prime}_R}^{{\scriptscriptstyle\bullet} }
}\end{aligned}\right]
$$
{\em (2) Dimension $0$.}
For $R=\so_K$, we obtain the following proposition.
\begin{proposition}
\label{ex1}
Let  $n\geq 1$. We have the following $p^{15}$-distinguished triangle of sheaves in the \'etale topology of $\Spec k$
$$
 S(\so_K,1)_n\to
S(\so_K^{\times},1)_n\to \Z/p^n[-1].
$$
For $r\neq 1$, the natural map $S(\so_K,r)_n\to
S(\so_K^{\times},r)_n$ is a $p^{15r}$-quasi-isomorphism.
\end{proposition}
\begin{proof}
We have the following two syntomic complexes.
\begin{align*}
S(\so_K,r):\quad & F^rS_K   \verylomapr{(\partial,p^r-\phi)}F^{r-1}S_K\oplus S_K\veryverylomapr{-(p^r-pT^{p-1}\phi)+\partial}S_K\\
S(\so_K^{\times},r):\quad  & F^rS_K   \verylomapr{(T\partial,p^r-\phi)}F^{r-1}S_K\oplus S_K\veryverylomapr{-(p^r-p\phi)+T\partial}S_K
\end{align*} 
  The residue map: $\Omega^1_{{\log},S_K^{[1]}}\to \so_F$ induces
the following   sequence of   complexes:
$$0\to S(\so_K^{[1]},r)\to S_{\log}(\so_K^{[1]},r)\to
\big[0\to\xymatrix{\so_F\ar[r]^-{-(p^{r}-p\varphi)}&\so_F}\big]\to 0,$$
where $R^{[1]}$ is "the ring of analytic functions over $F$ with integral values on the disk $v_p(T)\geq 1/e$" defined in \cite[Remark 2.2]{CN} and we define its syntomic complexes by analogous formulas to (\ref{kol1}) and (\ref{kol2}) replacing $S_R$ by $R^{[1]}$. We have the natural maps $S(\so_K,r)\to S(\so_K^{[1]},r)$ and $S(\so_K^{\times},r)\to S_{\log}(\so_K^{[1]},r)$ that are $p^{6r}$-quasi-isomorphisms \cite[Prop. 3.3]{CN}.
The above  sequence is  $p$-exact because $F^{s}S_K^{[1]}=\frac{E^s}{p^s}S_K^{[1]}$, for $E$ the minimal polynomial of $\varpi$ over $F$, 
which implies that  $F^s\Omega^1_{{\log},S_K^{[1]}}/F^s\Omega^1_{S_K^{[1]}}\cong
S_K^{[1]}/TS_K^{[1]}$ and  $S_K^{[1]}/TS_K^{[1]}=\so_F\oplus M$, where $M$ is $p$-torsion.

  Modulo $p^n$, we have $\so_{F,n}=W_n(k)$ and the exact sequence in the \'etale topology of $\Spec k$
$$
0\to \Z/p^n\to \so_{F,n}\lomapr{1-\phi}\so_{F,n}\to 0.
$$
For $r=0$, the map $\so_{F,n}\lomapr{p^r-p\phi}\so_{F,n}$ is an isomorphism since $1-p\phi$ is invertible. For $r >1$, the map $\so_{F,n}\lomapr{p^{r-1}-\phi}\so_{F,n}$ is an isomorphism as well since both $\phi$ and $p^{r-1}\phi^{-1}-1$ are invertible. This proves our proposition.
\end{proof}
 

{\em (3) Local computations in higher dimensions.}
  The computations in the above example generalize to any ring $R$. 
  \begin{lemma}
  There is a $p^{12r}$-distinguished triangle  in the \'etale topology of $\Spec R_0$
  $$S(R,r)_n\to S(R^{\times},r)_n\to W_n\Omega^{r-1}_{R_0,\log}[-r]
  $$ 
  \end{lemma}
  \begin{proof}
  First we  pass  from $S_R$ to $R^{[1]}$ (via a $p^{6r}$-quasi-isomorphism). Then we compute as in the proof of Proposition \ref{ex1} and obtain  the following  $p$-distinguished triangle 
$$ S(R^{[1]},r)\to S_{\log}(R^{[1]},r)\to
\big[\Omega_{R_{T,0}}^{{\scriptscriptstyle\bullet}}\veryverylomapr{p^r-p^{{\scriptscriptstyle\bullet}+1}\phi_{{\scriptscriptstyle\bullet}}}  \Omega_{R_{T,0}}^{{\scriptscriptstyle\bullet} }\big][-1].$$
 We note that  the complex $\Omega\kr_{R_{T,0}}$ computes the crystalline cohomology of $R_0$ over $W(k)$.

    Set $S:=R_{T,0}$. We claim that there exists a $p^{4r}$-quasi-isomorphism
    on the \'etale site of $\Spec R_0$ $$\big[\Omega_{S,n}^{{\scriptscriptstyle\bullet}}
    \veryverylomapr{p^r-p^{{\scriptscriptstyle\bullet}+1}\phi_{{\scriptscriptstyle\bullet}}}  \Omega_{S,n}^{{\scriptscriptstyle\bullet} }\big]\simeq W_n\Omega^{r-1}_{R_0,\log}[-r+1].$$
 Indeed, for $r=0$, the complex $\big[\Omega_{S,n}^{{\scriptscriptstyle\bullet}}
    \veryverylomapr{1-p^{{\scriptscriptstyle\bullet}+1}\phi_{{\scriptscriptstyle\bullet}}}  \Omega_{S,n}^{{\scriptscriptstyle\bullet} }\big]$  is acyclic because the map $ 1-p^{{\scriptscriptstyle\bullet}+1}$ is  invertible.
 Assume thus that $r\geq 1$ and take $s=r-1$. Set $$\hk(S,s)_n:=\big[\Omega_{S,n}^{{\scriptscriptstyle\bullet}}
    \veryverylomapr{p^s-p^{{\scriptscriptstyle\bullet}}\phi_{{\scriptscriptstyle\bullet}}}  \Omega_{S,n}^{{\scriptscriptstyle\bullet} }\big].
 $$
 This complex is $p^2$-quasi-isomorphic to the complex 
 $\big[\Omega_{S,n}^{{\scriptscriptstyle\bullet}}
    \veryverylomapr{p^r-p^{{\scriptscriptstyle\bullet}+1}\phi_{{\scriptscriptstyle\bullet}}}  \Omega_{S,n}^{{\scriptscriptstyle\bullet} }\big].$
 Using the global Frobenius lift on $S$ we get the following commutative diagram
 $$
 \xymatrix{\Omega_{S,n}^{{\scriptscriptstyle\bullet}}\ar[d]^{\Phi(\phi)}
    \ar[r]^-{p^s-p^{{\scriptscriptstyle\bullet}}\phi_{{\scriptscriptstyle\bullet}}} &   \Omega_{S,n}^{{\scriptscriptstyle\bullet} }\ar[d]^{\Phi(\phi)}\\
W_n\Omega\kr_{R_0}\ar[r]^-{p^s-p\kr\F} & W_n\Omega\kr_{R_0}/dV^{n-1}\Omega^{{\scriptscriptstyle\bullet}-1}_{R_0}
}
$$
We note here that the de Rham-Witt Frobenius $\F:W_{n+1}\Omega\kr_{R_0}\to W_n\Omega\kr_{R_0}$ and that $\F: \fil^nW_{n+1}\Omega\kr_{R_0}=V^n\Omega\kr_{R_0}+dV^{n}\Omega^{\scriptscriptstyle\bullet-1}_{R_0}\to dV^{n-1}\Omega^{\scriptscriptstyle\bullet-1}_{R_0}$. Hence $\F$ factorizes as in the above diagram. Moreover, since  $pdV^{n-1}\Omega^{*}_{R_0}=0$, we get the induced map  $p\F: W_n\Omega\kr_{R_0}\to W_n\Omega\kr_{R_0}$.

  The first vertical arrow in the above diagram is a quasi-isomorphism. The second one is a $p$-quasi-isomorphism since  $pdV^{n-1}\Omega^{*}_{R_0}=0$. Hence the complex $\hk(S,s)_n$ is $p^2$-quasi-isomorphic to the complex $[W_n\Omega\kr_{R_0}\verylomapr{p^s-p\kr\F}  W_n\Omega\kr_{R_0}/dV^{n-1}\Omega^{{\scriptscriptstyle\bullet}-1}_{R_0}]$.
We list the following properties of the latter complex.
\begin{enumerate}
\item For $t>s$, the map $W_n\Omega^t_{R_0}\verylomapr{1-p^{t-s}\F}W_n\Omega^t_{R_0}$ is an isomorphism (since $1-p^{t-s}\F$ is invertible).
\item For $t<s$, the map $$W_n\Omega^t_{R_0} \lomapr{p^{s-t}-\F}
W_n\Omega^t_{R_0}/dV^{n-1}\Omega^{t -1}_{R_0}$$ is a $p$-isomorphism. Indeed, for $p$-surjectivity, it suffices to note that $(p^{s-t}-\F)(V\alpha)=p^{s-t}V\alpha-p\alpha$, for $\alpha\in W_n\Omega^t_{R_0}/dV^{n-1}\Omega^{t-1}_{R_0}$, $t\leq s-1$. For $p$-injectivity, we note that if $(p^{s-t}-\F)(\alpha)=0$ for $\alpha\in W_n\Omega^t_{R_0}$ then $V(p^{s-t}-\F)(\alpha)=p^{s-t}V\alpha-p\alpha=0$. Hence $p^{s-t-1}V\alpha=\alpha$  which implies that $p^{n(s-t-1)}V^{n}\alpha=\alpha$. Hence $\alpha =0$.
\item The map $$ZW_n\Omega^{s-1}_{R_0} \lomapr{p-\F}
ZW_n\Omega^{s-1}_{R_0}/dV^{n-1}\Omega^{s-2}_{R_0}$$ is an $p^3$-isomorphism. For $p$-injectivity we use the point above. For $p^3$-surjectivity, we note that, for $\alpha\in ZW_n\Omega^{s-1}_{R_0} $ such that $d\alpha=0$ we have
$$
p\alpha=-(p-\F)\beta, \quad \beta=V\alpha+V^2\alpha+V^3\alpha\cdots\in VW_{n-1}\Omega^{s-1}_{R_0},
$$
and $p(1-\F)d\beta=0$. 
By \cite[Lemma 4.3]{BEK}, this implies that $pd\beta=0$.
\item There is an exact sequence 
$$
0\to W_n\Omega^s_{R_0,\log}\to W_n\Omega^s_{R_0}\lomapr{1-\F}W_n\Omega^s_{R_0}/dV^{n-1}\Omega^{s-1}_{R_0}\to 0
$$
 in the \'etale topology of $\Spec R_0$ \cite[Lemma 1.2]{CTSS}, \cite[Prop. 2.13]{Lor}. In the Nisnevich topology it is still exact on the left and in the middle.
\end{enumerate}
The above implies that there is a natural map $$W_n\Omega^s_{R_0,\log}[-s]\to [W_n\Omega\kr_{R_0}\verylomapr{p^s-p^{\scriptscriptstyle\bullet}\F}W_n\Omega\kr_{R_0}/dV^{n-1}\Omega^{\scriptscriptstyle\bullet-1}_{R_0}]$$ and that it is a $p^{4s}$-quasi-isomorphism in the \'etale topology of $\Spec R_0$, as wanted.
\end{proof}

{\em (4) Globalization.}
  The above local computations can be globalized in the following way. We note that we have actually proved above that we have the following $p^{4s}$-quasi-isomorphisms of sheaves on the \'etale site of $X_0$\footnote{The notation is slightly abusive here but we hope that this will not lead to confusion.}
$$
 W_n\Omega^s_{X_0,\log}[-s]\to [W_n\Omega\kr_{X_0}\lomapr{p^s-p\kr\F}W_n\Omega\kr_{X_0}/dV^n\Omega^{s-1}_{X_0}]\leftarrow{}
 [\sa_{\crr,n}\verylomapr{p^s-\phi}\sa_{\crr,n}],
$$
where $\sa_{\crr,n}$ is the sheaf $(U\to X_0)\mapsto \R\Gamma_{\crr}(U/W_n(k))$. 
The second quasi-isomorphism is \cite[Sec. II.1]{Il}. 
It suffices thus to construct a map
$$
\sss^{\prime}_n(r)_{X^{\times}}\to   [\sa_{\crr,n}\verylomapr{p^r-p
\phi}\sa_{\crr,n}][-1]
$$
and to show that the triangle 
$$
\sss^{\prime}_n(r)_X\to \sss^{\prime}_n(r)_{X^{\times}}\to   [\sa_{\crr,n}\verylomapr{p^r-p
\phi}\sa_{\crr,n}][-1]
$$
is $p^{6r}$-distinguished. 

   For that, consider the following two diagrams of compatible coordinate systems (localize on $X$ if necessary to get $X=\Spec A$).
\begin{equation*}
 \xymatrix{
  & \Spec D_{T,n}\ar[rd] & \\
 \Spec A_n\ar[d]\ar@{^{(}->}[ru]\ar@{^{(}->}[rr]\ar@{}[rr] & &\Spec B_{T,n}\ar[d]\\
 \Spec \so_{K,n}\ar[d]\ar@{^{(}->}[rr] & & \Spec \so_{F,n}[T]\ar[lld]\\
 \Spec \so_{F,n} 
 }\quad \quad 
 \xymatrix{  & \Spec D_{n}\ar[rd] & \\
 \Spec A_0\ar[d] \ar@{^{(}->}[ru]\ar@{^{(}->}[rr] & & \Spec B_{n}\ar[d]\\
 \Spec k\ar[d]\ar@{^{(}->}[rr] & & \Spec \so_{F,n}\ar@{=}[lld]\\
 \Spec \so_{F,n}
 }
 \end{equation*}
Here $B_{T,n}$ is smooth over $\so_{F,n}[T]$ and the hooked arrows are closed embeddings. We equip both rings with the log-structure associated to $T$. The right diagram is obtained by "reducing modulo $T$" the left diagram. It follows that the residue map $\Omega_{B^{\times}_{{T,n}}}\to\so_{B_n}$  induces a map $\Omega\kr_{D^{\times}_{{T,n}}}\to\Omega^{\scriptscriptstyle\bullet -1}_{D_n}$ (we note that the Frobenius $\phi$  on the domain is compatible with $p\phi$ on the target) and the sequence
\begin{equation}
\Omega\kr_{D_{{T,n}}}\to  \Omega\kr_{D^{\times}_{{T,n}}}\to\Omega^{\scriptscriptstyle\bullet -1}_{D_n}
\end{equation}
is exact. 
 These constructions glue in the usual way and we obtain a map $\sj_{X^{\times}_n}^{[r]}\to \sa_{\crr,n}[-1]$ and a sequence of complexes  of sheaves on the \'etale site of $X_0$
\begin{equation}
\label{global}
\sj^{[r]}_{X_n}\to \sj_{X^{\times}_n}^{[r]}\to \sa_{\crr,n}[-1],
\end{equation} 
where we wrote $\sj^{[r]}_{X^*_n}$ for the sheaf $(U\to X_n)\mapsto \R\Gamma_{\crr}(U, \sj^{[r]}_{X^*_n})$. 
Hence a sequence 
\begin{equation}
\label{global1}
 \sss^{\prime}_n(r)_X \to 
  \sss^{\prime}_n(r)_{X^{\times}} \to [\sa_{\crr,n}\lomapr{p^r-p\phi}\sa_{\crr,n}][-1]
\end{equation}
It 
is a $p^{6r}$-distinguished triangle:  this can be checked locally where we   can pass to the more convenient coordinate system from (\ref{diagram}) and use the computations we have done in the proof of Proposition \ref{ex1}. This concludes the construction of the first distinguished triangle of  our theorem.

    For the second triangle in the theorem, take the first triangle and push it down to the Nisnevich site. Since $\tau_{\leq 0}\R\epsilon_*W_n\Omega^{r-1}_{X_0,\log}\simeq W_n\Omega^{r-1}_{X_0,\log}$ \cite{K0}, it suffices to check that the map
$\sh^r(\sss^{\prime}_n(r)_{X^{\times},\Nis})\to W_n\Omega^{r-1}_{X_0,\log}$ is $p^{Nr}$-surjective, for a universal constant $N$. For that, Zariski localize and  consider the following commutative diagram
    $$
    \xymatrix{
    & & \Spf R_T\ar@/_17pt/[dl]_s\ar@/^17pt/[ddl]\\
    \Spec R_0\ar[d] \ar@^{{(}->}[r] & \Spf R_{T,0}\ar[d]\ar@^{{(}->}[ru] ^i\\
    \Spec k \ar@^{{(}->}[r] & \Spf \so_F
    }
    $$
  Here the map $s$ was chosen to commute with Frobenius and so that $si=\id$. We can use it to construct a section of the residue map
 $$
 S_{\log}(R^{[1]},r)\to
\big[\Omega_{R_{T,0}}^{{\scriptscriptstyle\bullet}}\veryverylomapr{p^r-p^{{\scriptscriptstyle\bullet}+1}\phi_{{\scriptscriptstyle\bullet}}}  \Omega_{R_{T,0}}^{{\scriptscriptstyle\bullet} }\big][-1].
 $$
 It follows that the map $H^i(S_{\log}(R^{[1]},r)_n)\to H^{i-1}(\big[\Omega_{R_{T,0}}^{{\scriptscriptstyle\bullet}}\veryverylomapr{p^r-p^{{\scriptscriptstyle\bullet}+1}\phi_{{\scriptscriptstyle\bullet}}}  \Omega_{R_{T,0}}^{{\scriptscriptstyle\bullet} }\big]_n)$ is surjective. Since, by the above computations, the Nisnevich sheaves 
 $\sh^{r-1}(\sa_{\crr,n}\lomapr{p^{r-1}-\phi}\sa_{\crr,n})$ and $W_n\Omega^{r-1}_{X_0,\log}$ are $p^{Nr}$-quasi-isomorphic, for a universal constant $N$, we are done. 
\end{proof}
\begin{corollary}
\label{needed0}
Let $X$ be a semistable scheme over $\so_K$ with a  smooth special fiber.  For a constant $N=N(p,e,r)$, we have the following natural $p^{N}$-quasi-isomorphism ($*$ denotes the   \'etale or the  Nisnevich topology of $X$)
$$
 \sss^{\prime}_n(r)_{X,*}\oplus W_n\Omega^{r-1}_{X_0,\log}[-r]\to \sss^{\prime}_n(r)_{X^{\times},*}
$$  
\end{corollary}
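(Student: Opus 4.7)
The plan is to split the $p^{Nr}$-distinguished triangle of Theorem \ref{log-nonlog}, that is, to construct a section $s: W_n\Omega^{r-1}_{X_0,\log}[-r] \to \sss^{\prime}_n(r)_{X^{\times},*}$ of the residue map, well-defined up to a power of $p$ bounded by a universal constant. Once such an $s$ is available, the composite
$$\sss^{\prime}_n(r)_{X,*}\oplus W_n\Omega^{r-1}_{X_0,\log}[-r]\xrightarrow{(\iota,\,s)}\sss^{\prime}_n(r)_{X^{\times},*},$$
with $\iota$ the natural map from Theorem \ref{log-nonlog}, is a $p^N$-quasi-isomorphism (for a constant $N$ as in the corollary) by a standard 3-lemma argument applied to the long exact cohomology sequence of the distinguished triangle.

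The section $s$ will be built from the syntomic symbol map (\ref{symbols}) using the canonical log-section $\varpi \in M^{\gp}_{X^{\times},n}$ coming from the special fiber. First, I would work Zariski-locally and, given a lift $\tilde a_i\in\so_X^*$ of a unit $\bar a_i\in\so_{X_0}^*$, define
$$s_{\mathrm{sym}}: \bigotimes^{r-1}\so_{X_0}^* \longrightarrow \sh^r(\sss^{\prime}_n(r)_{X^{\times}}),\qquad
\bar a_1\otimes\cdots\otimes \bar a_{r-1}\mapsto \{\varpi,\tilde a_1,\ldots,\tilde a_{r-1}\},$$
via the Kato-Tsuji symbol. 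Independence of the lifts $\tilde a_i$, together with the Steinberg relations $\{\tilde a,1-\tilde a\}=0$ and antisymmetry, then have to be checked modulo $p^N$; the first is immediate because $(1+p\so_X)^{\otimes r}\to \sh^r(\sss^{\prime}_n(r)_{X^\times})$ is killed by a power of $p$ from the explicit Chern class formulas, and the Steinberg relation is inherited from the corresponding one satisfied by Kato's log-syntomic symbol. This gives a factorization through the dlog generators of $W_n\Omega^{r-1}_{X_0,\log}$, and one then verifies via Kato's theorem (\cite{K0}) that the resulting map descends to a well-defined map of complexes
$$s: i_*W_n\Omega^{r-1}_{X_0,\log}[-r]\longrightarrow \sss^{\prime}_n(r)_{X^{\times},*}.$$

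Next, one has to verify that the composite
$$i_*W_n\Omega^{r-1}_{X_0,\log}[-r]\xrightarrow{s}\sss^{\prime}_n(r)_{X^{\times},*}\xrightarrow{\mathrm{res}} i_*W_n\Omega^{r-1}_{X_0,\log}[-r]$$
equals $p^N$ times the identity for a universal $N$. This reduces, by the local trivialization used in the proof of Theorem \ref{log-nonlog}, to a computation in the coordinates of the diagram (\ref{diagram}): the residue map is induced by $\Omega^{\kr}_{D^{\times}_{T,n}}\to \Omega^{\scriptscriptstyle\bullet-1}_{D_n}$, $\omega\wedge d\log T\mapsto \omega$, and the symbol $\{\varpi,\tilde a_1,\ldots,\tilde a_{r-1}\}$ maps under the explicit dlog formula to a form whose $d\log T$-component is $d\log\tilde a_1\wedge\cdots\wedge d\log\tilde a_{r-1}$, which reduces mod $T$ and mod $p^n$ to the element of $W_n\Omega^{r-1}_{X_0,\log}$ one starts with, up to a universal power of $p$ arising from the $p^r$-twist between $\sss^{\prime}$ and $\sss$ and from the de Rham-Witt Frobenius comparison used in the proof of Theorem \ref{log-nonlog}.

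The main obstacle will be the coherence check in the second paragraph: pinning down that the symbol map $s_{\mathrm{sym}}$ really does factor through $W_n\Omega^{r-1}_{X_0,\log}$ (as opposed to through the na\"ive Milnor-type quotient) in the integral, log-syntomic setting, and that the constant $N$ it introduces is of the shape $N(e,p,r)$ required by the statement. This is handled using Kato's identification of $W_n\Omega^{s}_{X_0,\log}$ with the image of the dlog map \cite{K0} together with the local explicit formulas from Section 2.1.3, after which the composite computation reduces to the dimension-zero case already established by Proposition \ref{ex1}.
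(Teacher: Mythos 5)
Your strategy — split the $p^{Nr}$-distinguished triangle of Theorem \ref{log-nonlog} by producing a $p^N$-section of the residue map — is exactly the paper's strategy, but the way you build the section is genuinely different. The paper does not touch symbols at all: it exploits the commutative diagram
$$
\xymatrix{
& \R\Gamma_{\crr}(X_1/W_n(k))\ar[d]^{i^*}\ar[dl]_{\wedge \dlog T}\\
\R\Gamma_{\crr}(X^{\times}_1/W_n(k))[1]\ar[r]^-{\res_T} & \R\Gamma_{\crr}(X_0/W_n(k))
}
$$
together with the key input from \cite[Remark 5.9]{CN} that $i^*$ is a $p^{N(p,e)}$-quasi-isomorphism on the $\phi=p^{r-1}$-eigenspaces. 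Inverting $i^*$ and composing with $\wedge\dlog T$ gives a map of complexes $\R\Gamma_{\crr}(X_0/W_n(k))^{\phi=p^{r-1}}\to\sss^{\prime}_n(r)_{X^{\times}}[1]$, which, after passing through the identification (\ref{needed1}), is the desired section; naturality (dependence only on $\varpi$) makes it sheafify. That construction buys two things your route does not: it produces a \emph{strict} map of complexes (not just a map on top cohomology sheaves that is then promoted), and it completely sidesteps any Milnor-type relations.

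Your symbol construction is a reasonable idea, but several pieces of it are not routine. First, the kernel of $\so_X^*\to\so_{X_0}^*$ is $1+\varpi\so_X$, not $1+p\so_X$; showing that the symbol of an element of $1+\varpi\so_X$ dies in $\sh^r(\sss^{\prime}_n(r)_{X^{\times}})$ up to a controlled power of $p$ is a genuine computation, not ``immediate from the Chern class formulas,'' and the resulting power of $p$ must be tracked since it enters $N$. Second, the Steinberg relation for the Kato--Tsuji symbol with source $(M^{\gp}_{X,n})^{\otimes r}$ (not a Milnor $K$-group) is not a formal inheritance; the cleanest way to get it would be via compatibility with the \'etale symbol, which at the integral level again quotes the $\cite{CN}$ comparison — at which point you have reintroduced the very input the paper uses, in a more roundabout way. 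Third, and most seriously, the residue map in Theorem \ref{log-nonlog} is not the na\"\i ve ``take the $\dlog T$-coefficient'': it is that followed by the $p^{4r}$-identification (\ref{needed1}) with $W_n\Omega^{r-1}_{X_0,\log}$, which passes through the de Rham--Witt Frobenius comparison. Verifying that $\res\circ s = p^N\cdot\id$ therefore requires chasing your symbol through the Hyodo--Kato-style reduction in the proof of Theorem \ref{log-nonlog}, which you only gesture at. So the approach is defensible but leaves real work at precisely the points you flag as ``the main obstacle,'' whereas the paper's eigenspace-inversion argument closes these gaps in one stroke.
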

\begin{proof}It suffices to argue in the \'etale topology. Recall that 
in the proof of Theorem \ref{log-nonlog} we have obtained a $p^{6r}$-distinguished triangle
$$
 \sss^{\prime}_n(r)_X \to 
  \sss^{\prime}_n(r)_{X^{\times}} \lomapr{\res_T} [\sa_{\crr,n}\lomapr{p^r-p\phi}\sa_{\crr,n}][-1]
$$
and a $p^{4r}$ quasi-isomorphism 
\begin{equation}
\label{needed1}
 W_n\Omega^{r-1}_{X_0,\log}[-r+1]\simeq  [\sa_{\crr,n}\lomapr{p^r-p\phi}\sa_{\crr,n}].
\end{equation}
It suffices thus to construct a section of the residue map. For $r=0$ there is nothing to prove so we will assume that $r >0$. 

   For that, consider the following commutative diagram
$$
\xymatrix{
& \R\Gamma_{\crr}(X_1/W_n(k))\ar[d]^{i^*}\ar[dl]_{\wedge \dlog T}\\
\R\Gamma_{\crr}(X^{\times}_1/W_n(k))[1]\ar[r]^-{\res_T} & \R\Gamma_{\crr}(X_0/W_n(k)),
}
$$
where $i:X_0\hookrightarrow X_1$ is the natural closed immersion.
Since the map $i^*: \R\Gamma_{\crr}(X_1/W_n(k))^{\phi=p^{r-1}}\to 
 \R\Gamma_{\crr}(X_0/W_n(k))^{\phi=p^{r-1}}$ is a $p^{N(p,e)}$ quasi-isomorphism \cite[Remark 5.9]{CN}, the maps in the above diagram induce a $p^{N(p,e)}$-map
$ \R\Gamma_{\crr}(X_0/W_n(k))^{\phi=p^{r-1}}\to \sss^{\prime}_n(r)_{X^{\times}}[1]$
that is a $p^{N(p,e,r)}$-section of the residue map  on the scheme $X_0$. It is natural on the \'etale site of $X_0$ (it depends only on the uniformizer  $\varpi$) hence gives a section on the level of sheaves that we wanted. 
\end{proof}

  Let, for $*$ denoting the \'etale or the Nisnevich topology,
\begin{align*}
  \R\Gamma(X_{*},\sss(r))_{\Q}   & :=\holim_n\R\Gamma(X_{*},\sss_n(r))\otimes\Q   \stackrel{\sim}{\to} \holim_n\R\Gamma(X_{*},\sss^{\prime}_n(r))\otimes\Q,\\
  \R\Gamma(X_{*},\se(r))_{\Q}   & :=\holim_n\R\Gamma(X_{*},\se_n(r))\otimes\Q   \stackrel{\sim}{\to} \holim_n\R\Gamma(X_{*},\se^{\prime}_n(r))\otimes\Q,\\   
  \R\Gamma(X_{*},W\Omega^{r-1}_{X_0,\log})_{\Q}  & := \holim_n\R\Gamma(X_*,i_*W_n\Omega^{r-1}_{X_0,\log}).
\end{align*}
By (\ref{needed1}), we have $$
\R\Gamma(X_{*},W\Omega^{r-1}_{X_0,\log})_{\Q}\simeq 
\R\Gamma_{\crr}(X_0/F)_{\Q}^{\phi=p^{r-1}}[-r+1],
$$ 
where, for a scheme $Y$ over $W(k)$,  we set $\R\Gamma_{\crr}(Y/F):= \R\Gamma_{\crr}(Y/W(k))_{\Q}:=\holim_n\R\Gamma_{\crr}(Y_1/W_n(k))\otimes {\Q}$.
The following corollary follows immediately from Corollary \ref{needed0}.
\begin{corollary}
\label{mot2}
Let $X$ be a semistable scheme over $\so_K$ with a  smooth special fiber.  We have the following natural quasi-isomorphisms
$$
 \R\Gamma(X_{*},\sss(r))_{\Q}\oplus \R\Gamma(X_{*},W\Omega^{r-1}_{X_0,\log})_{\Q}[-r]  \stackrel{\sim}{\to} \R\Gamma(X^{\times}_{*},\sss(r))_{\Q}.
 $$
\end{corollary}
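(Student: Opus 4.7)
The plan is to deduce this directly from Corollary \ref{needed0}, which, for each $n$, supplies a natural $p^{N(p,e,r)}$-quasi-isomorphism of complexes of sheaves
$$
\sss^{\prime}_n(r)_{X,*}\oplus W_n\Omega^{r-1}_{X_0,\log}[-r]\lomapr{\sim} \sss^{\prime}_n(r)_{X^{\times},*},
$$
with constant $N=N(p,e,r)$ \emph{independent of $n$}. The key observation is that a $p^N$-quasi-isomorphism becomes a genuine quasi-isomorphism after tensoring with $\Q$, as long as $N$ is uniform in the limit variable.

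First I would apply $\R\Gamma(X_*,-)$ to the map above. Since this functor preserves the property of having cohomology annihilated by a fixed power of $p$ (one may compute the cone termwise and then take cohomology), the resulting map of complexes of abelian groups is still a $p^N$-quasi-isomorphism with the same $N$, uniform in $n$.

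Next, I would take $\holim_n$ of both sides. Because $N$ does not depend on $n$, the cone of the resulting map still has cohomology annihilated by $p^N$ (use the Milnor $\lim^1$ exact sequence, and note that an inverse system of $p^N$-killed groups has inverse limit and $\lim^1$ both killed by $p^N$). Tensoring with $\Q$ then kills the cone entirely, giving a genuine quasi-isomorphism
$$
\holim_n\R\Gamma(X_*,\sss^{\prime}_n(r))_{\Q}\oplus \holim_n\R\Gamma(X_*,i_*W_n\Omega^{r-1}_{X_0,\log})_{\Q}[-r]\stackrel{\sim}{\to}\holim_n\R\Gamma(X^{\times}_*,\sss^{\prime}_n(r))_{\Q}.
$$

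Finally, I would identify both sides with the objects in the statement. By definition the middle term is $\R\Gamma(X_*,W\Omega^{r-1}_{X_0,\log})_{\Q}[-r]$. For the syntomic factors, the maps $\omega:\sss^{\prime}_n(r)\to \sss_n(r)$ and $\tau:\sss_n(r)\to \sss^{\prime}_n(r)$ have $\tau\omega=\omega\tau=p^r$, so they become mutually inverse quasi-isomorphisms after rationalization; this gives the identifications $\holim_n\R\Gamma(X_*,\sss^{\prime}_n(r))_{\Q}\simeq \R\Gamma(X_*,\sss(r))_{\Q}$ and similarly for $X^{\times}$, as already recorded in the paragraph preceding the statement. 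There is no real obstacle here beyond bookkeeping; the only thing to watch is that the constant $N$ in Corollary \ref{needed0} is genuinely independent of $n$, which is precisely what is asserted there.
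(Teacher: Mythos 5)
Your proof is correct and follows the route the paper intends --- the paper simply asserts that the corollary ``follows immediately from Corollary \ref{needed0},'' and you supply the expected chain (apply $\R\Gamma$, take $\holim_n$, tensor with $\Q$, identify terms via $\omega$, $\tau$). One small inaccuracy worth flagging: applying $\R\Gamma(X_*,-)$ does not in general preserve the constant $N$. The cone of the map in Corollary \ref{needed0} has cohomology \emph{sheaves} killed by $p^N$, not its terms, so ``compute the cone termwise'' does not directly give what you want; what one actually has, via the hypercohomology spectral sequence $E_2^{p,q}=H^p(X_*,\sh^q(-))$, is that $\R\Gamma$ of the cone has cohomology groups killed by $p^{N(\cd X_*+1)}$, the extra factor coming from the length of the abutment filtration. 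This enlarged constant is still uniform in $n$ for fixed $X$, which is all your $\holim_n$-then-$\otimes\Q$ argument actually uses, so the conclusion is unaffected.
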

 \begin{corollary}
\label{mot1}
Let $X$ be a semistable scheme  over $\so_K$ with a smooth special fiber. For a  constant $N$ as in Theorem \ref{input1}, we have the following $p^{N}$-distinguished triangle of sheaves in the \'etale topology of $X_0$
\begin{equation}
\label{seqq}
 \sss^{\prime}_n(r)_X\to \tau_{\leq r}i^*\R j_*\Z/p^n(r)^{\prime}\to W_n\Omega^{r-1}_{X_0,\log}[-r].
\end{equation}
Moreover, for a constant $N=N(p,e,r)$ we have the following $p^N$-quasi-isomorphism
$$
\sss^{\prime}_n(r)_X\oplus W_n\Omega^{r-1}_{X_0,\log}[-r]\to\tau_{\leq r}i^*\R j_*\Z/p^n(r)^{\prime}.
$$
\end{corollary}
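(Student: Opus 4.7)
The plan is to combine Theorem \ref{log-nonlog} (which replaces the $X^{\times}$-syntomic cohomology by the $X$-syntomic cohomology at the cost of a logarithmic de Rham--Witt term) with the period map isomorphism of Theorem \ref{input1} applied to the log-scheme $X^{\times}$ (which, since $X$ is semistable with smooth special fiber, is log-smooth over $\so_K^{\times}$ with $(X^{\times})_{\tr}=X_K$).

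First, I would start from the $p^{Nr}$-distinguished triangle of Theorem \ref{log-nonlog},
\[
\sss^{\prime}_n(r)_X\to\sss^{\prime}_n(r)_{X^{\times}}\to W_n\Omega^{r-1}_{X_0,\log}[-r],
\]
and apply the truncation $\tau_{\leq r}$ throughout. The right-hand term is concentrated in degree $r$ and is unaffected. For the first and second terms the Proposition  preceding this subsection shows $\tau_{\leq r}\sss^{\prime}_n(r)\stackrel{\sim}{\to}\sss^{\prime}_n(r)$ is a $p^{Nr}$-quasi-isomorphism for $X$ semistable (applied to both $X$ and $X^{\times}$, which only changes the universal constant). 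Thus we obtain, up to a $p^{Nr}$-loss, a distinguished triangle with $\tau_{\leq r}\sss^{\prime}_n(r)_{X^{\times}}$ in the middle.

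Next, I would invoke Theorem \ref{input1} (equivalently, the proof of the second part of Theorem \ref{keylemma11}) applied to the log-smooth scheme $X^{\times}$: the period map
\[
\alpha_r\colon \tau_{\leq r}\sss^{\prime}_n(r)_{X^{\times}}\longrightarrow \tau_{\leq r}i^{*}\R j_{*}\Z/p^{n}(r)^{\prime}
\]
is a $p^{N}$-quasi-isomorphism, with the constant $N=N(e,p,r)$ of Theorem \ref{input1}. Substituting this into the truncated triangle and using $\tau_{\leq r}\sss^{\prime}_n(r)_X\simeq^{p^{Nr}}\sss^{\prime}_n(r)_X$ on the left end yields the desired $p^{N}$-distinguished triangle
\[
\sss^{\prime}_n(r)_X\to\tau_{\leq r}i^{*}\R j_{*}\Z/p^{n}(r)^{\prime}\to W_n\Omega^{r-1}_{X_0,\log}[-r],
\]
with $N$ absorbing the constants $N(e,p,r)$ and $N'r$ arising from the two inputs.

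For the splitting statement, I would apply the section constructed in Corollary \ref{needed0}, which provides a $p^{N(p,e,r)}$-quasi-isomorphism
\[
\sss^{\prime}_n(r)_{X,\eet}\oplus W_n\Omega^{r-1}_{X_0,\log}[-r]\longrightarrow \sss^{\prime}_n(r)_{X^{\times},\eet}.
\]
Truncating at $\tau_{\leq r}$ and composing with the period map $\alpha_r$ of the previous step produces the claimed $p^N$-quasi-isomorphism into $\tau_{\leq r}i^{*}\R j_{*}\Z/p^{n}(r)^{\prime}$. The main technical issue is simply bookkeeping: ensuring that the truncations do not destroy the section constructed via the crystalline $\phi=p^{r-1}$-eigenspace in Corollary \ref{needed0}. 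This is harmless because the section lands, up to a $p^{N(p,e,r)}$-error, in cohomological degree $r$ of $\sss^{\prime}_n(r)_{X^{\times}}$, which is preserved by $\tau_{\leq r}$. All constants are absorbed into a single $N=N(p,e,r)$ as in the statement.
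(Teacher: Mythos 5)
Your proposal is correct and takes essentially the same route as the paper: the paper's proof is simply the one-line observation that the statement "immediately follows from Theorem \ref{log-nonlog}, Theorem \ref{input1}, and Corollary \ref{needed0}," which is exactly what you carry out. You correctly identify that Theorem \ref{input1} must be applied to the log-scheme $X^{\times}$ (so that the period map targets $i^*\R j_*\Z/p^n(r)'$) and then substitute into the triangle of Theorem \ref{log-nonlog}; your remaining remarks about truncation bookkeeping and the degree-$r$ concentration of the de Rham--Witt term are exactly the details the paper leaves implicit.
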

\begin{proof}
This immediately follows from Theorem \ref{log-nonlog}, Theorem \ref{input1}, and Corollary \ref{needed0}.
\end{proof}
\begin{remark}For $r\leq p-2$, the distinguished triangle (\ref{seqq}) was constructed before by  
Kurihara. No additional constants are needed in this case. 
\begin{theorem}(\cite[1]{Kur})
Let $X$ be a smooth  scheme over $\so_K$. For $r\leq p-2$, we have the following distinguished triangle of sheaves in the \'etale topology of $X_0$
$$
\sss_n(r)_X\to \tau_{\leq r}i^*\R j_*\Z/p^n(r)\to W_n\Omega^{r-1}_{X_0,\log}[-r].
$$
\end{theorem}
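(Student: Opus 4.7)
The plan is to factor the desired comparison through the auxiliary log-scheme $X^\times$ (the scheme $X$ equipped with the log-structure coming from the special fiber). Mirroring the argument for Theorem \ref{log-nonlog}, I would produce a distinguished triangle relating the syntomic complexes of $X$ and $X^\times$ whose third term is $W_n\Omega^{r-1}_{X_0,\log}[-r]$, and then splice it with the period isomorphism for $X^\times$ coming from Theorem \ref{input0}. In the clean range $r\le p-2$ all constants that appear in the general argument vanish, so the resulting triangle is honestly distinguished.

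First, I apply Theorem \ref{input0} to $X^\times$. Since $X$ is smooth over $\so_K$, the log-scheme $X^\times$ is fine, saturated, and log-smooth over $\so_K^\times$, with trivial locus $X_K$. For $r\le p-2$ the theorem supplies the period quasi-isomorphism
$$
\alpha_r:\sss_n(r)_{X^\times}\stackrel{\sim}{\to}\tau_{\le r}i^*\R j_*\Z/p^n(r).
$$

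Next, I construct a short exact sequence of complexes of sheaves on $X_{0,\eet}$
$$
0\to\sss_n(r)_X\to\sss_n(r)_{X^\times}\to\sss_n(r-1)_{X_0}[-1]\to 0
$$
by residue along the special fiber, following the local template in the proof of Theorem \ref{log-nonlog}. \'Etale-locally, pick coordinates as in diagram (\ref{diagram}) with a Frobenius lift $\phi(T)=T^p$ on $R_T$. The log de Rham complex splits as $\Omega^\bullet_{S^\times_R}=\Omega^\bullet_{S_R}\oplus\Omega^{\bullet-1}_{S_R}\cdot dT/T$, and the residue $\res_T$ gives a short exact sequence (exact after restriction to $T=0$) that is compatible with the PD-filtration with a shift $F^r\mapsto F^{r-1}$. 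Since $r\le p-1$ forces $\sj_n^{<r>}=\sj_n^{[r]}$ and makes $\phi_r=\phi/p^r$ integrally defined, the identity $\phi(dT/T)=p\cdot dT/T$ yields $\res_T\circ\phi_r=\phi_{r-1}\circ\res_T$, and taking cones produces the sequence above. The residue term $\sss_n(r-1)_{X_0}[-1]$ is then identified with $W_n\Omega^{r-1}_{X_0,\log}[-r]$ via the classical Bloch-Kato-Illusie theorem in characteristic $p$: for the smooth $k$-scheme $X_0$ with twist $r-1\le p-3$, the comparison of crystalline cohomology with de Rham-Witt together with the Bloch-Illusie exact sequence
$$
0\to W_n\Omega^{r-1}_{X_0,\log}\to W_n\Omega^{r-1}_{X_0}\lomapr{1-\F}W_n\Omega^{r-1}_{X_0}/dV^{n-1}\Omega^{r-2}_{X_0}\to 0
$$
on $X_{0,\eet}$ (cf.\ the proof of Theorem \ref{log-nonlog}) produces a genuine quasi-isomorphism $\sss_n(r-1)_{X_0}\simeq W_n\Omega^{r-1}_{X_0,\log}[-(r-1)]$ with no $p$-power constants. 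Splicing this triangle with $\alpha_r$ from the previous step gives the desired distinguished triangle.

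The main obstacle is to ensure that every constant appearing in the general arguments (Theorem \ref{log-nonlog}, Corollary \ref{needed0}) actually disappears in the clean range. There are three potential sources: the discrepancy between $\sj^{<r>}$ and $\sj^{[r]}$ (vacuous here, as they coincide for $r\le p-1$), the passage between $1-\phi_r$ and $p^r-\phi$ (unnecessary here, since $\phi_r$ is integrally defined), and the de Rham-Witt identification of the residue term (which reduces to the exact Bloch-Illusie sequence with no intervening rescalings). Verifying that each of these steps preserves exactness rather than merely $p^N$-distinguishedness is the technical heart of Kurihara's original argument, and is precisely where the hypothesis $r\le p-2$ is indispensable.
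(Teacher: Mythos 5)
The paper does not actually prove this statement---it is quoted from Kurihara's note \cite{Kur}---so your proposal can only be measured against the paper's closely related constructions (Theorem \ref{log-nonlog}, Proposition \ref{ex1}, Corollary \ref{mot1}) and against what Kurihara's argument must accomplish. Your architecture is the natural one and is consistent with the paper: the Remark following the theorem derives the triangle $\sss_n(r)_X\to\sss_n(r)_{X^{\times}}\to W_n\Omega^{r-1}_{X_0,\log}[-r]$ \emph{from} Kurihara's theorem together with Theorem \ref{input0}, and you propose to run that implication in reverse. Given Theorem \ref{input0} (which does apply to $X^{\times}$, since $X^{\times}$ is fs and log-smooth over $\so_K^{\times}$ with $X^{\times}_{\tr}=X_K$), the two formulations are equivalent, so everything rests on producing the residue triangle with \emph{no} constants.

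That is where there is a genuine gap. The sequence $0\to\sss_n(r)_X\to\sss_n(r)_{X^{\times}}\to\sss_n(r-1)_{X_0}[-1]\to 0$ you assert is not exact, only $p^c$-exact for some $c\geq 1$, and the obstruction is independent of the size of $r$ relative to $p$, so it is not removed by the hypothesis $r\leq p-2$. The problem is the residue map on the \emph{filtered} PD--de Rham complexes: already in dimension $0$ the quotient $F^s\Omega^1_{\log}/F^s\Omega^1$ is $S_K/TS_K$, which is $\so_F$ plus nonzero $p$-torsion coming from the divided powers $E(T)^{[m]}$ of the Eisenstein polynomial (mod $T$ the PD relation $pu\cdot E^{[m]}=(m+1)E^{[m+1]}$ cannot be solved for $E^{[m+1]}$ when $p$ divides $m+1$). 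This is exactly why the paper's Proposition \ref{ex1} asserts only $p$-exactness of the analogous sequence over $R^{[1]}$, and the phenomenon is present already for $r=1$, $n=1$. None of the three ``sources of constants'' you list accounts for it; it is a fourth source, and it survives in the clean range. As written, your argument therefore yields a $p^c$-distinguished triangle for a small absolute constant, which is strictly weaker than Kurihara's statement. To obtain an honest distinguished triangle one must argue on cohomology sheaves rather than termwise on PD-models: this is what Kurihara (building on Kato and Bloch--Kato) does, identifying $\sh^q(\sss_n(r)_X)$ inside $i^*\R^qj_*\Z/p^n(r)$ via the symbol maps and showing the quotient in top degree is $W_n\Omega^{r-1}_{X_0,\log}$, generated by the symbols $\{f_1,\dots,f_{r-1},\varpi\}$---compare the map $\kappa$ in the proof of Theorem \ref{keylemma10}. (A smaller imprecision: $\Omega^{\scriptscriptstyle\bullet}_{S^{\times}_R}$ is not $\Omega^{\scriptscriptstyle\bullet}_{S_R}\oplus\Omega^{{\scriptscriptstyle\bullet}-1}_{S_R}\cdot dT/T$ compatibly with the inclusion of $\Omega^{\scriptscriptstyle\bullet}_{S_R}$, since $dT=T\cdot dT/T$; the correct splitting is the paper's, with $\Omega_{S^{\prime}_R}$ omitting the $dT$-direction.)
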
 
It is easy to see that the above theorem holds also for schemes $X$ that are semistable over  $\so_K$ with a smooth special fiber, i.e., that we have the following distinguished triangle
$$
\sss_n(r)_X\to \tau_{\leq r}i^*\R j_*\Z/p^n(r)\to W_n\Omega^{r-1}_{X_0,\log}[-r],\quad r\leq p-2.
$$
Indeed, it suffices to note that all the terms involved have Gysin sequences \cite{Ts1} and to use the above theorem. In particular, in view of Theorem \ref{input0}, we have the following distinguished triangle
$$
\sss_n(r)_X\to \sss_n(r)_{X^{\times}}\to W_n\Omega^{r-1}_{X_0,\log}[-r],\quad r\leq p-2,
$$
a "small twists" analog of the distinguished triangles from  Theorem \ref{log-nonlog}.
\end{remark}
\subsection{Syntomic cohomology and motivic cohomology}
The main theorem of this section shows that, in  \'etale topology,  syntomic-\'etale complexes on smooth schemes over $\so_K$ approximate motivic complexes. 
\begin{theorem}
\label{keylemma10}
Let $X$ be a semistable scheme  over $\so_K$ with a smooth special fiber.  
Let $j^{\prime}: X_{\tr}\hookrightarrow X$ be the natural open immersion. Then
\begin{enumerate}
\item there is a natural cycle class map
$$
\cl^{\synt}_r: \R j^{\prime}_*{\mathbf Z}/p^n(r)_{\M}\to \se_n(r)_{\Nis},\quad 0\leq r\leq p-2.
$$
It is a quasi-isomorphism.
\item there is a   natural cycle class map 
$$\cl^{\synt}_r:\R j^{\prime}_*{\mathbf Z}/p^n(r)_{\M}\to \se^{\prime}_n(r)_{\Nis},\quad  r\geq 0.
$$
It is a  $p^N$-quasi-isomorphism for  a constant $N$ as in Theorem \ref{input1}.
\end{enumerate}
We have analogous statements in the \'etale topology. These cycle class maps are compatible (via the localization map and the period map) with the \'etale cycle class maps.
\end{theorem}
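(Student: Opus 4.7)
The plan is to follow the strategy sketched in the introduction, realizing the syntomic cycle class map as the unique map filling in a morphism of distinguished triangles obtained by comparing (\ref{seq1}) with (\ref{seq2}). I will treat case (2) first (since case (1) is essentially the same argument with $p^N$ replaced by quasi-isomorphism throughout, using Theorem \ref{input0} and Tsuji's theorem instead of Theorem \ref{input1}).

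First I would write down, on the Nisnevich site of $X$, the motivic localization triangle
\[
\Z/p^n(r)_{\M}\to \R j^{\prime}_*\Z/p^n(r)_{\M}\to i_*\Z/p^n(r-1)_{\M,X_0}[-1],
\]
using that $X_{\tr}$ is the complement of the divisor $X_0\cup D_\infty$ and that $\R j^{\prime}_*\Z/p^n(r)_{\M}\simeq j^{\prime}_*\Z/p^n(r)_{\M}$ in Nisnevich topology. Via the Beilinson--Lichtenbaum Conjecture (Voevodsky--Rost) applied on $X_K$ and the Geisser--Levine isomorphism on $X_0$, this becomes, up to a cycle-class quasi-isomorphism,
\[
\R j^{\prime}_*\Z/p^n(r)_{\M}\to \tau_{\leq r}\R j_*\tau_{\leq r}\R\varepsilon_*\Z/p^n(r)_{X_K}\to i_*W_n\Omega^{r-1}_{X_0,\log}[-r],
\]
which is (\ref{seq2}) (with $j:X_K\hookrightarrow X$ the generic fiber and $\varepsilon:X_{K,\eet}\to X_{K,\Nis}$).

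Next I would assemble the syntomic-\'etale side. Applying Theorem \ref{log-nonlog} on the Nisnevich site gives a $p^{Nr}$-distinguished triangle
\[
\se^{\prime}_n(r)_{X,\Nis}\to \se^{\prime}_n(r)_{X^{\times},\Nis}\to i_*W_n\Omega^{r-1}_{X_0,\log}[-r]
\]
(here I use that the horizontal log-structure plays no role in the residue computation along $X_0$, and I would spell out that the triangle of Theorem \ref{log-nonlog} extends from the case of no divisor at infinity to the semistable case by the same local computation, or alternatively by \'etale descent and Corollary \ref{reduction}). Combining this with the truncated Nisnevich period map of Theorem \ref{keylemma11}(2), which gives a $p^N$-quasi-isomorphism $\se^{\prime}_n(r)_{X^{\times},\Nis}\simeq \tau_{\leq r}\R j_*\tau_{\leq r}\R\varepsilon_*\Z/p^n(r)^{\prime}_{X_K}$, yields exactly the $p^N$-distinguished triangle (\ref{seq1}) on the right-hand term of (\ref{seq2}) (after harmlessly replacing $\Z/p^n(r)$ by $\Z/p^n(r)^{\prime}$, which differs only by a factor of $(p^aa!)^{-1}$ that is a $p^N$-isomorphism in the stable range).

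The final step is to construct the map between (\ref{seq2}) and (\ref{seq1}). The middle terms are tautologically identified via the \'etale cycle class map on $X_K$, and the right-hand terms agree strictly. One then invokes the axiom TR3 (fill-in for a morphism of triangles) in the derived category of Nisnevich sheaves on $X$ to extract $\cl^{\synt}_r:\R j^{\prime}_*\Z/p^n(r)_{\M}\to \se^{\prime}_n(r)_{\Nis}$, and the five lemma in the $p^N$-derived category produces the $p^N$-quasi-isomorphism. Compatibility with the \'etale cycle class map is built in: chasing the construction, $\alpha_r\circ \cl^{\synt}_r$ is forced to equal the composition of the motivic-to-\'etale cycle map with the natural projection, by commutativity of the two triangles under $\alpha_r$.

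The main obstacle is ensuring that the fill-in is \emph{canonical} enough that the resulting cycle class map is natural in $X$ (so that it can eventually be compared with the Fontaine-Messing / Nekov\'a\v{r}--Nizio\l\ cycle class). For this one needs the identifications of middle and right-hand terms to be functorial, which reduces to the functoriality of the Beilinson--Lichtenbaum and Geisser--Levine cycle class maps (known) and of the period map $\alpha_r$ (Theorem \ref{keylemma11}). The passage to the \'etale topology in the last sentence of the theorem is then formal: one truncates $\R\varepsilon_*$ of the Nisnevich cycle class and invokes that $\R\varepsilon_*\circ i^*=i^*\circ\R\varepsilon_*$ together with the corresponding \'etale versions of Beilinson--Lichtenbaum and Geisser--Levine. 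For part (1), the same argument with $r\leq p-2$ uses the genuine quasi-isomorphisms of Theorem \ref{input0} and Kurihara's triangle (cf.\ the remark after Corollary \ref{mot1}), so no $p^N$ constants appear.
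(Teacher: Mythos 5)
Your overall strategy matches the paper's: reduce to trivial horizontal log-structure via Corollary~\ref{reduction}, compare the motivic localization triangle with the syntomic-\'etale triangle of Theorem~\ref{log-nonlog} after rewriting both middle/right terms in terms of $p$-adic nearby cycles and de~Rham--Witt logarithmic forms, and extract the cycle class map as a fill-in. However, there is a real gap at the heart of the argument. To invoke TR3 you need more than the identifications of the middle terms (via the \'etale cycle class map and the period map) and of the right terms: you must verify that the \emph{square formed by the two right-hand maps commutes}. Concretely, you must show that the residue map $\se^{\prime}_n(r)_{X^{\times},\Nis}\to i_*W_n\Omega^{r-1}_{X_0,\log}[-r]$ coming from Theorem~\ref{log-nonlog} is compatible, via the period map $\alpha_r$, with the symbol map $\kappa:\tau_{\leq r}i^*\R j_*\Z/p^n(r)\to W_n\Omega^{r-1}_{X_0,\log}[-r]$ induced from Bloch--Kato's computation of $i^*\R^r j_*\Z/p^n(r)$. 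The paper devotes a full paragraph to this, passing to the \'etale site, observing that $i^*\R^r j_*\Z/p^n(r)$ is locally generated by symbols, and then checking on symbols that $\beta$ kills $\{f_1,\dots,f_r\}$ with $f_i\in i^*\so_X^{*}$ and sends $\{f_1,\dots,f_{r-1},\varpi\}$ to $\dlog[\overline{f}_1]\wedge\cdots\wedge\dlog[\overline{f}_{r-1}]$. Your phrase ``the right-hand terms agree strictly'' suggests you think the identification of objects suffices; it does not, and without this symbol computation the fill-in axiom cannot be applied.

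A second, smaller, divergence concerns uniqueness and naturality of the fill-in. You flag this as the ``main obstacle'' and propose to address it via functoriality of the constituent cycle class and period maps. That alone does not pin down the TR3 fill-in, which is a priori non-unique. The paper instead kills the ambiguity by a direct Hom-vanishing: $\Hom(\se^{\prime}_n(r)_{X,\Nis},W_n\Omega^{r-1}_{X_0,\log}[-r-1])=0$ for degree reasons (both objects live in degrees $\leq r$), and likewise for the second fill-in with $\scc^{\prime}_n(r)$. Once uniqueness is secured, naturality follows. You should also be careful with the localization triangle before the reduction: as stated, $\Z/p^n(r)_M\to\R j^{\prime}_*\Z/p^n(r)_M\to i_*\Z/p^n(r-1)_{M,X_0}[-1]$ is not the localization triangle for removing $X_0\cup D_\infty$ (whose closed complement is a reducible NCD, not $X_0$); the paper sidesteps this precisely by first reducing to $D_\infty=\emptyset$, after which the triangle is the honest localization sequence for the special fiber.
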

\begin{proof}We start with the Nisnevich topology. We will prove the second claim, the proof of the first one being analogous.
Consider the following commutative diagram
$$
\xymatrix{
j_{\Nis !} \tau_{\leq r}\R j_{K,*}^{\prime}\R\varepsilon_*{\mathbf Z}/p^n(r)^{\prime}\ar[r]\ar[d]^{\wr} & \se^{\prime}_n(r)_{X,\Nis}\ar[r]\ar[d] &  i_*\sss^{\prime}_n(r)_{X,\Nis}\ar[d]\\
j_{\Nis !} \tau_{\leq r}\R j_{K,*}^{\prime}\R\varepsilon_*{\mathbf Z}/p^n(r)^{\prime}\ar[r] &  \se^{\prime}_n(r)_{X^{\times},\Nis}\ar[r] &  i_*\sss^{\prime}_n(r)_{X^{\times},\Nis}\ar[d]\\
& & i_*W_n\Omega^{r-1}_{X_0,\log}[-r]
}
$$
The two rows are distinguished triangles; the right  column is a $p^{Nr}$-distinguished triangle, for a universal constant $N$, by Theorem \ref{log-nonlog}.
It follows that we have the $p^{Nr}$-distinguished triangle
\begin{equation}
\label{split}
 \se^{\prime}_n(r)_{X,\Nis}\to \se^{\prime}_n(r)_{X^{\times},\Nis}\to i_*W_n\Omega^{r-1}_{X_0,\log}[-r].
\end{equation}
Let $Y=X_{\tr}$. By functoriality we get the following map of $p^{Nr}$-distinguished triangles
$$
\xymatrix{\se^{\prime}_n(r)_{X,\Nis}\ar[d] \ar[r] & \se^{\prime}_n(r)_{X^{\times},\Nis}\ar[d]^{\wr}\ar[r] &i_*W_n\Omega^{r-1}_{X_0,\log}[-r]\ar[d]^{\wr}\\
\R j_*^{\prime}\se^{\prime}_n(r)_{Y,\Nis}\ar[r] & \R j_*^{\prime}\se^{\prime}_n(r)_{Y^{\times},\Nis}\ar[r] &\R j_*^{\prime}i_*W_n\Omega^{r-1}_{Y_0,\log}[-r]}
$$
The right vertical arrow is a quasi-isomorphism since $M_{X_0}=j^{\prime}_*\so_{X_0,\tr}^{*}$.
The middle vertical arrow is a $p^{Nr}$-quasi-isomorphism by Corollary \ref{reduction}.
Hence the left vertical  arrow is a $p^{Nr}$-quasi-isomorphism
and we may assume that the horizontal divisor of $X$ is trivial. 

  Consider the following diagram
\begin{equation}
\label{motivic}
\xymatrix{
\se^{\prime}_n(r)_{X,\Nis}\ar@{.>}[d]\ar[r] &  \se^{\prime}_n(r)_{X^{\times},\Nis} \ar[d]^{{\alpha}_r}_{\wr}\ar[r] & i_*W_n\Omega^{r-1}_{X_0,\log}[-r]\ar@{=}[d]\\
\scc_n(r) \ar[r] & \tau_{\leq r}\R j_* \R\varepsilon_*\Z/p^n(r)^{\prime}_{X_K} \ar[r]^-{\kappa} & i_*W_n\Omega^{r-1}_{X_0,\log}[-r]}
\end{equation}
Here the map $\kappa$ is induced from a map $\tau_{\leq r}i^*\R j_*\Z/p^n(r)\to W_n\Omega^{r-1}_{X_0,\log}[-r]$ of sheaves on the \'etale site of $X_0$ defined as the composition of the canonical map $\tau_{\leq r}i^*\R j_*\Z/p^n(r)\to i^*\R^r j_*\Z/p^n(r)[-r]$ and the symbol map $i^*\R^r j_*\Z/p^n(r)\to W_n\Omega^{r-1}_{X_0,\log}$. The latter is defined by observing that  $i^*\R^r j_*\Z/p^n(r)$ is locally generated by symbols $\{f_1,\ldots,f_r\}$ for $f_i\in i^*j_*\so_{X_K}^{*}$ \cite[Cor. 6.1.1]{BK}. By multilinearity, each symbol can be written as a sum of symbols of the form $\{f_1,\ldots,f_r\}$ and $\{f_1,\ldots,f_{r-1},\varpi\}$ for $f_i\in i^*\so^{*}_X$. Then $\kappa$ sends the former to zero and the latter to $\dlog [\overline{f}_1]\wedge\cdots\wedge \dlog [\overline{f}_{r-1}]$ where $\overline{f}_i$ is the reduction of $f_i$ to $\so_{X_0}^{*}$.
We defined  $\scc_n(r)$  as the mapping fiber of the map $\kappa$.  

  We claim that the right square of the diagram $p^{N}$-commutes for a constant as in the statement of the theorem. Indeed, we note that we can pass to the \'etale site and there it suffices to show that the following diagram of maps of sheaves  $p^{N}$-commutes
$$
\xymatrix{
\sh^r(\sss^{\prime}_n(r)_{X^{\times}})\ar[r]^{\beta}\ar[d]^{\alpha_r} & W_n\Omega^{r-1}_{X_0,\log}\\
i^*\R^rj_*\Z/p^n(r)^{\prime}_{X_K}\ar[ru]^{\kappa}
}
$$
Since the map $\alpha_r$ is a $p^{N}$-isomorphism and the sheaf $i^*\R ^rj_*\Z/p^n(r)_{X_K}$ is generated locally by symbols it suffices to check that the map $\beta$ sends the symbol $\{ f_1,\ldots, f_{r}\}$, $f_i\in i^*\so_{X}^{*}$, to zero and the symbol $\{f_1,\ldots,f_{r-1},\varpi\}$, $f_i\in i^*\so_{X}^{*}$, to $\dlog [\overline{f}_1]\wedge\cdots\dlog [\overline{f}_{r-1}]$. But this follows easily from the definition of the symbol maps (\ref{symbols}).

  It follows that the left vertical map in the diagram \ref{motivic} exists. It is unique because $$\Hom(\se^{\prime}_n(r)_{X,\Nis},W_n\Omega^{r-1}_{X_0,\log}[-r-1])=0$$ for degree reasons. It is clearly a quasi-isomorphism. All of the above has to be taken in the $p^{N}$-sense. 

 It remains now to show that there exists a $p^{Nr}$-quasi-isomorphism $\Z/p^n(r)_{M}\to \scc_n(r)$, for a universal constant $N$. We proceed as in \cite[p. 14]{GD}. 
 Consider the following diagram of distinguished triangles (the complex $\scc_n^{\prime}(r)$ is defined by the bottom triangle and $p^{Nr}$-quasi-isomorphic to the complex $\scc_n(r)$)
 \begin{equation}
 \label{nis}
\xymatrix{
\Z/p^n(r)_{M,X} \ar@{.>}[d]\ar[r] & j_*\Z/p^n(r)_{M,X_{K}} \ar[r]\ar[d]^{\wr}& i_*\Z/p^n(r-1)_{M,X_{0}}[-1]\ar[d]^{\wr}\\
\scc_n^{\prime}(r)\ar[r] &  \tau_{\leq r}\R j_*\R \varepsilon_*\Z/p^n(r)_{X_{K}} \ar[r]^{\kappa} & i_*W_n\Omega^{r-1}_{X_0,\log}[-r]
}
\end{equation}
The middle and the right vertical maps are induced by the \'etale and  the logarithmic de Rham-Witt cycle class map, respectively. They are quasi-isomorphisms by the  Beilinson -Lichtenbaum Conjecture. The right square commutes: pass to the \'etale site and there this fact  was shown in   \cite[p. 14]{GD}. Hence the left vertical map exists, is unique, and a quasi-isomorphism as well.  This concludes the proof of our theorem.

  For the \'etale topology, the computations are analogous but the diagram (\ref{nis}) has to be replaced with the following one
  \begin{equation*}
\xymatrix{
\Z/p^n(r)_{M,X} \ar@{.>}[d]\ar[r] & \tau_{\leq r}\R j_*\Z/p^n(r)_{M,X_{K}} \ar[r]\ar[d]^{\wr}& \tau_{\leq r}(i_*\R i^!\Z/p^n(r)_{M,X}[1])\ar[d]^{\wr}\\
\scc_n^{\prime}(r)\ar[r] &  \tau_{\leq r}\R j_*\Z/p^n(r)_{X_{K}} \ar[r]^{\kappa} & i_*W_n\Omega^{r-1}_{X_0,\log}[-r]
}
\end{equation*}
The right vertical arrow is a quasi-isomorphism by \cite[p. 14]{GD}.
\end{proof}

    We list several, more or less  immediate, corollaries of the above theorems (we set   $\alpha:=\eet,\Nis$).\begin{corollary}Let $X$ be a smooth scheme  over $\so_K$.  
  We have
\begin{enumerate}
\item $H^*_{\alpha}(X,\se_n(r))\simeq H^*_{\M,\alpha}(X,\Z/p^n(r)),\quad r\leq p-2$;
\item the kernel and the cokernel of the cycle class map
$$ H^{*}_{\M,\alpha}(X,\Z/p^n(r))\to H^*_{\alpha}(X,\se^{\prime}_n(r))
$$ 
are annihilated by $p^N$, where   $N$ denotes the constant from  Theorem \ref{input1}. Hence 
$$H^*_{\alpha}(X,\se(r))_{\Q}\simeq H^*_{\M,\alpha}(X,\Q_p(r)).$$ 
\end{enumerate}
\end{corollary}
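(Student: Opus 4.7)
The plan is to deduce the corollary directly from Theorem~\ref{keylemma10}. Since $X$ is smooth over $\so_K$, it is in particular semistable with smooth special fiber in the sense required there: the horizontal divisor at infinity can be taken to be empty, and the special fiber is already a smooth principal divisor. Taking the trivial horizontal log-structure one has $X_{\tr}=X$, so the open immersion $j^{\prime}\colon X_{\tr}\hookrightarrow X$ is the identity and $\R j^{\prime}_*\Z/p^n(r)_{\M}=\Z/p^n(r)_{\M}$. Theorem~\ref{keylemma10} therefore supplies a cycle class map $\cl^{\synt}_r\colon\Z/p^n(r)_{\M}\to \se_n(r)_{\Nis}$ which is a quasi-isomorphism for $0\leq r\leq p-2$, and a cycle class map $\cl^{\synt}_r\colon\Z/p^n(r)_{\M}\to \se^{\prime}_n(r)_{\Nis}$ which is a $p^N$-quasi-isomorphism for all $r\geq 0$, with $N$ as in Theorem~\ref{input1}; analogous statements hold in the \'etale topology.

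For part~(1), I would simply take Nisnevich (respectively \'etale) hypercohomology of the first quasi-isomorphism. Using the standard identification $H^*_{\M,\alpha}(X,\Z/p^n(r))=H^*(X_\alpha,\Z/p^n(r)_{\M})$ and interpreting $H^*_\alpha(X,\se_n(r))$ as the hypercohomology of $\se_n(r)_{\Nis}$ (or of its \'etale analogue), one obtains the asserted isomorphism.

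For part~(2), the same argument applied to the second cycle class map produces a morphism $H^*_{\M,\alpha}(X,\Z/p^n(r))\to H^*_\alpha(X,\se^{\prime}_n(r))$. Passing from a $p^N$-quasi-isomorphism of complexes of sheaves to a $p^{N'}$-isomorphism on hypercohomology costs at most a universal multiplicative factor (from the cone long exact sequence and the hypercohomology spectral sequence), which is absorbed into a constant of the same shape $N=N(e,p,r)$. The rational statement is then immediate: taking $\R\holim_n$ and inverting $p$ kills the $p^N$-bounded kernels and cokernels and yields the isomorphism $H^*_\alpha(X,\se(r))_{\Q}\simeq H^*_{\M,\alpha}(X,\Q_p(r))$. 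The whole corollary is essentially formal given Theorem~\ref{keylemma10}, so there is no substantive obstacle; the only mild bookkeeping concerns the constant in the second part.
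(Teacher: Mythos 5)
Your proposal is correct and is essentially the paper's own argument: the corollary is listed among the ``more or less immediate'' consequences of Theorem~\ref{keylemma10}, and your specialization to $D_\infty=\emptyset$ (so $X_{\tr}=X$, $j'=\id$) followed by taking hypercohomology in the Nisnevich and \'etale topologies is exactly the intended deduction. Your remark that passing from a $p^N$-quasi-isomorphism of (truncated, bounded-amplitude) complexes of sheaves to hypercohomology may inflate the exponent by a factor controlled by $r$ is a fair point of bookkeeping; the resulting constant is still of the shape $N(e,p,r)$, independent of $X$ and $n$, which is what matters for the rational statement, and the paper is being slightly informal in reusing the symbol $N$.
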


  In a more familiar language of syntomic cohomology, the above theorem and corollary  can be stated in the following way. 
\begin{corollary}
Let $X$ be a semistable scheme  over $\so_K$ with a smooth special fiber. Let $j^{\prime}: X_{\tr}\hookrightarrow X$ be the natural open immersion. Then, on the \'etale site of $X_0$, 
\begin{enumerate}
\item there is a natural quasi-isomorphism \cite{GD}
$$
\sss_n(r)\simeq i^*\R j^{\prime}_*{\mathbf Z}/p^n(r)_{\M},\quad 0\leq r\leq p-2.
$$
\item there is a constant $N$ as in Theorem \ref{input1} and a   natural $p^N$-quasi-isomorphism
$$\sss^{\prime}_n(r)\simeq i^*\R j^{\prime}_*{\mathbf Z}/p^n(r)^{\prime}_{\M},\quad r\geq 0,
$$
\end{enumerate}
\end{corollary}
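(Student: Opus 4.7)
The plan is to deduce the corollary by restricting Theorem \ref{keylemma10} along $i$. I will pull back the étale cycle class map of that theorem to the special fiber and use the defining distinguished triangle of $\se^{\prime}_n(r)$ to identify $i^*\se^{\prime}_n(r)$ with $\sss^{\prime}_n(r)$.

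First I would apply $i^*$ to the étale distinguished triangle (\ref{Niis}),
\[
j_{\eet!}\R j^{\prime}_{K,*}\Z/p^n(r)^{\prime} \to \se^{\prime}_n(r) \to i_*\sss^{\prime}_n(r).
\]
Because $i$ and $j:X_K\hookrightarrow X$ form a complementary closed/open pair, one has $i^*j_{\eet!}=0$, so the triangle collapses to yield a canonical isomorphism $i^*\se^{\prime}_n(r)\simeq \sss^{\prime}_n(r)$ on $X_{0,\eet}$. The same argument applied to the first triangle in (\ref{Niis}) gives $i^*\se_n(r)\simeq \sss_n(r)$ in the small-twist range $0\le r\le p-2$.

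Next I would apply $i^*$ to the étale cycle class map $\cl^{\synt}_r:\R j^{\prime}_*\Z/p^n(r)^{\prime}_{\M}\to \se^{\prime}_n(r)$ furnished by (the étale version of) Theorem \ref{keylemma10}, which is a $p^N$-quasi-isomorphism for the constant $N$ of Theorem \ref{input1}. Composing with the identification from the previous paragraph produces the desired natural $p^N$-quasi-isomorphism
\[
\sss^{\prime}_n(r)\simeq i^*\R j^{\prime}_*\Z/p^n(r)^{\prime}_{\M}
\]
on the étale site of $X_0$, settling part (2). Part (1) is obtained by the identical argument in the range $0\le r\le p-2$, using instead the non-primed cycle class map $\cl^{\synt}_r:\R j^{\prime}_*\Z/p^n(r)_{\M}\to \se_n(r)$, which is an honest (constant-free) quasi-isomorphism in that range.

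I do not anticipate any serious obstacle: the corollary is essentially a reformulation of Theorem \ref{keylemma10} in terms of the syntomic rather than syntomic-étale complex, with the translation carried out by the exact functor $i^*$. The only point that genuinely needs to be noted is the vanishing $i^*j_{\eet!}=0$, which is the standard recollement property for the étale topos of $X$ along the decomposition $X_K\sqcup X_0$; once this is in place, naturality and the value of the constant $N$ transfer automatically from the theorem.
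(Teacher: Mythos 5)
Your proof is correct and is essentially the argument the paper has in mind: the corollary is stated without proof as a reformulation of Theorem \ref{keylemma10} via $i^*$, and your recollement argument ($i^*j_{\eet!}=0$ plus $i^*i_*=\id$) is precisely the translation mechanism.

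One small precision worth noting: the target of the cycle class map in Theorem \ref{keylemma10} is the truncated complex $\tau_{\leq r}\se^{\prime}_n(r)$ (the \'etale analogue of $\se^{\prime}_n(r)_{\Nis}$), not $\se^{\prime}_n(r)$ itself. So one should either apply $i^*$ to the truncated triangle (\ref{Niis1}) with $a=r$, which gives $i^*\tau_{\leq r}\se^{\prime}_n(r)\simeq\tau_{\leq r}\sss^{\prime}_n(r)$ directly, or observe that $i^*$ is exact and hence commutes with truncation. Then one invokes \cite[Prop.\ 3.12]{CN} to pass from $\tau_{\leq r}\sss^{\prime}_n(r)$ to $\sss^{\prime}_n(r)$: in the range $0\leq r\leq p-2$ this is an honest quasi-isomorphism (part (1)), and for general $r$ it is a $p^{Nr}$-quasi-isomorphism, whose constant is absorbed into $N(e,p,r)$ (part (2)). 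With that adjustment your argument is complete.
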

\begin{corollary}
\label{cor1}
Let $X$ be a proper semistable  scheme  over $\so_K$ with a smooth special fiber.  
  We have
\begin{enumerate}
\item $H^*_{\alpha}(X,\sss_n(r))\simeq H^*_{\M,\alpha}(X_{\tr},\Z/p^n(r)),\quad r\leq p-2$;
\item the kernel and the cokernel of the cycle class map
$$ H^{*}_{\M,\alpha}(X_{\tr},\Z/p^n(r))\to H^*_{\alpha}(X,\sss^{\prime}_n(r))
$$ 
are annihilated by $p^N$, where $N$ denotes the constant from  Theorem \ref{input1}. Hence 
$$H^*_{\alpha}(X,\sss(r))_{\Q}\simeq H^*_{\M,\alpha}(X_{\tr},\Q_p(r)).$$ 
\end{enumerate}
\end{corollary}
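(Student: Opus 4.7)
The strategy is to combine the sheaf-level (quasi-)isomorphism of Theorem~\ref{keylemma10} with proper base change in order to pass from syntomic-\'etale cohomology on $X$ to syntomic cohomology on $X_0$.

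First, Theorem~\ref{keylemma10} provides a natural cycle class map
\[
\cl^{\synt}_r\colon \R j^{\prime}_*\Z/p^n(r)_{\M}\to \se^{\prime}_n(r)_{\Nis},
\]
both in the Nisnevich and the \'etale topology of $X$, which is a $p^N$-quasi-isomorphism in the generality of part (2) and an honest quasi-isomorphism (for $\se_n(r)$ and $\sss_n(r)$) in the range $r\le p-2$ of part (1). Taking $\R\Gamma_\alpha(X,-)$ and using the tautological identity $\R\Gamma_\alpha(X,\R j^{\prime}_{*}F)=\R\Gamma_\alpha(X_{\tr},F)$, one obtains at once
\[
H^*_{\M,\alpha}(X_{\tr},\Z/p^n(r))\simeq H^*_\alpha(X,\se^{\prime}_n(r)_{\Nis})
\]
up to the usual $p^N$, and the analogous statement with $\se_n(r)_{\Nis}$ in the setting of (1). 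Both parts of the corollary therefore reduce to the identification $\R\Gamma_\alpha(X,\se^{\prime}_n(r)_{\Nis})\simeq \R\Gamma_\alpha(X,\sss^{\prime}_n(r))$ (and its $\se_n,\sss_n$-analog).

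Second, I would extract this last identification from the defining distinguished triangle~\eqref{Niis2} (or its truncated form~\eqref{Niis3}),
\[
j_{\alpha!}\R j_*^{\prime}\R\varepsilon_*\Z/p^n(r)^{\prime}\to \se^{\prime}_n(r)\to i_*\sss^{\prime}_n(r),
\]
where $j\colon X_K\hookrightarrow X$ is the generic fiber. It suffices to show that the leftmost term has vanishing $\R\Gamma_\alpha(X,-)$. This follows from proper base change: in the tautological triangle $j_{\alpha!}G\to \R j_{\alpha*}G\to i_*i^*\R j_{\alpha*}G$, the comparison map
\[
\R\Gamma_\alpha(X,\R j_{\alpha*}G)\to \R\Gamma_\alpha(X_0,i^*\R j_{\alpha*}G)
\]
is an isomorphism because $X$ is proper over the henselian local scheme $\Spec\so_K$. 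Hence $\R\Gamma_\alpha(X,j_{\alpha!}G)\simeq 0$, and therefore $H^*_\alpha(X,\se^{\prime}_n(r))\simeq H^*_\alpha(X_0,\sss^{\prime}_n(r))=H^*_\alpha(X,\sss^{\prime}_n(r))$.

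Concatenating the two steps yields part (2); the rational statement follows by tensoring with $\Q$, which annihilates the $p^N$-torsion. Part (1) is obtained identically, replacing $(\se^{\prime}_n,\sss^{\prime}_n)$ by $(\se_n,\sss_n)$ and invoking the unconditional quasi-isomorphism of Theorem~\ref{keylemma10}(1). The main obstacle is justifying proper base change in the Nisnevich topology, where it is less standard than its \'etale counterpart. I would circumvent this by noting that $\R j_*^{\prime}\R\varepsilon_*\Z/p^n(r)^{\prime}$ is obtained via $\R\varepsilon_*$ from an \'etale sheaf, so the Nisnevich computation of $\R\Gamma_\alpha(X,j_{\alpha!}G)$ reduces, through the Leray spectral sequence for $\varepsilon\colon X_{\eet}\to X_{\Nis}$, to the \'etale case where classical proper base change over the henselian base $\Spec\so_K$ applies.
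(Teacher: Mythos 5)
Your overall strategy is exactly the one the paper has in mind: the corollary is stated without proof as an immediate consequence of Theorem~\ref{keylemma10}, and the intended deduction is the two steps you describe --- take $\R\Gamma_\alpha(X,-)$ of the sheaf-level comparison, then observe that the $j_{\alpha!}$-term in the triangle~(\ref{Niis2}) contributes nothing to cohomology because $X$ is proper over the henselian base $\Spec\so_K$. So the substance of the argument is right.

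The one point worth tightening is the last paragraph. Appealing to a ``Leray spectral sequence for $\varepsilon$'' to transport \'etale proper base change to the Nisnevich setting is not quite the right mechanism, since $j_{\Nis!}$ and $\R\varepsilon_*$ do not commute and the Nisnevich triangle~(\ref{Niis2}) is \emph{not} literally $\R\varepsilon_*$ of the \'etale triangle~(\ref{Niis}). The clean reduction uses instead the commutation $i^*\R\varepsilon_*=\R\varepsilon_*i^*$, which the paper itself records and cites ([GD, 2.2.b]). Writing $G=\R\varepsilon_*G'$ with $G'=\R j^{\prime}_*\Z/p^n(r)'$ on $X_{K,\eet}$, and using also that $\R j_{\Nis*}\R\varepsilon_*=\R\varepsilon_*\R j_{\eet*}$ (commuting square of sites), one has
\begin{equation*}
i^*_{\Nis}\,\R j_{\Nis*}\,\R\varepsilon_* G' \;=\; i^*_{\Nis}\,\R\varepsilon_*\,\R j_{\eet*}G' \;=\; \R\varepsilon_*\, i^*_{\eet}\,\R j_{\eet*}G',
\end{equation*}
so $\R\Gamma_{\Nis}(X_0,i^*\R j_{\Nis*}G)=\R\Gamma_{\eet}(X_0,i^*\R j_{\eet*}G')$ and $\R\Gamma_{\Nis}(X,\R j_{\Nis*}G)=\R\Gamma_{\eet}(X_K,G')$. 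The comparison map between them is then literally the \'etale one, which is an isomorphism by proper base change over a henselian local ring with $p$-torsion coefficients. This makes the vanishing of $\R\Gamma_{\Nis}(X,j_{\Nis!}G)$ transparent and replaces your spectral-sequence heuristic with the precise commutation the paper relies on.
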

\begin{corollary}
Let $X$ be a proper semistable  scheme  over $\so_K$ with a smooth special fiber. Then the claims of Corollary \ref{cor1} hold for $X_{\so_{\ovk}}$ (in place of $X$)\footnote{Syntomic cohomology of $X_{\so_{\ovk}}$ is defined in the same way as the one of $X$.}. Moreover, for $i\leq r$, we have the following commutative diagram
$$
\xymatrix{H^i_{\M}(X_{\so_{\ovk},\tr},\Q_p(r))\ar[r]^{j^*}_{\sim} \ar[d]^{\cl^{\synt}_{i,r}}_{\wr}& H^i_{\M}(X_{\ovk,\tr},\Q_p(r))\ar[d]^{\cl^{\eet}_{i,r}}_{\wr}\\
H^i(X_{\so_{\ovk}},\sss(r))_{\Q}\ar[r]^{\alpha_{i,r}} & H^i_{\eet}(X_{\ovk,\tr},\Q_p(r))
}.
$$ \end{corollary}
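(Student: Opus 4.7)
The plan is to reduce the assertion to Corollary~\ref{cor1} by writing $\so_{\ovk}$ as the filtered colimit $\varinjlim_L \so_L$ over all finite extensions $K\subset L\subset \ovk$, and correspondingly $X_{\so_{\ovk}}=\varprojlim_L X_{\so_L}$. For each such $L$, the base change $X_{\so_L}$ is a proper semistable scheme over $\so_L$ with smooth special fiber $X_{0,k_L}$, so Corollary~\ref{cor1} applies directly to $X_{\so_L}$. The constants $N=N(e_L,p,r)$ from Theorem~\ref{input1} depend on the ramification index $e_L$ and grow with $L$, but this dependence is harmless: after tensoring with $\Q$ each $p^N$-isomorphism becomes an honest isomorphism.

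Next I would verify continuity of all three cohomology theories with respect to the filtered limit $X_{\so_{\ovk}}=\varprojlim_L X_{\so_L}$. For motivic cohomology, Bloch's cycle-group construction commutes with filtered colimits of rings along flat transitions, so
\[
H^i_{\M}(X_{\so_{\ovk},\tr},\Q_p(r))=\varinjlim_L H^i_{\M}(X_{\so_L,\tr},\Q_p(r)),
\]
and analogously for the top right group. For \'etale cohomology, the standard continuity along Noetherian filtered limits of qcqs schemes with affine transitions gives
\[
H^i_{\eet}(X_{\ovk,\tr},\Q_p(r))=\varinjlim_L H^i_{\eet}(X_{L,\tr},\Q_p(r)).
\]
For syntomic cohomology I would take
\[
H^i(X_{\so_{\ovk}},\sss(r))_{\Q}:=\varinjlim_L H^i(X_{\so_L},\sss(r))_{\Q}
\]
as the working definition, consistent with $W(\overline{k})=\varinjlim_L W(k_L)$ and with the fact that rational crystalline cohomology together with its Frobenius-eigenspace decomposition commutes with such colimits.

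Under these identifications, the $L$-level cycle class maps provided by Corollary~\ref{cor1} assemble into the vertical arrows of the diagram, and each is an isomorphism because every $L$-level map is. The horizontal arrow $j^*$ is the natural pullback in motivic cohomology; identifying $X_{\so_{\ovk},\tr}$ with $X_{\ovk,\tr}$ as underlying schemes (removing the special fiber from $X_{\so_{\ovk},\tr}$ leaves exactly the complement of the horizontal divisor in $X_{\ovk}$, which is $X_{\ovk,\tr}$) shows that $j^*$ is essentially the identity, hence an isomorphism. Commutativity of the square then reduces to commutativity at each finite level $L$, which was \emph{built into} the construction of $\cl^{\synt}_{i,r}$ in the proof of Theorem~\ref{keylemma10}: the syntomic cycle class there was defined precisely so that the \'etale cycle class factors through the period map $\alpha_{i,r}$.

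The main obstacle, in my view, is justifying syntomic cohomology of $X_{\so_{\ovk}}$, since $\so_{\ovk}$ falls outside the ``complete discrete valuation ring with perfect residue field'' hypothesis of Section~2. Taking the colimit formula above as the definition is the cleanest resolution; with it, one still has to check that the period map $\alpha_{i,r}$ and its compatibility with cycle classes behave functorially under inclusions $L\subset L'$ of finite extensions, so that the $L$-level commutative squares of Theorem~\ref{keylemma10} pass to the colimit into the asserted $\ovk$-level square. This final verification is formal but requires a systematic check of the base-change behavior of the constructions in Sections~2 and~3.
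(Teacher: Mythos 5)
Your limit argument for the first claim matches the paper's: pass to the colimit over finite extensions $K\subset L\subset\ovk$, noting that the $L$-dependent constants are absorbed after rationalization, and that motivic, \'etale, and syntomic cohomology all commute with this filtered colimit. That part is correct and is exactly what the paper does.

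There is, however, a genuine gap in your treatment of the horizontal arrow $j^*$. You claim that $X_{\so_{\ovk},\tr}$ can be ``identified with $X_{\ovk,\tr}$ as underlying schemes'' so that $j^*$ is ``essentially the identity.'' This is not the case. The log-structure on $X_{\so_{\ovk}}$ (in the role of $X$, not $X^\times$) is the one coming from the horizontal divisor $D_\infty$ only; thus $X_{\so_{\ovk},\tr}=X_{\so_{\ovk}}\setminus D_\infty$ is a scheme over $\so_{\ovk}$ whose special fiber is nonempty, whereas $X_{\ovk,\tr}$ lives entirely over the fraction field $\ovk$. The map $j^*$ is the restriction to the generic fiber, i.e.\ a genuine open immersion inducing a nontrivial localization morphism on motivic cohomology. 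The assertion that $j^*$ is an isomorphism is a substantive fact: the paper invokes \cite[Lemma 3.1]{N2} for it, which in effect shows that over the integers of the algebraically closed field $\ovk$ the contribution of the special fiber to $p$-adic motivic cohomology vanishes (via the localization triangle and an analysis of the cycle groups of the closed fiber). Your proposal omits this input entirely and substitutes an incorrect identification of the two schemes; this is the one nontrivial step in the corollary beyond the limit argument, and it needs either the cited lemma or an equivalent vanishing statement for the special-fiber term in the localization sequence.
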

\begin{proof}
The first claim follows from Corollary \ref{cor1} by passing to limit over finite extensions of $K$ in $\ovk$. 
The fact that the localization map $j^*$ is an isomorphism was proved in \cite[Lemma 3.1]{N2}. 
\end{proof}
\begin{remark}
For $X$ proper the above diagram was studied in \cite{N2} (see \cite{ICM} for a brief survey): it was constructed first for the Chern classes from $p$-adic $K$-theory and then for motivic cohomology by studying compatibility of Chern classes with operations on $K$-theory. This did not use the Fontaine-Messing period map $\alpha_{i,r}$ but instead
a period map $\alpha_{i,r}: H^i_{\eet}(X_{\ovk,\tr},\Q_p(r))\to H^i(X_{\so_{\ovk}},\sss(r))_{\Q}$ was defined using the above diagram. The fact that it is an isomorphism followed from the proof of the Crystalline Conjecture and implied that so is the syntomic cycle class map $\cl^{\synt}_{i,r}$.

  For an open $X$ as above, the situation is, at the moment, reversed. We defined log-syntomic $p$-adic Chern classes \cite{New} using the (universal) syntomic cycle class maps constructed in this paper. 
\end{remark}

\appendix
\section{Comparison of crystalline, convergent, and rigid syntomic cohomologies}
We will compare  crystalline, convergent, and rigid syntomic cohomologies for smooth schemes over $\so_K$ with normal crossing compactifications. 
Let $X$ be a smooth scheme over $\so_K$. Recall Besser's definition of rigid syntomic cohomology \cite{Be1}
$$
\R\Gamma^{\rig}_{\synt}(X,r):=[\R\Gamma_{\rig}(X_0/F)\oplus F^r\R\Gamma_{\dr}(X_K)\lomapr{f}
\R\Gamma_{\rig}(X_0/F)\oplus \R\Gamma_{\rig}(X_0/K)],\quad r\geq 0.
$$
Here $\R\Gamma_{\rig}(\cdot)$ denotes the rigid cohomology and 
$f: (x,y)\mapsto ((p^r-\phi)(x), \spb(y)-x)$, where $\spb$  is the Berthelot's specialization map. 
\begin{proposition}
\label{a1}
Let  $X$ be a proper semistable scheme over $\so_K$ with a smooth special fiber. There is a natural quasi-isomorphism
$$
\R\Gamma^{\rig}_{\synt}(X_{\tr},r)\simeq 
\R\Gamma_{\synt}(X,r),\quad r\geq 0.
$$
\end{proposition}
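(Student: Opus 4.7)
The plan is to realize both complexes as the homotopy limit of a common three-term diagram. First note that a proper semistable $\so_K$-scheme with smooth special fiber is automatically smooth over $\so_K$ and has trivial log-structure on $X_K$, hence $X_{\tr}=X_K$ and all the cohomologies in sight are those of smooth proper schemes.

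By the Fontaine-Messing construction, $\R\Gamma_{\synt}(X,r)$ is the homotopy fiber
$$\R\Gamma_{\synt}(X,r)=\holim\bigl(\fil^r\R\Gamma_{\crr}(X/W(k))_{\Q}\xrightarrow{p^r-\phi}\R\Gamma_{\crr}(X/W(k))_{\Q}\bigr).$$
For smooth proper $X/\so_K$ one has a canonical $\phi$-equivariant quasi-isomorphism $\R\Gamma_{\crr}(X/W(k))_{\Q}\simeq \R\Gamma_{\rig}(X_0/F)$ (Berthelot smoothness plus the agreement of crystalline and rigid cohomology for smooth proper $k$-schemes), and Berthelot's specialization gives a quasi-isomorphism $\spb:\R\Gamma_{\dr}(X_K)\stackrel{\sim}{\to}\R\Gamma_{\rig}(X_0/K)$. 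The composition $\R\Gamma_{\crr}(X/W(k))_{\Q}\to\R\Gamma_{\dr}(X_K)\xrightarrow{\spb}\R\Gamma_{\rig}(X_0/K)$ is identified with the base change from $F$ to $K$.

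The key step is to identify the filtered piece: I claim the square
$$\begin{CD}
\fil^r\R\Gamma_{\crr}(X/W(k))_{\Q} @>>> \R\Gamma_{\rig}(X_0/F) \\
@VVV @VVV \\
F^r\R\Gamma_{\dr}(X_K) @>>> \R\Gamma_{\rig}(X_0/K)
\end{CD}$$
is homotopy cartesian, with the bottom map induced by $\spb$ and the right map being base change from $F$ to $K$. Equivalently, the induced map on cofibers $\R\Gamma_{\crr}(X/W(k))_{\Q}/\fil^r\to\R\Gamma_{\dr}(X_K)/F^r$ must be a quasi-isomorphism. This is the compatibility between the divided-power filtration in crystalline cohomology and the Hodge filtration in de Rham cohomology of a smooth proper $\so_K$-lift; it can be checked locally by embedding $X$ into a smooth $W(k)$-scheme and comparing the de Rham complex of the PD-envelope (whose divided-power filtration rationally collapses to the stupid filtration since $p$ is a unit) with the de Rham complex of $X/\so_K$.

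Granting this, $\fil^r\R\Gamma_{\crr}(X/W(k))_{\Q}$ is the homotopy pullback of the cospan $F^r\R\Gamma_{\dr}(X_K)\to\R\Gamma_{\rig}(X_0/K)\leftarrow\R\Gamma_{\rig}(X_0/F)$. Substituting into the fiber defining $\R\Gamma_{\synt}(X,r)$ and reshuffling the homotopy limit yields exactly Besser's three-term formula
$$\R\Gamma_{\synt}(X,r)\simeq\holim\Bigl(F^r\R\Gamma_{\dr}(X_K)\to \R\Gamma_{\rig}(X_0/K)\leftarrow\bigl[\R\Gamma_{\rig}(X_0/F)\xrightarrow{p^r-\phi}\R\Gamma_{\rig}(X_0/F)\bigr]\Bigr),$$
which is $\R\Gamma^{\rig}_{\synt}(X_{\tr},r)$. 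Naturality is inherited from naturality of Berthelot's comparison, of $\spb$, and of the Hodge filtration. The main obstacle is the cartesianness of the key square: while the underlying compatibility is classical, its rigorous derived-category form requires tracking the divided-power structure on $\fil^r$ compatibly with a chosen Frobenius lift, and then verifying that the resulting identification is independent of the chosen embedding.
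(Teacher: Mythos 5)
Your opening reduction is where the argument breaks down. You assert that a proper semistable $\so_K$-scheme with smooth special fiber has trivial log-structure on $X_K$, hence $X_{\tr}=X_K$, so that all cohomologies in sight are those of proper smooth schemes. But in this paper "semistable" (see the footnote attached to the definition) allows a non-trivial horizontal divisor $D_\infty$, and $X$ carries the log-structure associated to $D_\infty$. Thus $X_{\tr}=X\setminus D_\infty$ is a \emph{proper open} subscheme of $X$; in particular $X_{0,\tr}=X_0\setminus D_{\infty,0}$ is generally a non-proper smooth $k$-scheme. That is exactly the situation where rigid cohomology (of the open special fiber) and log-crystalline cohomology (of the compactified log-scheme) differ as cohomology theories, and the content of the proposition is to compare them. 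Your reduction to the proper case eliminates the case the statement is actually about; in the $D_\infty=\emptyset$ case the assertion is essentially Besser's Prop.~9.8, which the paper explicitly notes in the remark following the proof.

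Even setting this aside, two of your asserted quasi-isomorphisms need more care. First, you claim a $\phi$-equivariant quasi-isomorphism $\R\Gamma_{\crr}(X/W(k))_\Q\simeq\R\Gamma_{\rig}(X_0/F)$. Because $\R\Gamma_{\crr}(X/F)$ is computed from $X_1$ while rigid cohomology sees $X_0$, and absolute crystalline cohomology is \emph{not} invariant under the nilpotent immersion $X_0\hookrightarrow X_1$ when $e>1$, the comparison goes only through Frobenius eigenspaces: the paper obtains it by passing through log-convergent cohomology (which \emph{is} invariant under infinitesimal thickenings, by Berthelot--Ogus), combining Shiho's theorems with the fact (from [CN, Remark 5.9]) that $i^*:\R\Gamma_{\crr}(X_1/F)\to\R\Gamma_{\crr}(X_0/F)$ is a quasi-isomorphism after taking $\phi=p^{r}$-eigenspaces. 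This is precisely the chain of maps in the large commutative diagram of the paper's proof. Second, your ``key step'' that the filtered square is homotopy cartesian -- equivalently that $\R\Gamma_{\crr}(X/F)/\fil^r\to\R\Gamma_{\dr}(X_K)/F^r$ is a quasi-isomorphism -- is plausible for proper smooth $X/\so_K$, but it is asserted rather than proved and is again stated without the horizontal log-structure that the proposition requires; the paper sidesteps this by directly using the mapping-fiber presentation $\R\Gamma_{\synt}(X,r)\simeq[\R\Gamma_{\crr}(X/F)^{\phi=p^r}\to\R\Gamma_{\dr}(X_K)/F^r]$ and constructing the single comparison map into the rigid side. To repair the proposal you would need to (i) restore the horizontal log-structure and work with $X_{0,\tr}$, (ii) interpose convergent cohomology as in the paper's diagram, and (iii) invoke the $\phi$-eigenspace result for the $X_1\to X_0$ transition.
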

\begin{proof}As usual we consider $X$ as a log-scheme.
We can write 
$$
\R\Gamma_{\synt}^{\rig}(X_{\tr},r)\simeq [\R\Gamma_{\rig}(X_{0,\tr}/F)^{\phi=p^r}\to
\R\Gamma_{\rig}(X_{0,\tr})/F^r\R\Gamma_{\dr}(X_{K,\tr})]
$$
Since we have
$$
\R\Gamma_{\synt}(X,r)\simeq[\R\Gamma_{\crr}(X/F)^{\phi=p^r}\to \R\Gamma_{\dr}(X_K)/F^r],
$$
it suffices to construct a map
$$
\R\Gamma_{\crr}(X/F) \to \R\Gamma_{\rig}(X_{0,\tr}/F)
$$
that is compatible (in the dg category sense) with Frobenius and the specialization map from de Rham cohomology.  This is accomplished by the following commutative diagram.
 
 $$
 \xymatrix{&  \R\Gamma_{\crr}(X_{1}/F) \ar[dl]^{i^*}\ar[r] &  \R\Gamma_{\crr}(X_{1}/K) & \R\Gamma_{\dr}(X_K)\ar[dr]^{\sim} \ar[l]_{\sigma_{\crr}}^{\sim}
 \ar@/^17pt/[ddl]_-{\sigma_{\conv}}^{\sim}\\
 \R\Gamma_{\crr}(X_{0}/F) & \R\Gamma_{\conv}(X_{1}/F) \ar[d]^{\wr}_{i^*}\ar[r]\ar[u]_{\alpha_{1,F}} &  \R\Gamma_{\conv}(X_{1}/K)\ar[d]^{\wr}_{i^*} \ar[u]_{\alpha_{1,K}}& &\R\Gamma_{\dr}(X_{K,\tr})\ar@/^22pt/[ddll]^{\spb}\\
   & \R\Gamma_{\conv}(X_{0}/F)\ar[ul]^{\alpha_0}_{\sim} \ar[d]^{\wr}\ar[r] &  \R\Gamma_{\conv}(X_{0}/K)\ar[d]^{\wr} \\
   & \R\Gamma_{\rig}(X_{0,\tr}/F) \ar[r] &  \R\Gamma_{\rig}(X_{0,\tr}/K) 
 }
 $$ Here $ \R\Gamma_{\conv}(\cdot)$ denotes the (logarithmic) convergent cohomology \cite{Og, BOg, Sh1} that is used classically to connect rigid cohomology with crystalline cohomology. 
 The quasi-isomorphisms between the rigid and the convergent cohomology at the bottom of the diagram are proved in \cite[Cor. 2.4.13]{Sh1}. The maps $i^*$ are quasi-isomorphisms by invariance of convergent cohomology under nilpotent thickenings \cite[1.14.3]{BOg}. The map $\alpha_0$ is a quasi-isomorphism by \cite[Theorem 3.1.1]{Sh1}. The top map $i^*$ is a quasi-isomorphism on $\phi$-eigenspaces \cite[Remark 5.9]{CN}; hence so is the map $\alpha_{1,F}$. The quasi-isomorphisms $\sigma_{\crr}, \sigma_{\conv}$ are simply the crystalline and the convergent \cite[2.3]{Sh1} Poincar\'e Lemmas, respectively. It follows that the specialization map $\spb$ as well as the map $\alpha_{1,K}$ are quasi-isomorphisms as well.
 \end{proof}
 \begin{remark}
 Recall that Besser's definition of rigid syntomic cohomology is modeled on the definition of convergent syntomic cohomology \cite{N1}. In its logarithmic form the latter is defined as the following mapping fiber
 $$
 \R\Gamma_{\synt}^{\conv}(X,r):= [\R\Gamma_{\conv}(X_{0}/F)^{\phi=p^r}\to
\R\Gamma_{\conv}(X_{0}/K)/F^r\R\Gamma_{\conv}(X_0/K)]
 $$
 The proof of the above proposition shows that,  for a proper and semistable scheme over $\so_K$ with a smooth special fiber, we have natural quasi-isomorphisms
 \begin{equation}
 \label{kwak}
 \R\Gamma^{\rig}_{\synt}(X_{\tr},r)\simeq \R\Gamma^{\conv}_{\synt}(X,r)\simeq \R\Gamma_{\synt}(X,r),\quad r\geq 0.
 \end{equation}
 In the proper case this was shown in \cite[Prop. 9.8]{Be1}.
 \end{remark}
   For a variety $Y$ over $K$, let $\R\Gamma^{NN}_{\synt}(Y,r)$ denote the syntomic cohomology defined in \cite{NN}.
 \begin{corollary}Let  $X$ be a proper semistable scheme over $\so_K$ with a smooth special fiber. 
 There is a natural distinguished triangle
 $$
 \R\Gamma^{\rig}_{\synt}(X_{\tr},r) \oplus \R\Gamma(W\Omega^{r-1}_{X_0,\log})_{\Q}[-r]\stackrel{\sim}{\to} \R\Gamma^{NN}_{\synt}(X_{K,\tr},r).
 $$
 \end{corollary}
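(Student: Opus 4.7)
The plan is to chain together three quasi-isomorphisms, two of which are already available as results in the paper, leaving one identification which is the heart of the matter. The overall strategy is to pass from the rigid syntomic side to the Fontaine--Messing syntomic side, absorb the logarithmic de Rham--Witt summand by enlarging the log-structure at the special fiber, and then recognize the resulting Fontaine--Messing log-syntomic cohomology of the compactified model as the NN-syntomic cohomology of the generic open fiber.

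First, apply Proposition \ref{a1} to replace the left-hand term: one has a natural quasi-isomorphism
$$
\R\Gamma^{\rig}_{\synt}(X_{\tr},r)\stackrel{\sim}{\to}\R\Gamma(X,\sss(r))_{\Q},
$$
where on the right is the Fontaine--Messing rational syntomic cohomology of $X$ as a log-scheme over $\so_K^{\times}$ (that is, with only the horizontal log-structure coming from $D_{\infty}$, and no special fiber log-structure). Next, apply Corollary \ref{mot2} to $X$, which gives the splitting
$$
\R\Gamma(X,\sss(r))_{\Q}\oplus \R\Gamma(W\Omega^{r-1}_{X_0,\log})_{\Q}[-r]\stackrel{\sim}{\to}\R\Gamma(X^{\times},\sss(r))_{\Q}.
$$
Combining these two steps yields a natural quasi-isomorphism
$$
\R\Gamma^{\rig}_{\synt}(X_{\tr},r)\oplus \R\Gamma(W\Omega^{r-1}_{X_0,\log})_{\Q}[-r]\stackrel{\sim}{\to}\R\Gamma(X^{\times},\sss(r))_{\Q}.
$$

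The remaining and main step is to identify $\R\Gamma(X^{\times},\sss(r))_{\Q}$ with $\R\Gamma^{NN}_{\synt}(X_{K,\tr},r)$. As noted in the footnote in the introduction, the only formal difference between the rational syntomic cohomology of \cite{FM} (denoted $\sss$ here) and the one of \cite{NN} is the addition of the log-structure associated to the special fiber. Now $X^{\times}$ already carries both log-structures: the horizontal one from $D_{\infty}$ and the vertical one from $X_0$, and its locus of triviality is precisely $X_K\setminus D_{\infty,K}=X_{K,\tr}$. So the Fontaine--Messing log-syntomic cohomology of $X^{\times}$, as a proper log-smooth model of $X_{K,\tr}$ over $\so_K^0$, should be exactly the NN syntomic cohomology of $X_{K,\tr}$.

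The hard part will be justifying this last identification rigorously. The NN syntomic cohomology is defined by $h$-sheafifying (rational) Fontaine--Messing syntomic cohomology on varieties over $K$, so one needs to invoke from \cite{NN} the fact that, for a proper log-smooth semistable model with full log-structure at the special fiber and at a normal crossing horizontal divisor, the log-syntomic cohomology of the model agrees with the $h$-sheafified syntomic cohomology of its trivial locus. Concretely, this boils down to an $h$-descent statement together with the log-crystalline comparison, both of which are standard ingredients in \cite{NN}. Granting that identification, the three quasi-isomorphisms compose to give the stated splitting, and no new constants enter because everything is rational.
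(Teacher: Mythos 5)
Your proposal matches the paper's proof step for step: Proposition~\ref{a1} to pass from rigid syntomic to Fontaine--Messing syntomic of $X$, Corollary~\ref{mot2} to absorb the de Rham--Witt summand into the log-syntomic cohomology of $X^{\times}$, and then the identification $\R\Gamma_{\synt}(X^{\times},r)_{\Q}\simeq\R\Gamma^{NN}_{\synt}(X_{K,\tr},r)$, which the paper simply cites as \cite[Prop.~3.18]{NN} rather than re-deriving it from $h$-descent as you sketch. Correct and the same approach.
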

 \begin{proof}
 Since we have a canonical quasi-isomorphism \cite[Prop. 3.18]{NN}
 $$
 \R\Gamma_{\synt}(X^{\times},r)_{\Q}\stackrel{\sim}{\to}\R\Gamma^{NN}_{\synt}(X_{K,\tr},r),$$
 this follows immediately from Proposition \ref{a1} and Corollary \ref{mot2}. 
 \end{proof}

\end{document}